\documentclass[a4paper]{article}

\usepackage[utf8]{inputenc}
\usepackage[width=15cm,height=22cm]{geometry}
\usepackage{amsmath,amsfonts,amssymb,amsthm,mathrsfs}
\usepackage[hidelinks,pdfusetitle]{hyperref}
\usepackage[capitalise]{cleveref}
\usepackage[dvipsnames]{pstricks}
\usepackage{bbm}
\usepackage{graphicx}
\usepackage{stmaryrd} 
\usepackage{tikz}
\usepackage{enumitem}
\crefname{equation}{}{}

\newtheorem{proposition}{Proposition}[section]
\newtheorem{theorem}[proposition]{Theorem}

\newtheorem{definition}[proposition]{Definition}
\newtheorem{corollary}[proposition]{Corollary}
\newtheorem{lemma}[proposition]{Lemma}
\newtheorem{remark}[proposition]{Remark}
\numberwithin{equation}{section}

\newcommand{\dd}{\,\mathrm{d} }
\newcommand{\ad}{\mathrm{ad}}
\newcommand{\D}{\mathrm{d}}

\newcommand{\sgn}{\mathrm{sgn}}
\newcommand{\Lucas}[2]{\begin{bmatrix} #1 \\ #2 \end{bmatrix}}

\begin{document}

\begin{center}
    \uppercase{\textbf{Integer quadratic obstructions to the local controllability of the Burgers equation}}

    \vspace{\baselineskip}

    \large{\textsc{Thomas Perrin\footnote{\textit{Université de Rennes, ENS Rennes, INRIA, CNRS, IRMAR - UMR 6625, F-35000 Rennes, France.}}}}
\end{center}

\begin{abstract}
    We consider the Burgers equation with a scalar control acting through a fixed spatial profile, in a regime where the linearized system is not controllable. We identify natural conditions on the control profile leading to quadratic obstructions to small-time local null-controllability, quantified by negative Sobolev norms of the control of arbitrary integer order. We show that these conditions are related to iterated Lie brackets. The proof relies on repeated integrations by parts in the quadratic expansion combined with nonlinear remainder estimates. Finally, we construct smooth spatial profiles that produce obstructions at every negative integer order.
\end{abstract}

\begingroup
\renewcommand{\thefootnote}{}%
\footnotetext{%
    \textit{Keywords:} small-time local controllability, Burgers equation, quadratic obstructions to controllability, scalar-input system.
    
    \hspace{0.1cm} \textit{MSC2020:} 93B05, 93C20, 35K58, 35K10, 35Q93
}%
\endgroup

\setcounter{tocdepth}{1}
\tableofcontents

\hspace{2cm}

\section{Introduction}

In this article, we study the local null-controllability of the viscous Burgers equation with scalar control
\begin{equation}\label{eq:Burgers_intro}
    \left\{
    \begin{array}{lll}
        \partial_t y - \partial_x^2 y + y \partial_x y = u(t) \mu(x)
        & \quad t \in (0, T), & \quad x \in (0,1), \\
        y(t,x) = 0
        & \quad t \in (0, T),  & \quad x \in \{0, 1\},\\
        y(0,x)=y_0(x)
        & & \quad x \in (0, 1),
    \end{array}
    \right.
\end{equation}
where $T>0$, $\mu$ is a fixed function (which we sometimes call the \emph{source profile}), $u$ is the control, $y_0 \in L^2(0,1)$ is the initial state, and $y$ is the unknown. Once a source profile $\mu$ is fixed, we write $t \mapsto y(t; y_0, u)$ for the solution of the Burgers equation \eqref{eq:Burgers_intro} with initial data $y_0$ and control $u$. 

In the companion paper \cite{BurgersFractional}, we prove that obstructions to local null-controllability quantified by the $H^{r}$-norm of the control can occur, with $r$ varying across an interval. The purpose of the present article is to show that obstructions may also be quantified by negative Sobolev norms of any integer order. An accessible version of our main result is the following theorem.

\begin{theorem}
    For every integer $N \geq 2$ and every prescribed Fourier direction, one can construct a smooth control profile $\mu$ such that this direction is lost at the linear level, all lower-order quadratic drifts vanish, and a nonzero quadratic drift appears at order $N$. This drift yields an obstruction to small-time local null-controllability with controls small in $H^{2N-3}(0,T)$, quantified by the $H^{-N}$-norm of the control.
\end{theorem}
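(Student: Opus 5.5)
We follow the standard quadratic-obstruction strategy (Coron, Beauchard--Marbach) adapted to the Burgers equation with a scalar control. Fix the lost direction $\varphi_K(x)=\sqrt2\sin(K\pi x)$, with eigenvalue $\lambda_K=(K\pi)^2$ of the Dirichlet Laplacian.

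\textbf{Step 1 (expansion and kernel).} We require $\langle \mu,\varphi_K\rangle=0$. Then the linear solution $y_1$, defined by $\partial_t y_1-\partial_x^2 y_1=u\mu$ with $y_1(0)=0$, has no component along $\varphi_K$, so $\varphi_K$ is lost at the linear level. The second-order term $y_2$ solves $\partial_t y_2-\partial_x^2 y_2=-y_1\partial_x y_1$. Projecting onto $\varphi_K$ and writing $y_1(t)=\int_0^t u(s)e^{(t-s)\Delta}\mu\,ds$ gives
\begin{equation*}
\langle y_2(T),\varphi_K\rangle=\int_0^T\!\!\int_0^t\!\!\int_0^t u(s)u(s')\,H(T-t,t-s,t-s')\,ds\,ds'\,dt ,
\end{equation*}
where $H(\tau,\sigma,\sigma')=-\tfrac12 e^{-\lambda_K\tau}\langle \partial_x(e^{\sigma\Delta}\mu\, e^{\sigma'\Delta}\mu),\varphi_K\rangle$. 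We set $u_j$ for the $j$-th primitive of $u$ vanishing at $0$. Integrating by parts $N$ times in $s$ and in $s'$ moves derivatives onto the kernel. The boundary terms at $s=t$ produce coefficients of the form $\langle\varphi_K, \partial_x(\Delta^a\mu\,\Delta^b\mu)\rangle$, i.e.\ the components along $\varphi_K$ of the iterated brackets $[\ad^a_{f_0}f_1,\ad^b_{f_0}f_1]$, with $f_0=\Delta$ and $f_1=\mu$. The resulting expansion is
\begin{equation*}
\langle y_2(T),\varphi_K\rangle=\sum_{j<N} A_j\int_0^T u_j(t)^2 e^{-\lambda_K(T-t)}\,dt+A_N\int_0^T u_N^2 e^{-\lambda_K(T-t)}\,dt+\mathcal R ,
\end{equation*}
where $\mathcal R$ collects cross terms and kernel remainders.

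\textbf{Step 2 (conditions on $\mu$ and construction).} We impose $A_j=0$ for $j<N$ and $A_N\neq0$. The odd cross terms $u_ju_{j+1}$ reduce to boundary squares, which is why they can be absorbed. These conditions form finitely many bilinear constraints in the Fourier coefficients $\mu_k$. For a fixed $K$, $\langle\varphi_K,\partial_x(\varphi_k\varphi_l)\rangle$ is nonzero only for $k\pm l=\pm K$ (up to parity). We therefore take $\mu$ to be a finite combination $\sum_i c_i\varphi_{k_i}$ with $k_i\neq K$. The constraints become a polynomial system in $(c_i)$ whose coefficients are weighted sums of products of eigenvalues of the form $\lambda_k^a\lambda_l^b$, and we solve it by choosing enough modes. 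A Vandermonde-type argument on the distinct quantities $\lambda_k+\lambda_l$ should give solutions with $A_N\neq0$. Smoothness is automatic since the sum is finite; if one wants $\mu$ localized, rapidly decaying coefficients can be used instead.

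\textbf{Step 3 (coercivity and remainders).} Since $u_N$ vanishes at $0$ with its derivatives, $\int_0^T u_N^2e^{-\lambda_K(T-t)}\,dt\gtrsim \|u_N\|_{L^2}^2\approx\|u\|_{H^{-N}}^2$ for small $T$. The quadratic remainder $\mathcal R$ is bounded by $o(1)\|u_N\|_{L^2}^2$ as $T\to0$, using kernel smoothness together with interpolation. The cubic remainder $y-y_1-y_2$ is handled by the well-posedness estimates for Burgers, which give a bound of order $\|u_N\|_{L^2}^{2}\cdot\|u\|_{H^{2N-3}}^{\theta}$ after interpolating $\|y_1\|$ in suitable norms between $H^{-N}$ and $H^{2N-3}$. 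Finally, comparing with the projection of the free evolution from $y_0=\epsilon\,\sgn(A_N)\varphi_K$ forbids reaching $0$, because the drift has a fixed sign.

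\textbf{Main obstacle.} The hardest step is the cubic remainder estimate: closing it exactly with the smallness assumption in $H^{2N-3}$ requires a sharp interpolation of $\|y_1\|_{L^4_tL^\infty_x}$-type quantities against $\|u_N\|_{L^2}$. The Burgers nonlinearity loses a derivative, so for this estimate we would first try energy estimates in $H^1_0$ for $y-y_1$.
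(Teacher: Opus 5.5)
Your overall architecture matches the paper: quadratic expansion, repeated integrations by parts, interpolation for the cubic term, and initial datum $\eta\,\sgn(A_N)\varphi_K$. However, there are two genuine gaps.

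\textbf{Boundary terms at $t=T$.} The main gap is in Step 3, in the claim that $\mathcal R$ is $o(1)\Vert u_N\Vert_{L^2}^2$. The integrations by parts leave boundary terms at $t=T$; these are the ``boundary squares'' you mention in Step 2. They form a quadratic form in $u_1(T),\dots,u_N(T)$ and in the quantities $\int_0^T u_N(s)\,\partial_t^i\partial_s^N H(T,s)\,\mathrm{d}s$, where $H$ is the two-variable kernel. They are not controlled by $\Vert u_N\Vert_{L^2}$, even for controls small in $H^{2N-3}$.

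For $N=2$, one of them is $\tfrac12 u_2(T)^2\langle \mu\mu'',\varphi_K'\rangle$, whose coefficient is in general nonzero when $A_1=0$. Take $u_2=\epsilon\chi((T-t)/\delta)$ with $\chi\in C_c^\infty([0,1))$ and $\chi\equiv 1$ near $0$. Then $\langle y_2(T),\varphi_K\rangle=\tfrac12\epsilon^2\langle\mu\mu'',\varphi_K'\rangle+O(\epsilon^2\delta)$, while $\Vert u_2\Vert_{L^2}^2\sim\epsilon^2\delta$. Moreover $\Vert u\Vert_{H^1}$ is as small as you like once $\epsilon\ll\delta^{5/2}$. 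So no unconditional coercivity of the kind you state can hold.

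The missing idea is the closed-loop estimate. Start from $\langle y_1(T),\varphi_j\rangle=\langle\mu,\varphi_j\rangle\big(\sum_{n=1}^N(-\lambda_j)^{n-1}u_n(T)+(-\lambda_j)^N\int_0^Te^{-\lambda_j(T-t)}u_N\,\mathrm{d}t\big)$. Take it at $N$ indices with $\langle\mu,\varphi_j\rangle\neq0$ and invert the Vandermonde matrix in $(-\lambda_j)$. This gives $\sum_n|u_n(T)|\lesssim\sqrt T\Vert u_N\Vert_{L^2}+\Vert y_1(T)\Vert_{H^{-1}}$. One then expresses $y_1(T)$ through the final state, the quadratic remainder $R_2$, and the decoupling remainder.

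This is why the paper's drift estimate carries $C\Vert y(T;y_0,u)-y(T;y_0,0)\Vert_{H^{-1}}^2$ on the right. It is also why the obstruction reads $\Vert y(T)\Vert_{H^{-1}}+\Vert y(T)\Vert_{H^{-1}}^2\geq c(\eta+\Vert u_N\Vert_{L^2}^2)$: one genuinely uses that the target is $0$. Relatedly, ``comparing with the free evolution'' skips the interaction between $y_0$ and $u$. It must be estimated so that $\eta$ can absorb it; the decoupling lemma gives $\lesssim T^{1/4}\Vert y_0\Vert_{L^2}\Vert u_1\Vert_{L^2}$.

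\textbf{Construction of $\mu$.} Step 2 is not established. You never compute the coefficients $A_n$, so nothing excludes that $A_N$ vanishes wherever $A_1=\dots=A_{N-1}=0$ within your family of $\mu$. ``A Vandermonde-type argument should give solutions'' is a hope, not a proof. Also, the relevant quantities are $\lambda_p^a\lambda_q^b+\lambda_q^a\lambda_p^b$, which are not functions of $\lambda_p+\lambda_q$ alone.

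The paper makes the bilinear system linear by pairing modes. It takes $\mu=\sum_{j=1}^N(\nu_j\varphi_{Kn_j}+\varphi_{Kn_j+K})$ with $n_j\in 3\mathbb{N}^\ast$ and $n_1>1$. Then only the products $\nu_j\cdot 1$ contribute, and $a_K^n(\mu)=\sum_j\nu_jQ_n(n_j)$. The explicit Lucas-triangle formula for $a_K^n$ comes from the diagonal identity $\tfrac12(\partial_s^n\partial_t^{n-1}-\partial_s^{n-1}\partial_t^n)H(t,t)=a_K^ne^{-\lambda_K(T-t)}$. It shows that $Q_n$ has exact degree $4n-4$, with leading coefficient $(-1)^{n-1}(K\pi)^{4n-3}/\sqrt2$. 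Linear independence of $Q_1,\dots,Q_N$ then yields integers $n_j$ for which $\mathcal M=(Q_n(n_j))$ is invertible, and one sets $\nu=\mathcal M^{-1}(0,\dots,0,1)^\intercal$.

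\textbf{Smaller points.} With $f_0=\Delta$, every bracket $[\ad^a_{f_0}f_1,\ad^b_{f_0}f_1]$ vanishes because these fields are constant. The nonlinear drift $f_0(y)=y''-yy'$ is needed. Finally, the cubic remainder is not the real obstacle. Forced-Burgers estimates in $L^2(H^1)$ give $|\langle R_3(T),\varphi_K\rangle|\lesssim\Vert u_1\Vert_{L^2}^3$. Gagliardo--Nirenberg then gives $\Vert u_1\Vert_{L^2}^3\lesssim(\Vert u\Vert_{H^{2N-3}}+T^{3-2N}\Vert u\Vert_{L^2})\Vert u_N\Vert_{L^2}^2$, which is essentially your interpolation.
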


\subsection{Functional setting}

We denote by $\langle \cdot, \cdot \rangle$ the $L^2(0, 1)$-scalar product. Write $Ay := - \frac{\mathrm{d}^2}{\mathrm{d} x^2} y$ for the Dirichlet Laplacian on $(0,1)$, of domain $D(A) = H^2 \cap H_0^1 (0,1) \subset L^2((0,1); \mathbb{R})$. For $j \in \mathbb{N}^\ast$, write $\lambda_j = ( j\pi )^2$ for the $j$-th eigenvalue of $A$, and $\varphi_j(x) = \sqrt{2} \sin(j \pi x)$ for the corresponding $L^2(0,1)$-orthonormal eigenfunctions. For $n \in \mathbb{N}^\ast$, set $H_{(0)}^n(0,1) = D(A^{\frac{n}{2}})$, with 
\begin{equation}
    \Vert \mu \Vert_{H_{(0)}^n}^2 := \sum_{j \geq 1} \lambda_j^n \langle \mu, \varphi_j \rangle^2.    
\end{equation}
Recall that for $n \in \mathbb{N}$ and $\mu \in H_{(0)}^{2n + 1}(0,1)$, one has $\mu^{(2m)}(0) = \mu^{(2m)}(1) = 0$ for all $m \in \llbracket 0, n \rrbracket$. Finally, set $D(A^\infty) = \bigcap_{n \geq 0} D(A^{\frac{n}{2}})$.

For $u \in L^2(0, T)$ and $\mu \in H^{-1}(0,1)$, the solution of \eqref{eq:Burgers_intro} is well-defined (see \cite{BurgersFractional}), and satisfies 
\begin{equation}
    y \in L^2((0,T);H_0^1(0,1)) \ \cap \ C^0([0,T];L^2(0,1)) \ \cap \ H^1((0,T);H^{-1}(0,1)) .  
\end{equation}
In this article, $\mu$ is assumed to be more regular, and other well-posedness results may also be used (see, for instance, \cite{ShririkyanBurgers,MarbachTimeIteration}). 

For $n \geq 0$ and $T>0$, the space $H^n(0,T)$ denotes the usual Sobolev spaces of real-valued functions. Negative Sobolev regularity of the control will be measured through its iterated primitives, according to the following convention. The quantity $\Vert u_n \Vert_{L^2(0,T)}$ plays the role of the $H^{-n}(0,T)$-norm of $u$ (see, for instance, \cite{BeauchardMarbach2018}).

\begin{definition}[Iterated primitives of the control]\label{def:iterated_primitive}
    Let $T > 0$ and $u \in L^2(0,T)$. For $t \in [0, T]$, we write $u_1(t) := \int_0^t u(\tau) \dd \tau$, and we define recursively $u_{n+1}(t) := \int_0^t u_n(\tau) \dd \tau$ for $n \geq 1$.
\end{definition}

\subsection{Power series expansion}

Assume that $y_0 = 0$. We consider the power series expansion $y = y_1 + y_2 + \cdots$, where $y_1$ and $y_2$ solve, respectively, the linearized and quadratic control problem 
\begin{equation}\label{eq:Burgers_y_1_intro}
    \left\{
    \begin{array}{lll}
        \partial_t y_1 - \partial_x^2 y_1 = u(t) \mu
        & \quad t \in (0, T), & \quad x \in (0, 1), \\
        y_1(t,x) = 0
        & \quad t \in (0, T),  & \quad x \in \{0, 1\},\\
        y_1(0,x)=0
        & & \quad x \in (0, 1),
    \end{array}
    \right.
\end{equation}
and
\begin{equation}\label{eq:Burgers_y_2_intro}
    \left\{
    \begin{array}{lll}
        \partial_t y_2 - \partial_x^2 y_2 + y_1 \partial_x y_1 = 0
        & \quad t \in (0, T), & \quad x \in (0, 1), \\
        y_2(t,x) = 0
        & \quad t \in (0, T), & \quad x \in \{0, 1\},\\
        y_2(0,x)=0
        & & \quad x \in (0, 1).
    \end{array}
    \right.
\end{equation}
Once a source profile $\mu$ is fixed, we sometimes write $t \mapsto y_1(t; u)$ and $t \mapsto y_2(t; u)$ for the solutions of \eqref{eq:Burgers_y_1_intro} and \eqref{eq:Burgers_y_2_intro} with control $u$. Note that $y_1$ and $y_2$ are given by 
\begin{equation}\label{eq:y_1_Fourier_intro}
    y_1(t,x) = \sum_{j \geq 1} \left( \int_0^t e^{-\lambda_j (t-\tau)} u(\tau) \dd \tau \right) \left\langle \mu, \varphi_j \right\rangle \varphi_j(x)
\end{equation}
and
\begin{equation}\label{eq:y_2_Fourier_intro}
    y_2(t,x) = \frac{1}{2} \sum_{j \geq 1} \left( \int_0^t e^{-\lambda_j (t-\tau)} \left\langle (y_1(\tau))^2, \varphi_j^\prime \right\rangle \dd \tau \right) \varphi_j(x).
\end{equation}

If $\langle \mu, \varphi_k \rangle = 0$ for some $k \geq 1$, then $\langle y_1(t; u), \varphi_k \rangle = 0$ for any control $u$. In this case, we say that the direction $\varphi_k$ is \textit{lost at the linear level}, and one has
\begin{equation}
    \langle y(t; 0, u), \varphi_k \rangle = \langle y_2(t; u), \varphi_k \rangle + \text{ remainder terms,}
\end{equation}
so that the behavior of the controlled system in the direction $\varphi_k$ is governed by the quadratic expansion. 

\subsection{Controllability notions}

\begin{definition}\label{def:intro_STLC}
    Let $\sigma \in \mathbb{N}$. We say that \eqref{eq:Burgers_intro} is \emph{small-time locally null-controllable} (in short, \emph{STLC}), with controls small in $H^{\sigma}$, if the following property holds: for any $T, \eta > 0$, there exists $\delta > 0$ such that, for any $y_0 \in L^2(0,1)$ satisfying $\Vert y_0 \Vert_{L^2} \leq \delta$, there exists a control $u \in H^\sigma(0,T)$ satisfying $\Vert u \Vert_{H^{\sigma}} \leq \eta$, such that the corresponding solution satisfies $y(T; y_0, u) = 0$.
\end{definition}

Let $n, \sigma \in \mathbb{N}$, and assume that a direction  $\varphi_k$ is lost at the linear level. We say that there is an \emph{obstruction to STLC with controls small in $H^{\sigma}$, quantified by the $H^{-n}$-norm of the control}, if STLC with controls small in $H^{\sigma}$ is denied by a systematic small-time drift estimate involving the $H^{-n}$-norm of the control, that is, an estimate of the form
\begin{equation}
    \left\langle y(T;y_0,u), \varphi_k \right\rangle = \left\langle y(T; y_0, 0), \varphi_k \right\rangle + \alpha_k \left\Vert u_n \right\Vert_{L^2(0,T)}^2 + \text{ remainder terms},
\end{equation}
for some $\alpha_k \neq 0$, whenever $T$, $\left\Vert u \right\Vert_{H^\sigma(0,T)}$, and $y_0$ are sufficiently small. 

\begin{remark}
    If $\langle \mu, \varphi_n \rangle \neq 0$ for every $n \geq 1$, and if the sequence $\left( \langle \mu, \varphi_n \rangle \right)_n$ does not decay too fast, then one can prove that STLC holds with controls small in $L^2(0,T)$; see \cite{LiuTakahashiTucsnak2013,MarbachTimeIteration}. In that setting, STLC follows from the exact controllability of the linearized system. By contrast, throughout this article, we assume that a direction is lost at the linear level in order to investigate the resulting quadratic effects.
\end{remark}
 
\subsection{Detailed results}\label{sec:integer_drift_any_order}

Write $\Lucas{n}{p}$ for the $(n,p)$-coefficient of Lucas's triangle:
\begin{equation}
    \begin{array}{lcccccccc}
        1 \\
        1 & 2 \\
        1 & 3 & 2 \\
        1 & 4 & 5 & 2 \\
        1 & 5 & 9 & 7 & 2 \\
        1 & 6 & 14 & 16 & 9 & 2 \\
        1 & 7 & 20 & 30 & 25 & 11 & 2 
    \end{array}
\end{equation}
which is a variant of Pascal's triangle with the number $2$ on the diagonal. More precisely, we set $\Lucas{n}{0} = 1$ for all $n \geq 0$, and 
\begin{equation}
    \Lucas{n}{p} = \binom{n}{p} + \binom{n - 1}{p - 1}
\end{equation}
for $n \geq 1$ and $p \in \llbracket 1, n \rrbracket$. One has the induction formula
\begin{equation}
    \Lucas{n}{p} + \Lucas{n}{p+1} = \Lucas{n+1}{p+1}
\end{equation}
for all $n \geq 1$ and $p \in \llbracket 0, n - 1 \rrbracket$. These coefficients are studied, for instance, in \cite{Robbins2005}. Note that the coefficients of Lucas's triangle should not be confused with Lucas numbers (generally denoted $L_n$ in the literature), which are defined by a variant of the Fibonacci sequence. 

For $k \geq 1$ and $\mu \in H_{(0)}^{4(N-1)}(0,1)$, with $N \geq 1$, we define
\begin{equation}\label{eq:def:a_k_n}
    a_k^n(\mu) = - \frac{1}{2} \sum_{\alpha = 0}^{n - 1} \sum_{\beta = 0}^{n-1-\alpha} (-1)^{n+\beta} \Lucas{n - 1 - \beta}{\alpha} \lambda_k^{n - 1 - \alpha - \beta} \left\langle \mu^{(2\beta)} \mu^{(2(n -1 + \alpha))} , \varphi_k^\prime \right\rangle,
\end{equation}
for all $n \in \llbracket 1, N\rrbracket$. For instance, $a_k^1(\mu) = \frac{1}{2} \left\langle \mu^2 , \varphi_k^\prime \right\rangle$. Our main result is the following theorem. 

\begin{theorem}\label{thm:drift_any_order}
    Let $N \geq 2$, $k \geq 1$, and $\mu \in H_{(0)}^{4N}(0,1)$ such that $\langle \mu, \varphi_k \rangle = 0$. Assume that
    \begin{equation}
        a_k^1(\mu) = \cdots = a_k^{N-1}(\mu) = 0 \quad \text{ and } \quad a_k^N(\mu) \neq 0.
    \end{equation}
    Then there exist $C, T_0, \eta_0 > 0$ such that for all $T \in (0, T_0)$, all $u \in H^{2N-3}(0,T)$ with $\Vert u \Vert_{L^2} \leq 1$, and all $y_0 \in L^2(0,1)$ with $\Vert y_0 \Vert_{L^2} \leq \eta_0$, one has
    \begin{equation}\label{eq:thm:drift_any_order}
        \begin{split}
            & \left\vert \left\langle y(T; y_0, u) - y(T; y_0, 0), \varphi_k \right\rangle - a_k^N(\mu) \Vert u_N \Vert_{L^2}^2 \right\vert \\
            \leq \ & C \Gamma_N(T,u) \Vert u_N \Vert_{L^2}^2 + C \Vert y(T; y_0, u) - y(T; y_0, 0) \Vert_{H^{-1}}^2 + C T^{\frac{1}{4}} \Vert y_0 \Vert_{L^2} \Vert u_1 \Vert_{L^2},
        \end{split}
    \end{equation}
    where $\Gamma_N(T,u) := T + \left\Vert u \right\Vert_{H^{2N-3}} + T^{3-2N} \Vert u \Vert_{L^2}$.
\end{theorem}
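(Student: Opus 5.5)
We follow the standard quadratic-obstruction strategy of Beauchard--Marbach. The proof has three parts: an exact coercive formula for $\langle y_2(T;u),\varphi_k\rangle$, a comparison between the full solution and its quadratic expansion, and a treatment of the initial data.

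\textbf{Step 1: the quadratic kernel.} Since $\langle\mu,\varphi_k\rangle=0$, formulas \eqref{eq:y_1_Fourier_intro}--\eqref{eq:y_2_Fourier_intro} give
\begin{equation*}
\langle y_2(T;u),\varphi_k\rangle=\int_0^T\!\!\int_0^T K_k(t,s)\,u(t)u(s)\dd t\dd s,
\end{equation*}
where
\begin{equation*}
K_k(t,s)=\frac12\int_{\max(t,s)}^T e^{-\lambda_k(T-\tau)}\sum_{i,j}e^{-\lambda_i(\tau-t)-\lambda_j(\tau-s)}\mu_i\mu_j\langle\varphi_i\varphi_j,\varphi_k'\rangle\dd\tau .
\end{equation*}
Rather than working on the Fourier side, we integrate by parts in time. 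Write $y_1=\sum_{p} (-1)^p\,u_{p+1}(t)A^p\mu\,(\ldots)$ by repeatedly integrating the Duhamel formula. Concretely,
\begin{equation*}
y_1(t)=u_1(t)\mu-A\!\int_0^t e^{-(t-\tau)A}u_1(\tau)\mu\dd\tau ,
\end{equation*}
and iterating this up to order $2N-1$ expresses $y_1$ as a finite sum $\sum_{p}(-1)^{p}u_{p+1}(t)A^{p}\mu$ plus a remainder involving $u_{M}$ and $A^{M}\mu$. Here $\mu\in H^{4N}_{(0)}$ guarantees that $A^p\mu\in D(A^{\cdot})$, so the boundary terms vanish.

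We then insert this expansion into $\langle y_1^2,e^{-\lambda_k(T-\tau)}\varphi_k'\rangle$, integrate in $\tau$, and integrate by parts again in $\tau$ to move derivatives between the products $u_{p}u_{q}$. The aim is to reduce every term to the form $\int u_n(\tau)^2 e^{-\lambda_k(T-\tau)}\dd\tau$ times a coefficient of the form $\langle\mu^{(2\beta)}\mu^{(2\gamma)},\varphi_k'\rangle$ weighted by powers of $\lambda_k$, or to cross terms $u_pu_q$ with $p\ne q$, which are further integrated by parts as $u_pu_q=(u_pu_{q+1})'-u_{p-1}u_{q+1}$. The combinatorics of this reduction produce the Lucas coefficients $\Lucas{n-1-\beta}{\alpha}$ through the recursion $\Lucas{n}{p}+\Lucas{n}{p+1}=\Lucas{n+1}{p+1}$. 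The outcome should be
\begin{equation*}
\langle y_2(T),\varphi_k\rangle=\sum_{n=1}^N a_k^n(\mu)\int_0^T u_n^2e^{-\lambda_k(T-\tau)}\dd\tau+\text{boundary terms at }T+\mathcal R .
\end{equation*}
Under the hypothesis $a_k^n(\mu)=0$ for $n<N$, only the $n=N$ term remains. Replacing $e^{-\lambda_k(T-\tau)}$ by $1$ costs $O(T)\|u_N\|_{L^2}^2$.

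\textbf{Step 2: bounding the non-principal terms.} The boundary terms at $t=T$ involve $u_p(T)$ and can be bounded via $y_1(T)$. Following Beauchard--Marbach, the quantity $\|y_1(T)\|_{H^{-1}}^2$ controls them, and it is in turn controlled by $\|y(T;y_0,u)-y(T;y_0,0)\|_{H^{-1}}^2$ plus higher-order errors. The residual integrals $\mathcal R$ involve $u_pu_q$ with $p<N$. These are estimated by Gagliardo--Nirenberg interpolation between $\|u_N\|_{L^2}$ and $\|u\|_{H^{2N-3}}$: the interpolation inequality gives $\|u_p\|_{L^2}^2\lesssim \|u_N\|_{L^2}\|u_{2p-N}\|$-type bounds, yielding $\|u\|_{H^{2N-3}}\|u_N\|^2$. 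The term $T^{3-2N}\|u\|_{L^2}$ comes from the Poincaré-type inequality $\|u_p\|\le T^{N-p}\|u_N\|$-reverse estimates used when $u_j(T)\ne0$.

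\textbf{Step 3: nonlinear remainder and initial data.} We estimate the cubic remainder $y-y_0\text{-flow}-y_1-y_2$ by $\|u_1\|_{L^2}^3$-type quantities, using well-posedness in $H^{-1}$ with controls measured in weak norms. This bound is absorbed into $\Gamma_N\|u_N\|^2$ after interpolation. The interaction between $y_0$ and $u$ is handled by linearizing around the free solution $y(\cdot;y_0,0)$, which produces the cross term $T^{1/4}\|y_0\|\|u_1\|$.

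I expect the main obstacle to be Step 1: carrying out the integration-by-parts bookkeeping exactly enough to identify the coefficient $a_k^N$ with the Lucas triangle, while checking that every term left over is genuinely of lower weight and is covered by $\Gamma_N$.
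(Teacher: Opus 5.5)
Your overall architecture matches the paper's: a coercive estimate for $y_2$, closed-loop control of the endpoint terms, cubic and decoupling remainders, and Gagliardo--Nirenberg at the end. However, Step 2 contains two genuine gaps.

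\textbf{First gap: the closed-loop bound needs enough nonzero modes.} Bounding $u_1(T),\dots,u_N(T)$ by $\Vert y_1(T)\Vert_{H^{-1}}$ ``following Beauchard--Marbach'' rests on the identity $\langle y_1(T),\varphi_j\rangle=\langle\mu,\varphi_j\rangle\bigl(\sum_{n=1}^N(-\lambda_j)^{n-1}u_n(T)+(-\lambda_j)^N\int_0^Te^{-\lambda_j(T-t)}u_N(t)\dd t\bigr)$ and on inverting a Vandermonde system. It therefore requires at least $N$ indices $j$ with $\langle\mu,\varphi_j\rangle\neq0$. This is not a hypothesis of the theorem, and without it the estimate is false. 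With fewer than $N$ active modes, take $u_N$ concentrated near $t=T$ with suitably prescribed derivatives at $T$. This makes $\Vert y_1(T)\Vert_{H^{-1}}$ and $\Vert u_N\Vert_{L^2}$ arbitrarily small while $(u_n(T))_n$ stays of unit size.

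The missing step is Lemma \ref{lem:closed_loop_nonzero_moments}: the conditions $a_k^1=\dots=a_k^{N-1}=0\neq a_k^N$ force $\mu$ to have at least $N$ nonzero Fourier coefficients. The paper proves this in three moves:
\begin{itemize}
\item it uses the identity $a_k^N(\mu)=-\frac12\langle[M_{N-1},M_N](0),\varphi_k\rangle$, with $M_n=\ad_{f_0}^n(f_1)$ (Proposition \ref{prop:Lie_brackets});
\item a Jacobi-identity induction shows $\langle[M_\alpha,M_\beta](0),\varphi_k\rangle=0$ for $\alpha+\beta\leq2N-2$;
\item if $\mu$ lives on at most $N-1$ modes, there is a nontrivial linear relation among $M_n(0)=(-1)^n\mu^{(2n)}$, $0\leq n\leq N-1$, and this yields a contradiction.
\end{itemize}

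\textbf{Second gap: your treatment of $\mathcal R$ cannot work.} Integrals $\int u_pu_q$ with $p<N$ are quadratic in $u$, while $\Vert u\Vert_{H^{2N-3}}\Vert u_N\Vert_{L^2}^2$ and $T^{3-2N}\Vert u\Vert_{L^2}\Vert u_N\Vert_{L^2}^2$ are cubic. Replacing $u$ by $\varepsilon u$ therefore rules out any such bound. The remaining quadratic term $T\Vert u_N\Vert_{L^2}^2$ does not dominate either, e.g.\ $\Vert u_1\Vert_{L^2}^2$ for oscillating $u$. Moreover, $\Vert u_p\Vert_{L^2}\lesssim T^{N-p}\Vert u_N\Vert_{L^2}$ is false; only $\Vert u_N\Vert_{L^2}\lesssim T^{N-p}\Vert u_p\Vert_{L^2}$ holds.

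Every quadratic contribution has to be reduced exactly to three kinds of terms: $\sum_{n\leq N}a_k^n(\mu)\int_0^Tu_n^2e^{-\lambda_k(T-t)}\dd t$, endpoint terms, and $O(T\Vert u_N\Vert_{L^2}^2)$. That is Proposition \ref{prop:quad_estimates_IPP}, whose error involves no $\Gamma_N$. The paper obtains it by integrating by parts symmetrically in $(t,s)$ in $\int_0^Tu(t)\int_0^tu(s)H(t,s)\dd s\dd t$. The only interior remainder is then $\int_0^Tu_N(t)\int_0^tu_N(s)\partial_t^N\partial_s^NH\dd s\dd t=O(\Vert u_N\Vert_{L^1}^2)$, and the coefficients come from the diagonal identity in Lemma \ref{lem:proprietes_H}.

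Your one-sided expansion of $y_1$ has two further problems. It leaves cross terms between $u_1,u_2,\dots$ and the truncation remainder $\int_0^tu_M(s)A^Me^{-(t-s)A}\mu\dd s$, which you never handle. And the identification of the coefficients with $a_k^n(\mu)$ is only asserted.

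Gagliardo--Nirenberg is needed only for the cubic term $\Vert u_1\Vert_{L^2}^3=\Vert u_N^{(N-1)}\Vert_{L^2}^3$. The factor $T^{3-2N}\Vert u\Vert_{L^2}$ in $\Gamma_N$ is the lower-order term of that inequality on an interval of length $T$, combined with $\Vert u_N\Vert_{L^2}\lesssim T^N\Vert u\Vert_{L^2}$.
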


The case $N = 1$ is studied in \cite{BurgersFractional} under weaker regularity assumptions on $\mu$, and is therefore omitted here for simplicity. We do not attempt to derive optimal regularity assumptions on $\mu$; since the arguments are rather technical, we instead impose stronger regularity on $\mu$ in order to keep the exposition as clear as possible. For results relating the precise regularity of $\mu$ to obstructions to controllability, we refer to \cite{BurgersFractional}.

The following result establishes the existence of $\mu$ satisfying the assumptions of Theorem \ref{thm:drift_any_order}. 

\begin{proposition}\label{prop:existence_mu}
    For $k \geq 1$ and $N \geq 2$, there exists $\mu \in D(A^\infty)$ such that $\langle \mu, \varphi_k \rangle = 0$ and
    \begin{equation}
        a_k^1(\mu) = \cdots = a_k^{N-1}(\mu) = 0 \quad \text{ and } \quad a_k^N(\mu) \neq 0.
    \end{equation}
\end{proposition}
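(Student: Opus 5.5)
Each $a_k^n$ is a quadratic form in $\mu$, so I plan to work inside a finite-dimensional space of trigonometric polynomials. Take $\mu = \sum_{j \in J} c_j \varphi_j$ with $J \subset \mathbb{N}^\ast \setminus \{k\}$ finite. Then $\mu \in D(A^\infty)$ and $\langle \mu, \varphi_k\rangle = 0$ automatically. Since $\varphi_j^{(2\beta)} = (-\lambda_j)^\beta \varphi_j$, every term $\langle \mu^{(2\beta)}\mu^{(2(n-1+\alpha))}, \varphi_k'\rangle$ expands as
\begin{equation}
\sum_{i,j\in J} c_i c_j (-\lambda_i)^{\beta}(-\lambda_j)^{n-1+\alpha} \langle \varphi_i\varphi_j, \varphi_k'\rangle .
\end{equation}
The triple integral $I_{ijk} := \langle \varphi_i\varphi_j,\varphi_k'\rangle$ is explicit. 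It vanishes unless $i \pm j \pm k$ satisfies a parity condition, and it is nonzero, for instance, when $i+j+k$ is odd. Symmetrizing in $(i,j)$, I get
\begin{equation}
a_k^n(\mu) = \sum_{i,j} c_i c_j\, P_n(\lambda_i,\lambda_j)\, I_{ijk},
\end{equation}
where $P_n$ is an explicit symmetric polynomial of degree $2(n-1)$ built from the Lucas coefficients.

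The simplest ansatz uses only two modes, $J=\{i,j\}$ with $I_{ijk}\neq 0$ and $I_{iik}=I_{jjk}=0$. The diagonal terms $I_{iik}$ vanish when $k$ is odd, since $\varphi_i^2$ is symmetric about $1/2$ while $\varphi_k'$ is then antisymmetric. That parity argument does not settle even $k$, so two modes are not enough in general. The cleaner plan is to use $N$ or more modes and linear algebra. Fix a reference mode $m$, and take $\mu = \varphi_m + \sum_{\ell=1}^{N} c_\ell \varphi_{j_\ell}$ with small coefficients $c_\ell$. To first order,
\begin{equation}
a_k^n(\mu) = a_k^n(\varphi_m) + 2\sum_\ell c_\ell\, P_n(\lambda_m,\lambda_{j_\ell}) I_{m j_\ell k} + O(|c|^2).
\end{equation}
The only extra input is that $P_n$ vanishes or not at the chosen frequencies, and this is just a finite computation.

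An alternative is to pass to polynomial interpolation. Choose modes $j_\ell$ with $I_{mj_\ell k}\neq0$, for example $j_\ell = k+m+2\ell$-type choices that keep the right parity. The key point is that the vectors $\bigl(P_n(\lambda_m,\lambda_{j_\ell})\bigr)_{n=1}^N$, for $\ell=1,\dots,N$, are linearly independent. I expect this because $P_n(\lambda_m,\cdot)$ should have exact degree $n-1$ in the second variable, with a leading coefficient involving the diagonal Lucas entry $2$. Given this, the matrix is a triangular transform of a Vandermonde matrix in the distinct values $\lambda_{j_\ell}$, so it is invertible. An implicit function theorem in $c$ then lets me solve $a_k^1=\dots=a_k^{N-1}=0$ while prescribing $a_k^N = \varepsilon \ne 0$ for small $\varepsilon$.

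If I avoid the perturbative framing, I can instead use a scaling argument. The map $\mu\mapsto (a_k^n(\mu))_{n\le N}$ is a quadratic map from $\mathbb{R}^{|J|}$ to $\mathbb{R}^N$. Surjectivity of its differential at some $\mu_0$, which is exactly the invertibility above, gives local surjectivity near $(a_k^n(\mu_0))_n$. So I can hit any nearby target. To reach the target $(0,\dots,0,\varepsilon)$ itself, I first choose $\mu_0$ with $a_k^n(\mu_0)$ small, for instance by taking $m$ large relative to the others after rescaling. This step needs care.

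The main obstacle is the degree claim for $P_n$ and the nondegeneracy of the leading coefficient. This requires simplifying the double sum in \eqref{eq:def:a_k_n} with the Lucas recursion. My first attempt would be to compute a generating function in $n$ and check that $P_n(x,y)$ factors as something like $(x-y)(\dots)$ or $(y-\lambda_k)^{n-1}$-type forms. If $P_n$ turns out to be essentially $(\lambda_j - \lambda_i - \lambda_k)^{n-1}$ up to sign, which is plausible from iterated Lie bracket structure, then the whole problem reduces to a Vandermonde system in the quantities $\lambda_{j_\ell}-\lambda_m-\lambda_k$. In that case I would choose modes making these distinct, with exactly one weighted sum nonvanishing at level $N$.
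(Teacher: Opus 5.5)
There is a genuine gap, and it starts with the selection rule for the triple integral. Write $\varphi_i\varphi_j=\cos((i-j)\pi x)-\cos((i+j)\pi x)$ and $\varphi_k'=\sqrt2\,k\pi\cos(k\pi x)$, and use $\int_0^1\cos(p\pi x)\cos(k\pi x)\,\mathrm{d}x=\frac12\mathbbm{1}_{p=k}$ for integers $p\ge0$, $k\ge1$. Then
\[
I_{ijk}=\langle\varphi_i\varphi_j,\varphi_k'\rangle=\frac{k\pi}{\sqrt2}\left(\mathbbm{1}_{|i-j|=k}-\mathbbm{1}_{i+j=k}\right).
\]
So $I_{ijk}$ is not governed by a parity condition: it vanishes unless $|i-j|=k$ or $i+j=k$. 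In particular, $I_{iik}=0$ unless $k=2i$, whatever the parity of $k$. Your suggested modes $j_\ell=k+m+2\ell$ give $I_{mj_\ell k}=0$ for every $\ell\ge1$, so the linear term in your expansion vanishes identically in those directions.

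More fundamentally, a single reference mode $\varphi_m$ is coupled to at most two other modes, $\varphi_{m+k}$ and (if $m\neq k$) $\varphi_{|m-k|}$. The differential at $\varphi_m$ of the quadratic map $\mu\mapsto(a_k^n(\mu))_{1\le n\le N}$ is $h\mapsto\bigl(2B_n(\varphi_m,h)\bigr)_n$, with $B_n$ the associated bilinear forms, so it has rank at most $2$. Hence no implicit-function or local-surjectivity argument around $\varphi_m$ can impose $N\ge3$ independent conditions, whatever the degree structure of $P_n$. Incidentally, $P_n(\lambda_m,\cdot)$ has degree $2n-2$, not $n-1$, in the second variable; the degrees are still distinct, but that is not the bottleneck. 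Your scaling variant needs some $\mu_0$ with small $a_k^n(\mu_0)$ and a surjective differential there. It is left unspecified, and that is exactly where the real construction has to happen.

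The missing idea is to use $N$ independent reference modes, each coupled to exactly one partner, with no cross-coupling. The paper takes
\[
\mu=\sum_{j=1}^N\bigl(\nu_j\varphi_{kn_j}+\varphi_{kn_j+k}\bigr),\qquad n_1<\dots<n_N \text{ in } 3\mathbb{N}^\ast.
\]
All modes exceed $k$, so $\langle\mu,\varphi_k\rangle=0$ and no two indices sum to $k$. Distinct pairs are more than $k$ apart, so the only nonzero couplings are $I_{kn_j,\,kn_j+k,\,k}$. Because the partner coefficients are frozen to $1$, each term $\langle\mu^{(2a)}\mu^{(2b)},\varphi_k'\rangle$ is \emph{linear} in $\nu$. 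This gives $a_k^n(\mu)=\sum_j\nu_jQ_n(n_j)$, where $Q_n$ is an explicit polynomial of exact degree $4n-4$. Its leading coefficient $(-1)^{n-1}(k\pi)^{4n-3}/\sqrt2$ comes from the diagonal Lucas entries $\Lucas{\alpha}{\alpha}$. Linear independence of $Q_1,\dots,Q_N$, together with the finiteness of the roots of a nonzero polynomial, yields $n_j$ for which $(Q_n(n_j))_{n,j}$ is invertible. One then solves a linear system for $\nu$ to get $a_k^1=\dots=a_k^{N-1}=0$ and $a_k^N=1$, with no smallness or implicit function theorem needed.
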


Finally, the drift estimate \eqref{eq:thm:drift_any_order} implies the following obstruction to controllability.

\begin{corollary}\label{cor:obstruction_small_time}
    Assume that the hypotheses of Theorem \ref{thm:drift_any_order} are satisfied. Then the Burgers equation \eqref{eq:Burgers_intro} is not STLC with controls small in $H^{2N-3}$. More precisely, there exist $c > 0$, $T_1 > 0$, and $\eta_1 > 0$, such that for all $T \in (0, T_1]$, there exists $\delta_1 > 0$ such that for all $u \in H^{2N-3}(0,T)$ satisfying $\Vert u \Vert_{H^{2N-3}(0,T)} \leq \delta_1$, and all $\eta \in (0, \eta_1)$, one has 
    \begin{equation}\label{eq:cor:obstruction_small_time}
        \left\Vert y\!\left(T; \eta \widetilde{y_0}, u \right) \right\Vert_{H^{-1}(0,1)} + \left\Vert y\!\left(T; \eta \widetilde{y_0}, u \right) \right\Vert_{H^{-1}(0,1)}^2 \geq c \left( \eta + \Vert u_N \Vert_{L^2}^2 \right) > 0,
    \end{equation}
    where $\widetilde{y_0} := \sgn \left( a_k^N(\mu) \right) \varphi_k$.
\end{corollary}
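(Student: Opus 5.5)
We derive Corollary \ref{cor:obstruction_small_time} directly from the drift estimate \eqref{eq:thm:drift_any_order} of Theorem \ref{thm:drift_any_order}, applied with $y_0 = \eta \widetilde{y_0}$. Write $s := \sgn(a_k^N(\mu))$ and $D := y(T;\eta\widetilde{y_0},u) - y(T;\eta\widetilde{y_0},0)$.

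\textbf{Free evolution.} The free solution $y(T;\eta \widetilde{y_0},0)$ starts from $\eta s\varphi_k$. For small $T$, comparing with the heat semigroup and estimating the quadratic term $y\partial_x y$ via standard energy estimates gives
\[
\langle y(T;\eta\widetilde{y_0},0),\varphi_k\rangle = s\eta e^{-\lambda_k T} + O(\eta^2 T^{1/2}),
\]
with the error of size $C\eta^2\sqrt{T}$. For $T \le T_1$ and $\eta<\eta_1$ small, this yields $s\langle y(T;\eta\widetilde{y_0},0),\varphi_k\rangle \ge \eta/2$.

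\textbf{Sign alignment.} Theorem \ref{thm:drift_any_order} gives
\[
s\langle D,\varphi_k\rangle \ge |a_k^N|\,\Vert u_N\Vert_{L^2}^2 - C\Gamma_N(T,u)\Vert u_N\Vert_{L^2}^2 - C\Vert D\Vert_{H^{-1}}^2 - CT^{1/4}\eta\Vert u_1\Vert_{L^2}.
\]
The cross term is the delicate one: $\Vert u_1\Vert_{L^2}$ is not controlled by $\Vert u_N\Vert_{L^2}$. We bound it by $\Vert u_1\Vert_{L^2}\le T\Vert u\Vert_{L^2}\le T\delta_1$, so the cross term is at most $CT^{5/4}\delta_1\eta \le \eta/8$ once $\delta_1$ is small.

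Next we absorb $C\Gamma_N\Vert u_N\Vert_{L^2}^2$. Choose $T_1$ small so that $CT \le |a_k^N|/6$. Then, for fixed $T$, choose $\delta_1$ small so that $C\Vert u\Vert_{H^{2N-3}} + CT^{3-2N}\Vert u\Vert_{L^2} \le |a_k^N|/6$. This is precisely why $\delta_1$ must depend on $T$: the factor $T^{3-2N}$ blows up as $T\to0$. We also need $\Vert u\Vert_{L^2}\le1$, which holds since $\delta_1\le 1$.

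\textbf{Combining.} Summing the two contributions,
\[
s\langle y(T;\eta\widetilde{y_0},u),\varphi_k\rangle \ge \tfrac{3\eta}{8} + \tfrac{2|a_k^N|}{3}\Vert u_N\Vert_{L^2}^2 - C\Vert D\Vert_{H^{-1}}^2.
\]
The left side is bounded by $|\langle y(T),\varphi_k\rangle| \le \lambda_k^{1/2}\Vert y(T)\Vert_{H^{-1}}$. For the remainder, $\Vert D\Vert_{H^{-1}}^2 \le 2\Vert y(T;\eta\widetilde{y_0},u)\Vert_{H^{-1}}^2 + 2\Vert y(T;\eta\widetilde{y_0},0)\Vert_{H^{-1}}^2$. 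The last term is $O(\eta^2) \le \eta/8$ for $\eta_1$ small, using $L^2$ contraction of the free solution.

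Rearranging gives $C'(\Vert y\Vert_{H^{-1}} + \Vert y\Vert_{H^{-1}}^2) \ge \eta/4 + c\Vert u_N\Vert^2$, which is \eqref{eq:cor:obstruction_small_time}. The positivity follows from $\eta>0$.

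\textbf{Obstacle.} The main difficulty is the ordering of constants. $T_1$ and $\eta_1$ must be uniform in $T$, while only $\delta_1$ may depend on $T$. The free-evolution estimate must be uniform in $T\le T_1$ and quadratic in $\eta$. Non-STLC then follows: for any $T$, targeting $0$ with $\Vert u\Vert_{H^{2N-3}}\le\delta_1$ is impossible from $\eta\widetilde{y_0}$ with arbitrarily small $\eta$.
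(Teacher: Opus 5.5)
Your proof is correct and follows essentially the same route as the paper: apply the drift estimate of Theorem~\ref{thm:drift_any_order} with $y_0=\eta\widetilde{y_0}$, choose $T_1$ to absorb $CT$ and then $\delta_1=\delta_1(T)$ to absorb the $T^{3-2N}\Vert u\Vert_{L^2}$ part of $\Gamma_N$, and absorb the remaining error terms into $\eta$. The differences are minor. The paper handles the free evolution through Lemma~\ref{lem:free_evolution_Burgers}, whose error $CT^{1/2}\eta$ is only linear in $\eta$ and is absorbed by the smallness of $T_1$, so an estimate quadratic in $\eta$ is not actually required. The paper also absorbs the cross term by using $\Vert u_1\Vert_{L^2}\le 1$ together with $CT_1^{1/4}\le \tfrac14$, rather than using the smallness of $\delta_1$ as you do.
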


\subsection{Proof strategy and comparison with previous results}

\subsubsection{Obstructions to controllability and Lie brackets}

Consider the general finite-dimensional scalar-input control system
\begin{equation}
    \dot{x}(t) = f_0(x(t)) + u(t) f_1(x(t)),
\end{equation}
where $f_0$ and $f_1$ are smooth vector fields on $\mathbb{R}^n$, the control $u$ is scalar-valued, that is, $u(t) \in \mathbb{R}$, and the state satisfies $x(t) \in \mathbb{R}^n$. We assume that $x=0$ is an equilibrium of the uncontrolled system, namely, $f_0(0)=0$, and study the small-time local controllability (STLC) of the system near $0$. When $f_0$ and $f_1$ are analytic, STLC is encoded in the iterated Lie brackets of $f_0$ and $f_1$ evaluated at zero; see \cite{Krener1973}. Several explicit Lie-bracket conditions are known to prevent STLC. For example, the classical work \cite{Sussmann1983} shows that the condition $\left[ f_1, \left[ f_1, f_0 \right] \right](0) \notin S_1$ gives rise to a quadratic obstruction to controllability, where $S_1$ denotes the controllable subspace of the linearized system at $0$. More generally, quadratic obstructions to controllability are classified in \cite{BeauchardMarbach2018,KBFM24Unified}; they are related to the $H^{-n}$ norms of the control, for $1 \leq n \leq \dim S_1$, and to iterated Lie brackets of length $2n+1$.

Quadratic obstructions of integer order have also been identified for several partial differential equations. Examples include bilinear Schrödinger equations with Dirichlet boundary conditions \cite{BeauchardMorancey2014,Bournissou2023_Quad,Coron2006} and with Neumann boundary conditions \cite[Appendix A]{BeauchardMarbachPerrin}, multi-input Schrödinger equations \cite{GherdaouiQuadObstructions26}, a parabolic model \cite[Section 3]{BeauchardMarbach2020}, a fluid mechanics model \cite{CoronKoenigNguyen2024}, and a Korteweg--de Vries system \cite{NiuXiang2025}. In the present article, we show that such quadratic obstructions can occur for the Burgers equation, at any negative integer order. We also prove in Proposition \ref{prop:Lie_brackets} that the assumptions of Theorem \ref{thm:drift_any_order}, and in particular the coefficients $(a_k^n(\mu))$, can be expressed in terms of Lie brackets if $\mu$ is sufficiently regular.

Two other quadratic phenomena have been identified for partial differential equations. First, obstructions may be quantified by fractional Sobolev norms of the control, notably for the Burgers equation in \cite{Marbach2018,Nguyen2025Burgers,BurgersFractional}, for Korteweg--de Vries systems in \cite{CoronKoenigNguyen2022,Nguyen2025}, and for a parabolic system in \cite{BeauchardMarbach2020}. Second, controllability lost at the linearized level may be restored at the quadratic level, as shown in \cite{BeauchardMarbachPerrin} for a bilinear Schrödinger equation with Neumann boundary conditions and in \cite{BeauchardMarbach2020} for a parabolic system. For a broader discussion of local controllability and Lie brackets, we refer to \cite{Beauchard2026ICM}.

\subsubsection{Proof strategy}

The classical approach to deriving these integer-order obstructions consists in performing integrations by parts in the quadratic system $y_2$ so as to produce primitives of the control. This strategy is used, in particular, in \cite[Appendix A]{BeauchardMarbachPerrin}, \cite[Lemma 3.1]{BeauchardMorancey2014}, \cite[Proposition 5.1]{Bournissou2023_Quad}, \cite[Proposition 3.3]{BeauchardMarbach2020}, and \cite[Lemma 3.8]{CoronKoenigNguyen2024}. This argument is used in Proposition \ref{prop:quad_estimates_IPP} below to establish a coercive estimate for the quadratic system. As in \cite{BeauchardMarbachPerrin,Bournissou2023_Quad,BeauchardMarbach2020}, we perform an arbitrary number of integrations by parts. However, the computations leading to the precise definition of the coefficients $(a_k^n(\mu))$ are considerably more involved than those in the cited works; see Lemma \ref{lem:proprietes_H} below. Likewise, establishing the connection between the coefficients $(a_k^n(\mu))$ and iterated Lie brackets requires substantial additional calculations, which are not needed in the cited works. Compare the proof of Proposition \ref{prop:Lie_brackets} with \cite[Lemma A.2]{BeauchardMarbachPerrin}, \cite[Proposition A.2]{Bournissou2023_Quad}, and \cite[Remark 1.7]{BeauchardMarbach2020}.

Once the coercive estimate for the quadratic system has been established, the proof of our main results relies on three types of nonlinear remainder estimates. First, the so-called \emph{quadratic and cubic remainder estimates} quantify the discrepancy between the controlled solution $y(t;0,u)$ and its linearized and quadratic approximations. Second, we estimate the difference between the full solution $y(t;y_0,u)$ and the sum of the free evolution $y(t;y_0,0)$ and the controlled solution with zero initial data $y(t;0,u)$; we refer to these as \emph{decoupling estimates}. Third, \emph{closed-loop estimates} are used to bound some remainder terms in the quadratic coercive estimate by quantities involving the controlled solution. These estimates can only be applied when $\langle \mu, \varphi_j \rangle \neq 0$ for sufficiently many indices $j$. We therefore prove in Lemma \ref{lem:closed_loop_nonzero_moments} that this property always holds under the assumptions of our main result; our approach differs from those used in earlier works, as it adapts the finite-dimensional argument of \cite{KBFM24Unified} to the setting of the Burgers equation.

Finally, for the Schrödinger equation, the existence of a function $\mu$ giving rise to an obstruction quantified by the $H^{-n}$ norm of the control was established in \cite[Proposition A.2]{Bournissou2023_Quad} using oscillatory profiles and Brouwer's fixed-point theorem. Our approach is shorter and more explicit; see Proposition \ref{prop:existence_mu}.

\subsubsection{Controllability of the Burgers equation}

The controlled Burgers equation \eqref{eq:Burgers_intro} with $\mu \equiv 1$ provided the first example of an obstruction to STLC quantified by a fractional norm of the control; see \cite{Marbach2018}. This specific system was recently revisited in \cite{Nguyen2025Burgers}, where the fractional obstruction to STLC was strengthened to an obstruction to \emph{finite-time} controllability in some directions. In the companion paper \cite{BurgersFractional}, the author develops a general framework encompassing these results and establishes new fractional obstruction results.

More broadly, the controllability of the Burgers equation has been studied extensively under various choices of controls, boundary conditions, and notions of controllability. Since the present paper focuses specifically on quadratic obstruction mechanisms for scalar source controls, we do not provide an overview of this broader literature, and we refer to the bibliographical discussions in \cite{Marbach2018,Nguyen2025Burgers}.

\subsection{Organization of the paper}

In Section \ref{sec:properties_quad}, we study the quadratic expansion $y_2$ and prove that it satisfies a coercive estimate. In Section \ref{sec:lie_brackets}, we relate our main result to iterated Lie brackets and use this connection to prove that $\mu$ has sufficiently many non-vanishing Fourier coefficients. In Section  \ref{sec:remainder_estimates}, we derive the nonlinear remainder estimates needed for the proof of our main results. Section \ref{sec:proof_obstructions} is devoted to the proof of the drift estimate and the resulting obstruction to controllability. In Section \ref{sec:proof_existence_mu}, we construct profiles $\mu$ satisfying the assumptions of our main results. Finally, Appendix \ref{sec:appendix_heat_burgers} collects several regularity results for the heat and Burgers equations.
 
\section{Properties of the quadratic expansion}\label{sec:properties_quad}

One has 
\begin{equation}
    \left\langle y_2(T), \varphi_k \right\rangle = \frac{1}{2} \int_0^T e^{-\lambda_k (T-\tau)} \left\langle (y_1(\tau))^2, \varphi_k^\prime \right\rangle \dd \tau,
\end{equation}
and $y_1(\tau) = \int_0^\tau u(t) \Phi (\tau - t) \dd t$, where $\Phi(t) := S(t) \mu$, with $S$ the heat semigroup. In particular, this gives
\begin{equation}\label{eq:forme_quad}
    \left\langle y_2(T), \varphi_k \right\rangle = \int_0^T u(t) \int_0^t u(s) H(t, s) \dd s \dd t,
\end{equation}
with
\begin{equation}
    H(t, s) = \int_t^T e^{-\lambda_k (T-\tau)} \left\langle \Phi (\tau - t) \Phi (\tau - s) , \varphi_k^\prime \right\rangle \dd \tau,
\end{equation}
for $(t,s) \in \Delta_+ := \left\{ (t,s) \in [0, T]^2, s \leq t \right\}$.

\subsection{Properties of the quadratic kernel}

We gather some properties of $H$ in the following lemma.

\begin{lemma}\label{lem:proprietes_H}
    Let $N \geq 1$. If $\mu \in H_{(0)}^{4N}(0,1)$, then for all $(n, m) \in \llbracket 0, N \rrbracket^2$, the derivative $\partial_s^n \partial_t^m H$ is well-defined and belongs to $C^0(\Delta_+; \mathbb{R})$. If $T \in (0,1)$, then
    \begin{equation}\label{eq:lem:propriete_H_2}
        \sum_{0 \leq i, j \leq N} \left\Vert \partial_t^i \partial_s^j H \right\Vert_{L^\infty(\Delta_+)} \leq C,
    \end{equation}
    where $C > 0$ is a constant which depends only on $\mu$, $N$ and $k$. In addition, for all $t \in [0,T]$ and $n \in \llbracket 1, N \rrbracket$, one has
    \begin{equation}\label{eq:lem:formula_H_with_a_k_n}
        \frac{1}{2} \left( \partial_s^{n} \partial_t^{n-1} H - \partial_s^{n-1} \partial_t^{n} H \right) (t,t) = a_k^n(\mu) e^{-\lambda_k(T-t)}.
    \end{equation}
\end{lemma}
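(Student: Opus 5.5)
The plan is to change variables so that the dependence of $H$ on $(t,s)$ goes through a two-parameter kernel whose derivatives are transparent. Setting $\sigma = \tau - t$, one has $H(t,s) = G(T-t, t-s)$ with
\begin{equation*}
    G(r,d) := \int_0^r e^{-\lambda_k (r-\sigma)} \left\langle \Phi(\sigma) \Phi(\sigma + d), \varphi_k^\prime \right\rangle \dd \sigma, \qquad 0 \leq d, \ 0 \leq r, \ r + d \leq T.
\end{equation*}
By the chain rule, $\partial_s = -\partial_d$ and $\partial_t = -\partial_r + \partial_d$. The two derivative rules are as follows.
\begin{itemize}
    \item The $d$-derivative falls only on $\Phi(\sigma+d)$. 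Since $\Phi^{(j)}(\sigma) = S(\sigma) \mu^{(2j)}$ for $\mu \in H^{4N}_{(0)}$, using the boundary compatibility conditions recalled in the functional setting, this gives $\partial_d^j G = \int_0^r e^{-\lambda_k(r-\sigma)} \langle \Phi(\sigma) S(\sigma+d)\mu^{(2j)}, \varphi_k^\prime\rangle \dd\sigma$.
    \item The $r$-derivative obeys $\partial_r F = -\lambda_k F + (\text{integrand at } \sigma = r)$ for any integral of this form.
\end{itemize}
Iterating, every $\partial_t^i \partial_s^j H$ with $i,j \leq N$ is a finite linear combination of two kinds of terms, with coefficients that are powers of $\lambda_k$:
\begin{itemize}
    \item integrals $\int_0^r e^{-\lambda_k(r-\sigma)} \langle S(\sigma)\mu^{(2a)} \, S(\sigma+d)\mu^{(2b)}, \varphi_k^\prime\rangle \dd\sigma$;
    \item boundary terms $\langle S(r)\mu^{(2a)} \, S(r+d)\mu^{(2b)}, \varphi_k^\prime\rangle$.
\end{itemize}
In both, $a + b \leq 2N$, so $\mu^{(2a)}, \mu^{(2b)} \in L^2$. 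The map $\sigma \mapsto S(\sigma) \mu^{(2a)}$ is continuous into $L^2$ with norm at most $\|\mu^{(2a)}\|_{L^2}$, and $\varphi_k^\prime \in L^\infty$. This gives continuity on $\Delta_+$ and the bound \eqref{eq:lem:propriete_H_2}, uniformly for $T \in (0,1)$, with a constant depending only on $\mu$, $N$ and $k$.

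For \eqref{eq:lem:formula_H_with_a_k_n}, I write the left-hand side as $\frac12 \partial_s^{n-1}\partial_t^{n-1}(\partial_s - \partial_t) H$, with $\partial_s - \partial_t = \partial_r - 2\partial_d$, and evaluate at $d = 0$. The integral terms produced at $d=0$ have integrands $F_{a,b}(\sigma) = \langle \Phi^{(a)}(\sigma)\Phi^{(b)}(\sigma), \varphi_k^\prime\rangle$. The aim is to group them into the form
\begin{equation*}
    \int_0^r e^{-\lambda_k(r-\sigma)} \left(\tfrac{\mathrm{d}}{\mathrm{d}\sigma} + \lambda_k\right) F(\sigma) \dd\sigma = F(r) - e^{-\lambda_k r} F(0),
\end{equation*}
using $\frac{\mathrm{d}}{\mathrm{d}\sigma} F_{a,b} = F_{a+1,b} + F_{a,b+1}$. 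Then:
\begin{itemize}
    \item the contributions $F(r)$ should cancel exactly against the boundary terms at $\sigma = r$;
    \item the contributions $e^{-\lambda_k r} F(0) = e^{-\lambda_k (T-t)} \langle \mu^{(2a)}\mu^{(2b)}, \varphi_k^\prime\rangle$ survive and produce the right-hand side.
\end{itemize}
As a sanity check, for $n=1$ one gets $\frac12(\partial_s - \partial_t)H(t,t) = \frac12(\partial_r - 2\partial_d)G(r,0)$. Since $\langle 2\Phi\Phi^\prime, \varphi_k^\prime\rangle = \frac{\mathrm{d}}{\mathrm{d}\sigma}\langle\Phi^2, \varphi_k^\prime\rangle$, this equals $\frac12\big[-\lambda_k G - \int_0^r e^{-\lambda_k(r-\sigma)} \frac{\mathrm{d}}{\mathrm{d}\sigma}\langle \Phi^2, \varphi_k^\prime \rangle \dd \sigma + \langle \Phi(r)^2,\varphi_k^\prime\rangle\big]$. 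One integration by parts reduces this to $\frac12 e^{-\lambda_k r}\langle\mu^2,\varphi_k^\prime\rangle = a_k^1(\mu) e^{-\lambda_k(T-t)}$, as required.

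For general $n$ I would argue by induction on $n$. I would track the coefficient of $\lambda_k^{p}\langle \mu^{(2\beta)}\mu^{(2\gamma)}, \varphi_k^\prime\rangle$ as $n \to n+1$, i.e.\ after applying one $\partial_s$ and one $\partial_t$. Each application of $\partial_r$ either multiplies by $-\lambda_k$ or creates a boundary term, and each $\partial_d$ raises $\gamma$ by one. The resulting recursion should be exactly the Lucas-triangle identity $\Lucas{n}{p} + \Lucas{n}{p+1} = \Lucas{n+1}{p+1}$, together with $\Lucas{n}{0} = 1$ and the diagonal value $2$, which comes from the factor $2$ in $\partial_r - 2\partial_d$.

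The main obstacle is this combinatorial bookkeeping. It requires verifying that all boundary terms at $\sigma = r$ cancel, and that the surviving coefficients match \eqref{eq:def:a_k_n} with the signs $(-1)^{n+\beta}$ and index ranges $\beta \leq n-1-\alpha$, $\gamma = n-1+\alpha$. If the direct induction becomes unwieldy, I would first compute $\partial_s^{j}\partial_t^{i} H(t,t)$ in closed form as sums over $\sigma = 0$ contributions plus one residual integral. I would then check that the residual integrals of the two terms in \eqref{eq:lem:formula_H_with_a_k_n} coincide, so that only the explicit $\sigma = 0$ terms remain to be matched with $a_k^n(\mu)$.
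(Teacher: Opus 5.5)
Your treatment of the first two claims is fine. The parametrization $H(t,s)=G(T-t,t-s)$, with $\partial_s=-\partial_d$ and $\partial_t=-\partial_r+\partial_d$, shows that every $\partial_t^i\partial_s^jH$ is a finite combination, with coefficients polynomial in $\lambda_k$, of integrals and $\sigma=r$ boundary terms built from $S(\cdot)\mu^{(2a)}$ with $a\le 2N$. This gives continuity on $\Delta_+$ and the bound \eqref{eq:lem:propriete_H_2}, just as the paper's Step~2 does, and your $n=1$ computation is correct.

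The gap is the identity \eqref{eq:lem:formula_H_with_a_k_n} for $n\ge 2$. This is the real content of the lemma, and you only announce it (``the resulting recursion should be exactly the Lucas-triangle identity''). Moreover, the induction on $n$ you describe does not close as stated. The level-$n$ identity holds only on the diagonal $s=t$ (i.e.\ $d=0$), so you cannot reach level $n+1$ by applying one more $\partial_s$ and one more $\partial_t$ to it. You would need the off-diagonal function $(\partial_s^{n}\partial_t^{n-1}-\partial_s^{n-1}\partial_t^{n})H(t,s)$, whose integral terms do not cancel for $s<t$.

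Your $(r,d)$ variables add a further obstacle. Differentiation there produces boundary terms only at $\sigma=r$. So the quantities $\lambda_k^p\langle\mu^{(2\beta)}\mu^{(2\gamma)},\varphi_k^\prime\rangle$ whose coefficients you want to track appear only after the final $\sigma$-integration by parts. You also need that, at $d=0$, the integrand is exactly $-(\frac{\mathrm d}{\mathrm d\sigma}+\lambda_k)$ applied to the total boundary term, so that all $F(r)$ contributions cancel. That is itself the nontrivial combinatorial statement to be proved, and you assert it (``should cancel exactly'') without showing it.

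What is missing is an explicit closed form valid off the diagonal. The paper keeps the upper limit $\tau=T$ fixed, so each $\partial_t$ hits the lower limit $\tau=t$ and directly produces terms with $\Phi^{(\beta)}(0)=\mu^{(2\beta)}$. It then proves by induction on $m$ (Pascal's rule plus a splitting of the triangular index set) that
\begin{equation*}
    \begin{split}
        \partial_s^n\partial_t^m H(t,s) = \ & (-1)^{m+n}\int_t^T e^{-\lambda_k(T-\tau)}\left\langle \Phi^{(m)}(\tau-t)\Phi^{(n)}(\tau-s),\varphi_k^\prime\right\rangle \dd\tau \\
        & - e^{-\lambda_k(T-t)}\sum_{\alpha=0}^{m-1}\sum_{\beta=0}^{m-1-\alpha}(-1)^{n+\beta}\binom{m-1-\beta}{\alpha}\lambda_k^{m-1-\alpha-\beta}\left\langle \Phi^{(\beta)}(0)\Phi^{(n+\alpha)}(t-s),\varphi_k^\prime\right\rangle .
    \end{split}
\end{equation*}
At $s=t$, the residual integrals for $(n,m)=(n,n-1)$ and $(n-1,n)$ coincide because the product is symmetric. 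After the shift $\alpha\mapsto\alpha+1$ in the first double sum, the two finite sums combine through $\binom{n-1-\beta}{\alpha}+\binom{n-2-\beta}{\alpha-1}=\Lucas{n-1-\beta}{\alpha}$. The column $\alpha=0$ comes only from $\partial_s^{n-1}\partial_t^{n}H$. The result is exactly $a_k^n(\mu)e^{-\lambda_k(T-t)}$, with the sign $(-1)^{n+\beta}$.

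This is essentially your fallback plan, but it has to be carried out. The Lucas coefficients arise as the sum of the two binomial coefficients contributed by the two terms of the difference, not from a recursion in $n$. Without this computation, or an equivalent one, the third claim of the lemma remains unproved.
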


\begin{proof}
    \textbf{Step 1:} \textit{Formula for the derivatives of $H$.} Here, we assume that $\mu \in C^\infty_{\mathrm{c}}(0,1)$ to derive formulas for the derivatives of $H$. For simplicity, we write $\Phi^{(n)}(t)$ instead of $\partial_t^n \Phi(t)$, for $t \geq 0$. We prove that for $m \geq 0$, 
    \begin{equation}\label{eq:proof:lem:proprietes_H_step_1_1}
        \begin{split}
            & \partial_t^m H(t,s) = (-1)^{m} \int_t^T e^{-\lambda_k (T-\tau)} \left\langle \Phi^{(m)} (\tau - t) \Phi (\tau - s) , \varphi_k^\prime \right\rangle \dd \tau \\
            & \hspace{1cm} - e^{-\lambda_k (T - t)} \sum_{\alpha = 0}^{m - 1} \sum_{\beta = 0}^{m-1-\alpha} (-1)^\beta \binom{m - 1 - \beta}{\alpha} \lambda_k^{m - 1 - \alpha - \beta} \left\langle \Phi^{(\beta)} (0) \Phi^{(\alpha)} (t - s) , \varphi_k^\prime \right\rangle.
        \end{split}    
    \end{equation}
    This formula can be obtained directly by differentiation, but this requires calculating multiple sums involving binomial coefficients. This argument is elementary but quite involved, so for clarity, we give a proof of \eqref{eq:proof:lem:proprietes_H_step_1_1} by induction. The formula is clearly true for $m = 0$ (and also for $m=1$). Assume that \eqref{eq:proof:lem:proprietes_H_step_1_1} holds true for some $m \geq 0$. Then
    \begin{align}
        & \partial_t^{m+1} H(t,s) = (-1)^{m+1} \int_t^T e^{-\lambda_k (T-\tau)} \left\langle \Phi^{(m+1)} (\tau - t) \Phi (\tau - s) , \varphi_k^\prime \right\rangle \dd \tau \\
        & \hspace{1cm} - (-1)^m e^{-\lambda_k (T - t)} \left\langle \Phi^{(m)} (0) \Phi (t - s) , \varphi_k^\prime \right\rangle \\
        & \hspace{1cm} - \lambda_k e^{-\lambda_k (T - t)} \sum_{\alpha = 0}^{m - 1} \sum_{\beta = 0}^{m-1-\alpha} (-1)^\beta \binom{m - 1 - \beta}{\alpha} \lambda_k^{m - 1 - \alpha - \beta} \left\langle \Phi^{(\beta)} (0) \Phi^{(\alpha)} (t - s) , \varphi_k^\prime \right\rangle \\
        & \hspace{1cm} - e^{-\lambda_k (T - t)} \sum_{\alpha = 0}^{m - 1} \sum_{\beta = 0}^{m-1-\alpha} (-1)^\beta \binom{m - 1 - \beta}{\alpha} \lambda_k^{m - 1 - \alpha - \beta} \left\langle \Phi^{(\beta)} (0) \Phi^{(\alpha + 1)} (t - s) , \varphi_k^\prime \right\rangle \label{eq:proof:lem:proprietes_H_step_1_2}
    \end{align}
    To improve readability, we move the integral term to the left-hand side. The change of index $\alpha^\prime = \alpha + 1$ in the sum appearing in line \eqref{eq:proof:lem:proprietes_H_step_1_2} gives
    \begin{equation}
        \begin{split}
            & \partial_t^{m+1} H(t,s) - (-1)^{m+1} \int_t^T e^{-\lambda_k (T-\tau)} \left\langle \Phi^{(m+1)} (\tau - t) \Phi (\tau - s) , \varphi_k^\prime \right\rangle \dd \tau \\
            = \ & - (-1)^m e^{-\lambda_k (T - t)} \left\langle \Phi^{(m)} (0) \Phi (t - s) , \varphi_k^\prime \right\rangle \\
            & - e^{-\lambda_k (T - t)} \sum_{\alpha = 0}^{m - 1} \sum_{\beta = 0}^{m-1-\alpha} (-1)^\beta \binom{m - 1 - \beta}{\alpha} \lambda_k^{m - \alpha - \beta} \left\langle \Phi^{(\beta)} (0) \Phi^{(\alpha)} (t - s) , \varphi_k^\prime \right\rangle \\
            & - e^{-\lambda_k (T - t)} \sum_{\alpha = 1}^{m} \sum_{\beta = 0}^{m-\alpha} (-1)^\beta \binom{m - 1 - \beta}{\alpha - 1} \lambda_k^{m - \alpha - \beta} \left\langle \Phi^{(\beta)} (0) \Phi^{(\alpha)} (t - s) , \varphi_k^\prime \right\rangle 
        \end{split}
    \end{equation}
    We gather the sums with index $\alpha \in \llbracket 1, m-1 \rrbracket$ and $\beta \in \llbracket 0, m - 1 - \alpha \rrbracket$, by using the identities
    \begin{equation}
        \sum_{\alpha = 0}^{m - 1} \sum_{\beta = 0}^{m-1-\alpha} f(\alpha, \beta) = \sum_{\beta = 0}^{m - 1} f(0, \beta) + \sum_{\alpha = 1}^{m - 1} \sum_{\beta = 0}^{m - 1 -\alpha} f(\alpha, \beta),
    \end{equation}
    and
    \begin{equation}
        \sum_{\alpha = 1}^{m} \sum_{\beta = 0}^{m-\alpha} f(\alpha, \beta) = \sum_{\alpha = 1}^{m} f(\alpha, m - \alpha) + \sum_{\alpha = 1}^{m - 1} \sum_{\beta = 0}^{m - 1 -\alpha} f(\alpha, \beta),
    \end{equation}
    which are valid for any function $f : \mathbb{N}^2 \rightarrow \mathbb{R}$, together with the formula $\binom{m - 1 - \beta}{\alpha - 1}  + \binom{m - 1 - \beta}{\alpha} = \binom{m - \beta}{\alpha}$, to find
    \begin{align}
        & \partial_t^{m+1} H(t,s) - (-1)^{m+1} \int_t^T e^{-\lambda_k (T-\tau)} \left\langle \Phi^{(m+1)} (\tau - t) \Phi (\tau - s) , \varphi_k^\prime \right\rangle \dd \tau \\
        = \ & - (-1)^m e^{-\lambda_k (T - t)} \left\langle \Phi^{(m)} (0) \Phi (t - s) , \varphi_k^\prime \right\rangle \label{eq:proof:lem:proprietes_H_step_1_4}\\
        & - e^{-\lambda_k (T - t)} \sum_{\beta = 0}^{m-1} (-1)^\beta \lambda_k^{m - \beta} \left\langle \Phi^{(\beta)} (0) \Phi (t - s) , \varphi_k^\prime \right\rangle \label{eq:proof:lem:proprietes_H_step_1_5}\\
        & - e^{-\lambda_k (T - t)} \sum_{\alpha = 1}^{m} (-1)^{m-\alpha} \left\langle \Phi^{(m - \alpha)}(0) \Phi^{(\alpha)} (t - s) , \varphi_k^\prime \right\rangle \\
        & - e^{-\lambda_k (T - t)} \sum_{\alpha = 1}^{m - 1} \sum_{\beta = 0}^{m - 1 -\alpha} (-1)^\beta \binom{m - \beta}{\alpha} \lambda_k^{m - \alpha - \beta} \left\langle \Phi^{(\beta)} (0) \Phi^{(\alpha)} (t - s) , \varphi_k^\prime \right\rangle .
    \end{align}
    Gathering \eqref{eq:proof:lem:proprietes_H_step_1_4} and \eqref{eq:proof:lem:proprietes_H_step_1_5}, and using the identity
    \begin{equation}
        \sum_{\beta = 0}^{m} f(0, \beta) + \sum_{\alpha = 1}^{m} f(\alpha, m - \alpha) + \sum_{\alpha = 1}^{m - 1} \sum_{\beta = 0}^{m - 1 -\alpha} f(\alpha, \beta) = \sum_{\alpha = 0}^{m} \sum_{\beta = 0}^{m-\alpha} f(\alpha, \beta),
    \end{equation}
    one obtains \eqref{eq:proof:lem:proprietes_H_step_1_1} for $m + 1$. Note that \eqref{eq:proof:lem:proprietes_H_step_1_1} implies that for $m \geq 0$ and $n \geq 0$,
    \begin{equation}\label{eq:proof:lem:proprietes_H_step_1_3}
        \begin{split}
            & \partial_s^n \partial_t^m H(t,s) = (-1)^{m + n} \int_t^T e^{-\lambda_k (T-\tau)} \left\langle \Phi^{(m)} (\tau - t) \Phi^{(n)} (\tau - s) , \varphi_k^\prime \right\rangle \dd \tau \\
            & \hspace{0.5cm} - e^{-\lambda_k (T - t)} \sum_{\alpha = 0}^{m - 1} \sum_{\beta = 0}^{m-1-\alpha} (-1)^{n+\beta} \binom{m - 1 - \beta}{\alpha} \lambda_k^{m - 1 - \alpha - \beta} \left\langle \Phi^{(\beta)} (0) \Phi^{(n + \alpha)} (t - s) , \varphi_k^\prime \right\rangle.
        \end{split}    
    \end{equation}

    \textbf{Step 2:} \textit{Regularity of $H$.}
    Now, consider $\mu \in H_{(0)}^{4N}(0,1)$. One has
    \begin{equation}
        \Phi \in \bigcap_{n = 0}^{2N} C^n([0, T]; H_{(0)}^{4N - 2n}(0,1)),
    \end{equation}
    and $\partial_t^n \Phi(0) = \partial_x^{2n} \Phi(0) = \mu^{(2n)}$, for all $n \in \llbracket 0, 2N \rrbracket$. In particular, for all $(n, m) \in \llbracket 0, 2N \rrbracket^2$, the functions
    \begin{equation}
        (t_1, t_2) \longmapsto \left\langle \partial_t^n \Phi (t_1) \partial_t^m \Phi (t_2) , \varphi_k^\prime \right\rangle \quad \text{ and } \quad 
        t_1 \longmapsto \left\langle \partial_t^n \Phi (0) \partial_t^m \Phi (t_1) , \varphi_k^\prime \right\rangle
    \end{equation}
    are continuous on $[0, T]^2$ and $[0,T]$ respectively. This (largely) implies that all the computations made to obtain \eqref{eq:proof:lem:proprietes_H_step_1_3} are valid if $n,m \leq N$, and this proves the regularity claimed for $H$. Furthermore, Lemma \ref{lem:well-posedness_heat_L1L2} (with $f = 0$) gives
    \begin{equation}
        \left\Vert \partial_t^n \Phi \right\Vert_{L^\infty((0,T);L^2)} \leq C \left\Vert \mu^{(2n)} \right\Vert_{L^2},
    \end{equation}
    for all $n \in \llbracket 0, 2 N \rrbracket$, where $C > 0$ is an absolute constant, and if $T \in (0,1)$, then
    \begin{equation}
        \begin{split}
            & \sum_{0 \leq n, m \leq N} \left\vert \partial_s^n \partial_t^m H(t,s) \right\vert \\
            \leq \ & C \left\Vert \mu \right\Vert_{H_{(0)}^{2N-2}} \sum_{n = 0}^{2N-1} \left\Vert \partial_t^{n} \Phi \right\Vert_{L^\infty((0,T);L^2)} 
             + \sum_{0 \leq n, m \leq N}  \int_0^T \left\Vert \partial_t^n \Phi (\tau - t) \right\Vert_{L^2} \left\Vert \partial_t^m \Phi(\tau - s) \right\Vert_{L^2} \dd \tau \\
            \leq \ & C \left\Vert \mu \right\Vert_{H_{(0)}^{4N-2}}^2 ,
        \end{split}
    \end{equation}
    where $C > 0$ is a constant which depends only on $k$ and $N$. This gives \eqref{eq:lem:propriete_H_2}. 
    
    \textbf{Step 3:} \textit{Computation of $a_k^n(\mu)$.}
    Finally, we compute $ \left( \partial_s^{n} \partial_t^{n-1} - \partial_s^{n-1} \partial_t^{n} \right) H (t,t)$. One has
    \begin{equation}
        \begin{split}
            & \left( \partial_s^{n} \partial_t^{n-1} - \partial_s^{n-1} \partial_t^{n} \right) H (t,t) \\
            =  & - e^{-\lambda_k (T - t)} \sum_{\alpha = 0}^{n - 2} \sum_{\beta = 0}^{n-2-\alpha} (-1)^{n+\beta} \binom{n - 2 - \beta}{\alpha} \lambda_k^{n - 2 - \alpha - \beta} \left\langle \Phi^{(\beta)} (0) \Phi^{(n + \alpha)} (0) , \varphi_k^\prime \right\rangle \\
            & + e^{-\lambda_k (T - t)} \sum_{\alpha = 0}^{n - 1} \sum_{\beta = 0}^{n-1-\alpha} (-1)^{n-1+\beta} \binom{n - 1 - \beta}{\alpha} \lambda_k^{n - 1 - \alpha - \beta} \left\langle \Phi^{(\beta)} (0) \Phi^{(n -1 + \alpha)} (0) , \varphi_k^\prime \right\rangle.
        \end{split}
    \end{equation}
    The change of index $\alpha^\prime = \alpha + 1$ in the first sum gives
    \begin{equation}
        \begin{split}
            & \left( \partial_s^{n} \partial_t^{n-1} - \partial_s^{n-1} \partial_t^{n} \right) H (t,t) \\
            =  & - e^{-\lambda_k (T - t)} \sum_{\alpha = 1}^{n - 1} \sum_{\beta = 0}^{n-1-\alpha} (-1)^{n+\beta} \binom{n - 2 - \beta}{\alpha - 1} \lambda_k^{n - 1 - \alpha - \beta} \left\langle \Phi^{(\beta)} (0) \Phi^{(n -1 + \alpha)} (0) , \varphi_k^\prime \right\rangle \\
            & + e^{-\lambda_k (T - t)} \sum_{\alpha = 0}^{n - 1} \sum_{\beta = 0}^{n-1-\alpha} (-1)^{n-1+\beta} \binom{n - 1 - \beta}{\alpha} \lambda_k^{n - 1 - \alpha - \beta} \left\langle \Phi^{(\beta)} (0) \Phi^{(n -1 + \alpha)} (0) , \varphi_k^\prime \right\rangle.
        \end{split}
    \end{equation}
    and gathering the sum with index $\alpha \in \llbracket 1, n-1 \rrbracket$, one finds
    \begin{equation}
        \begin{split}
            & \left( \partial_s^{n} \partial_t^{n-1} - \partial_s^{n-1} \partial_t^{n} \right) H (t,t) \\
            =  & - e^{-\lambda_k (T - t)} \sum_{\alpha = 1}^{n - 1} \sum_{\beta = 0}^{n-1-\alpha} (-1)^{n+\beta} \Lucas{n - 1 - \beta}{\alpha} \lambda_k^{n - 1 - \alpha - \beta} \left\langle \Phi^{(\beta)} (0) \Phi^{(n -1 + \alpha)} (0) , \varphi_k^\prime \right\rangle \\
            & + e^{-\lambda_k (T - t)} \sum_{\beta = 0}^{n-1} (-1)^{n-1+\beta} \lambda_k^{n - 1 - \beta} \left\langle \Phi^{(\beta)} (0) \Phi^{(n -1)} (0) , \varphi_k^\prime \right\rangle\\
            =  & - e^{-\lambda_k (T - t)} \sum_{\alpha = 0}^{n - 1} \sum_{\beta = 0}^{n-1-\alpha} (-1)^{n+\beta} \Lucas{n - 1 - \beta}{\alpha} \lambda_k^{n - 1 - \alpha - \beta} \left\langle \Phi^{(\beta)} (0) \Phi^{(n -1 + \alpha)} (0) , \varphi_k^\prime \right\rangle.
        \end{split}
    \end{equation}
    Using $\Phi^{(p)}(0) = \partial_t^p \Phi(0) = \partial_x^{2p} \Phi(0) = \mu^{(2p)}$, one finds the claimed expression of $a_k^n(\mu)$.
\end{proof}

\subsection{Coercive estimate}

We prove the following coercive estimate. The proof relies on repeated integration by parts, following the approach used in \cite[Proposition 3.3]{BeauchardMarbach2020}, \cite[Proposition 5.1]{Bournissou2023_Quad}, and \cite[Proposition A.3]{BeauchardMarbachPerrinSchrod}.

\begin{proposition}\label{prop:quad_estimates_IPP}
    Let $N \geq 1$, $k \geq 1$, and $\mu \in H_{(0)}^{4N}(0,1)$ such that $N$ is the minimal integer such that $a_k^N(\mu) \neq 0$. Then there exists $C > 0$ such that for all $T \in (0, 1)$ and all $u \in L^2(0, T)$,
    \begin{equation}
        \left\vert \left\langle y_2(T), \varphi_k \right\rangle - a_k^N(\mu) \Vert u_N \Vert_{L^2}^2 \right\vert \leq C \left( T \Vert u_N \Vert_{L^2}^2 + \left\vert u_1(T) \right\vert^2 + \cdots + \left\vert u_N(T) \right\vert^2 \right).
    \end{equation}
\end{proposition}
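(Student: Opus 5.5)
We start from the representation \eqref{eq:forme_quad}, $\langle y_2(T),\varphi_k\rangle=\int_0^T u(t)\int_0^t u(s)H(t,s)\dd s\dd t$. The plan is to integrate by parts repeatedly, alternately in $s$ and in $t$, moving derivatives from $u$ onto $H$ so that $u$ is replaced by its iterated primitives $u_n$ (Definition \ref{def:iterated_primitive}). Lemma \ref{lem:proprietes_H} supplies the regularity of $H$ needed for these manipulations, the uniform bound \eqref{eq:lem:propriete_H_2}, and the diagonal identity \eqref{eq:lem:formula_H_with_a_k_n}.

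\textbf{Inductive claim.} We prove by induction on $n\in\llbracket 1,N\rrbracket$ the identity
\begin{equation*}
\langle y_2(T),\varphi_k\rangle=\sum_{p=1}^{n}a_k^p(\mu)\int_0^T e^{-\lambda_k(T-t)}u_p(t)^2\dd t+(-1)^{n}\int_0^T\!\!\int_0^t u_n(t)u_n(s)\,\partial_t^n\partial_s^nH(t,s)\dd s\dd t+\mathcal{B}_n,
\end{equation*}
where $\mathcal{B}_n$ collects boundary terms. Each term of $\mathcal{B}_n$ contains a factor $u_p(T)$ with $p\le n$, multiplied by an integral of the form $\int_0^T u_q(s)\,\partial^\ast H(T,s)\dd s$ or a similar expression.

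\textbf{Passing from $n-1$ to $n$ (one round of integration by parts).}
\begin{enumerate}[label=(\roman*)]
\item Integrate by parts in $s$ the term $\int_0^t u_{n-1}(s)\,\partial_s^{n-1}\partial_t^{n-1}H(t,s)\dd s$. This produces the diagonal term $u_n(t)\,\partial_s^{n-1}\partial_t^{n-1}H(t,t)$, using $u_n(0)=0$, together with an integral of $u_n(s)\,\partial_s^n\partial_t^{n-1}H$.
\item Integrate by parts in $t$. This produces boundary terms at $t=T$ carrying $u_n(T)$, integrals containing $u_n(t)u_n(s)\,\partial_t^n\partial_s^nH$, and a second diagonal term.
\item The diagonal contributions combine into $\int u_{n-1}u_n\,\partial^{n-1}_t\partial^{n-1}_sH(t,t)\dd t$. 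Writing $u_{n-1}u_n=\tfrac12(u_n^2)'$ and integrating by parts once more gives $\int u_n^2(\cdot)$ against $-\tfrac12\frac{\D}{\D t}\big[\partial_t^{n-1}\partial_s^{n-1}H(t,t)\big]$ plus a term with $u_n(T)^2$.
\item Compute $\frac{\D}{\D t}[G(t,t)]=(\partial_t G+\partial_s G)(t,t)$ for $G=\partial_t^{n-1}\partial_s^{n-1}H$. Combining this with the cross terms produced in (ii), the coefficient of $u_n^2$ should reduce exactly to the antisymmetric combination $\tfrac12(\partial_s^n\partial_t^{n-1}-\partial_s^{n-1}\partial_t^n)H(t,t)$. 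By \eqref{eq:lem:formula_H_with_a_k_n} this equals $a_k^n(\mu)e^{-\lambda_k(T-t)}$.
\end{enumerate}

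\textbf{Conclusion of the estimate.} Since $a_k^1(\mu)=\cdots=a_k^{N-1}(\mu)=0$, only the $p=N$ term survives at stage $n=N$.
\begin{itemize}
\item Write $e^{-\lambda_k(T-t)}=1+O(T)$, since $\lambda_k$ is fixed and $T<1$. This gives $a_k^N\Vert u_N\Vert_{L^2}^2+O(T\Vert u_N\Vert^2)$.
\item The double-integral remainder is bounded by $\Vert\partial_t^N\partial_s^NH\Vert_\infty\Vert u_N\Vert_{L^1}^2\le CT\Vert u_N\Vert_{L^2}^2$, using \eqref{eq:lem:propriete_H_2} and Cauchy--Schwarz.
\item Each boundary term has the form $u_p(T)\cdot\int_0^T u_q\,\partial^\ast H$ with $p,q\le N$. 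Bound it via Young's inequality by $|u_p(T)|^2+T\Vert u_q\Vert^2_{L^2}$.
\end{itemize}

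\textbf{Main obstacle.} The difficulty is in the bookkeeping of the boundary terms. The $L^2$-norms of the lower primitives $u_q$ with $q<N$ must be converted into $\Vert u_N\Vert$ plus endpoint values. To avoid this, we will arrange each integration by parts so that the boundary term always pairs $u_p(T)$ with the highest available primitive, or we will integrate by parts again inside $\int_0^T u_q(s)\partial^\ast H(T,s)\dd s$ to raise $q$ up to $N$. This yields terms $|u_{q+1}(T)|$ and $\int u_N\,\partial^{\ast\ast}H$, each bounded by $\sqrt T\Vert u_N\Vert$. After Young's inequality, every boundary term is then $\le C(\sum_{p\le N}|u_p(T)|^2+T\Vert u_N\Vert^2)$.

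The other delicate point is checking in step (iv) that the diagonal coefficients really collapse to the antisymmetric combination. We will verify this explicitly for $n=1$ (giving $a_k^1$), and then note that the computation at step $n$ is formally identical with $H$ replaced by $\partial_t^{n-1}\partial_s^{n-1}H$.
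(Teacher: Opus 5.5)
Your proposal is correct and is essentially the paper's proof. The shared steps are: recursive integration by parts (the paper's formula applied with $(u,H)$ replaced by $(u_n,\partial_t^n\partial_s^n H)$); diagonal terms collapsing via Lemma~\ref{lem:proprietes_H} to $a_k^n(\mu)e^{-\lambda_k(T-t)}$; and boundary integrals $\int_0^T u_q(s)\,\partial^\ast H(T,s)\dd s$ pushed up to $u_N$ by further integrations by parts in $s$ (the paper's recursion on $\gamma_{i,j}^{n,T}$) before Cauchy--Schwarz and Young.

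There are two harmless slips. First, the remaining double integral carries the sign $+1$ rather than $(-1)^n$, since each round involves two integrations by parts. Second, the endpoint values $u_{q+1}(T),\dots,u_N(T)$ produced when raising $q$ are not bounded by $\sqrt{T}\Vert u_N\Vert_{L^2}$; they are simply kept on the right-hand side, which your final Young step in fact does.
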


\begin{proof}
    In this proof, the symbol $\lesssim$ is used for constants that depend only on $\mu$, $N$ and $k$.
    The proof is based on a recursive application of the integration by parts formula
    \begin{equation}\label{eq:proof:prop_quad_expansion}
        \begin{split}
            \int_0^T u(t) \int_0^t u(s) H(t, s) \dd s \dd t = \ & \frac{1}{2} \int_0^T u_1(t)^2 \left( \partial_s H(t,t) - \partial_t H(t,t) \right) \dd t \\
            & + \int_0^T u_1(t) \int_0^t u_1(s) \partial_t \partial_s H(t, s) \dd s \dd t \\
            & + \frac{1}{2} u_1(T)^2 H(T,T) - u_1(T) \int_0^T u_1(s) \partial_s H(T,s) \dd s
        \end{split}
    \end{equation}
    in \eqref{eq:forme_quad}. Set 
    \begin{equation}
        \gamma_{i,j}^{n, T} = \int_0^T u_n(s) \partial_t^i \partial_s^j H(T,s) \dd s.
    \end{equation}
    Applying \eqref{eq:proof:prop_quad_expansion} with $(u, H)$ replaced by $(u_n, \partial_t^n \partial_s^n H)$ yields the term $u_{n+1}(T) \gamma_{n, n+1}^{n+1, T}$. Furthermore, for $i \in \llbracket 0, N \rrbracket$ and $j, n \in \llbracket 0, N - 1 \rrbracket$, integration by parts gives
    \begin{equation}
        \gamma_{i,j}^{n, T} = - \gamma_{i, j + 1}^{n + 1, T} + u_{n+1}(T) \partial_t^i \partial_s^{j} H(T,T).
    \end{equation}
    Summarizing, one obtains
    \begin{equation}\label{eq:IPP_iterated_kernel_H}
        \begin{split}
            \int_0^T u(t) \int_0^t u(s) H(t, s) \dd s \dd t = \ & \frac{1}{2} \sum_{n = 1}^N \int_0^T u_n(t)^2 \left( \partial_s^{n} \partial_t^{n-1} - \partial_s^{n-1} \partial_t^{n} \right) H (t,t) \dd t \\
            & + \int_0^T u_N(t) \int_0^t u_N(s) \partial_t^N \partial_s^N H(t, s) \dd s \dd t \\
            & + Q_{N, T} \left(u_1(T), \cdots u_N(T), \gamma_{0, N}^{N, T}, \cdots \gamma_{N-1, N}^{N, T} \right),
        \end{split}
    \end{equation}
    where $Q_{N, T}$ denotes a quadratic form on $\mathbb{R}^{2N}$ whose coefficients are uniformly bounded for $T \in (0, 1)$.
    Since $a_k^1 = \cdots = a_k^{N-1} = 0$, Lemma \ref{lem:proprietes_H} gives
    \begin{equation}
        \frac{1}{2} \sum_{n = 1}^N \int_0^T u_n(t)^2 \left( \partial_s^{n-1} \partial_t^{n-1} \left( \partial_s H - \partial_t H \right) \right)(t,t) \dd t = a_k^N(\mu) \int_0^T u_N(t)^2 e^{-\lambda_k(T-t)} \dd t ,
    \end{equation}
    implying
    \begin{equation}
        \left\vert \frac{1}{2} \sum_{n = 1}^N \int_0^T u_n(t)^2 \left( \partial_s^{n-1} \partial_t^{n-1} \left( \partial_s H - \partial_t H \right) \right)(t,t) \dd t - a_k^N(\mu) \Vert u_N \Vert_{L^2}^2 \right\vert \lesssim T \Vert u_N \Vert_{L^2}^2.
    \end{equation}
    Using the Cauchy-Schwarz inequality and the estimates on the derivatives of $H$ given by Lemma \ref{lem:proprietes_H}, one finds 
    \begin{equation}
        \begin{split}
            & \left\vert \int_0^T u_N(t) \int_0^t u_N(s) \partial_t^N \partial_s^N H(t, s) \dd s \dd t \right\vert + \left\vert Q_{N, T} \left(u_1(T), \cdots u_N(T), \gamma_{0, N}^{N, T}, \cdots \gamma_{N-1, N}^{N, T} \right) \right\vert \\
            \lesssim \ & T \Vert u_N \Vert_{L^2}^2 + \left\vert u_1(T) \right\vert^2 + \cdots + \left\vert u_N(T) \right\vert^2.
        \end{split}
    \end{equation}
    This completes the proof of Proposition \ref{prop:quad_estimates_IPP}.
\end{proof}

\section{Lie brackets, non-vanishing Fourier coefficients}\label{sec:lie_brackets}

\subsection{Interpretation in terms of Lie brackets}

\begin{definition}[Lie brackets]
    Let $E$ be a Fréchet space, and let $f,g \in C^1(E, E)$. Then the Lie bracket of $f$ and $g$ is defined by
    \begin{equation}
        \begin{array}{cccc}
            [f, g] : & E & \longrightarrow & E \\
            & y & \longmapsto & \D g(y) \left( f(y) \right) - \D f(y) \left( g(y) \right)
        \end{array},
    \end{equation}
    where $\D f(y)$ and $\D g(y)$ denote the differentials of $f$ and $g$ at $y$. For $n \geq 0$, we define $\ad_f^n(g)$ recursively as follows: we set $\ad_f^0(g) = g$, and for $n \geq 0$, if $\ad_f^n(g)$ is well-defined and belongs to $C^1(E, E)$, then we set
    \begin{equation}
        \ad_f^{n+1}(g) = [f, \ad_f^n(g)].
    \end{equation}
\end{definition}

\begin{proposition}[Interpretation of $a_k^N(\mu)$ in terms of Lie brackets]\label{prop:Lie_brackets}
    Let $\mu \in D(A^\infty)$ and $k \geq 1$. For $y \in D(A^\infty)$, set $f_0(y) = y^{\prime \prime} - y y^\prime$ and $f_1(y) = \mu$. For $n \geq 0$, set $M_n = \ad_{f_0}^{n}(f_1)$, and for $N \geq 1$, set $W_N = \left[M_{N-1}, M_N\right]$. Then, for all $N \geq 1$, one has
    \begin{equation}
        a_k^N(\mu) = - \frac{1}{2} \left\langle W_N(0) , \varphi_k \right\rangle.
    \end{equation}
\end{proposition}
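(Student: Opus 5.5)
The plan is to compute $W_N(0)$ through the first-order Taylor expansion of each $M_n$ at $y=0$. Since $f_1$ is constant, $[f,g](0)$ only involves values and differentials at $0$. Concretely,
\begin{equation*}
W_N(0) = \D M_N(0)\big(M_{N-1}(0)\big) - \D M_{N-1}(0)\big(M_N(0)\big),
\end{equation*}
so it suffices to know $M_n(0)$ and the linear operator $L_n := \D M_n(0)$ for every $n$.

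\textbf{Recursion for $M_n(0)$ and $L_n$.} Using $f_0(0)=0$ and $\D f_0(y)v = v'' - (yv)'$, the recursion $M_{n+1} = \D M_n(\cdot)\,f_0 - \D f_0(\cdot)\,M_n$ evaluated at $0$ gives $M_{n+1}(0) = -\partial_x^2 M_n(0)$. Hence $M_n(0) = (-1)^n \mu^{(2n)}$. Linearizing the same recursion at $0$ gives
\begin{equation*}
L_{n+1} v = L_n(v'') - \partial_x^2(L_n v) + \big(v\,M_n(0)\big)'.
\end{equation*}
The second-order term $\D^2 M_n(0)(v, f_0(0))$ vanishes because $f_0(0)=0$, and the term $(vM_n(0))'$ comes from the $-(yM_n)'$ part of $-\D f_0(y)M_n(y)$. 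With $L_0 = 0$, this unrolls by induction (a commutator expansion with binomial coefficients) to
\begin{equation*}
L_n v = \sum_{j=0}^{n-1}\sum_{i=0}^{n-1-j} (-1)^{i+j}\binom{n-1-j}{i}\, \partial_x^{2i}\Big(\big(\mu^{(2j)}\, \partial_x^{2(n-1-j-i)} v\big)'\Big).
\end{equation*}
This parallels Step 1 of Lemma \ref{lem:proprietes_H}, and I would prove it by the same kind of induction.

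\textbf{Pairing with $\varphi_k$.} For $\mu \in D(A^\infty)$ the quantities involved are smooth, and I will check that the boundary terms vanish, using that even-order derivatives of $\mu$ vanish at $0$ and $1$. Then $\langle \partial_x^{2i} g, \varphi_k\rangle = (-\lambda_k)^i \langle g,\varphi_k\rangle$ and $\langle g', \varphi_k\rangle = -\langle g, \varphi_k'\rangle$. Inserting $v = M_{N-1}(0) = (-1)^{N-1}\mu^{(2N-2)}$ into $L_N$, and $v = M_N(0) = (-1)^N\mu^{(2N)}$ into $L_{N-1}$, yields two double sums of terms $\lambda_k^{\,\cdot}\langle \mu^{(2\beta)}\mu^{(2(N-1+\alpha))},\varphi_k'\rangle$. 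Their coefficients are binomial coefficients $\binom{N-1-\beta}{\alpha}$ and $\binom{N-2-\beta}{\alpha-1}$ after the shift $\alpha \mapsto \alpha+1$ in the second sum. Exactly as in Step 3 of Lemma \ref{lem:proprietes_H}, these combine through $\binom{n}{p}+\binom{n-1}{p-1} = \Lucas{n}{p}$ into the Lucas coefficients. This should give $\langle W_N(0),\varphi_k\rangle = -2a_k^N(\mu)$ after matching signs.

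\textbf{Main obstacle.} The delicate step is the combinatorial bookkeeping: obtaining the closed form for $L_n$, and then reindexing the double sums, where the sign $(-1)^{N+\beta}$ and the powers $\lambda_k^{N-1-\alpha-\beta}$ must match \eqref{eq:def:a_k_n} exactly. In particular, the binomial sum over $i$ must collapse into the $\lambda_k$-power structure, which I expect to happen by summing $\binom{n-1-j}{i}(-1)^i(-\lambda_k)^i$ together with the remaining derivatives landing on $v = \pm\mu^{(2m)}$. A secondary concern is justifying the boundary terms in the integrations by parts, since products like $\mu^{(2j)}\mu^{(2m)}$ have vanishing even-order derivatives only in a restricted sense. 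If the direct sum manipulation becomes too messy, I would fall back on proving the identity by induction on $N$ in parallel with \eqref{eq:proof:lem:proprietes_H_step_1_3}.
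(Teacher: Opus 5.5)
Your proposal is correct and follows essentially the same route as the paper. Both arguments compute $M_n(0)=(-1)^n\mu^{(2n)}$, derive the recursion $L_{n+1}v=L_n(v'')-(L_nv)''+(vM_n(0))'$ at $0$, unroll it with binomial coefficients, pair with $\varphi_k$, and recombine via $\binom{N-1-\beta}{\alpha}+\binom{N-2-\beta}{\alpha-1}$ into the Lucas coefficients.

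The only difference is the order of the last two steps. The paper pairs with $\varphi_k$ before unrolling, which gives the scalar recursion $F_{n+1}(Y)=F_n(Y''+\lambda_k Y)+G_n(Y)$; you unroll at the operator level and pair afterwards. Once you pair, each term gives $(-1)^i(-\lambda_k)^i=\lambda_k^i$ directly, so no collapse of the $i$-sum is needed. The boundary terms vanish as you planned: by Leibniz, every odd-order derivative of $\mu^{(2j)}\mu^{(2m)}$ contains a factor that is an even-order derivative of $\mu$, and such factors vanish at $0$ and $1$.
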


\begin{remark}
    Proposition \ref{prop:Lie_brackets} is the main ingredient in the proof of Lemma \ref{lem:closed_loop_nonzero_moments} below. It will only be applied to $\mu$ of the form $\sum_{j \in J} \langle \mu, \varphi_j \rangle \varphi_j$, where $J$ is a finite subset of $\mathbb{N}^\ast$. In this setting, Proposition \ref{prop:Lie_brackets} provides the Lie-bracket interpretation of the coefficient $a_k^N(\mu)$ introduced in Lemma \ref{lem:proprietes_H}. This identity is the infinite-dimensional counterpart of the standard finite-dimensional relation between an $H^{-N}$-quadratic drift and Lie brackets. For completeness, we nevertheless provide a direct proof.
\end{remark}

\begin{proof}  
    One has $\D f_1(y) (Y) = 0$, 
    \begin{equation}
        \D f_0(y) (Y) = Y^{\prime \prime} - y Y^\prime - y^\prime Y, \quad \text{ and } \quad \D^2 f_0(0) (Y, Z) = - Y Z^\prime - Y^\prime Z = - (YZ)^{\prime},
    \end{equation}
    and by definition,
    \begin{equation}\label{eq:proof:prop:Lie_brackets_1}
        W_N(0) = \D M_{N}(0)( M_{N-1}(0) ) - \D M_{N-1}(0)( M_{N}(0) ).
    \end{equation}

    \textbf{Step 1.} \textit{Induction formulas.} 
    First, by definition, $M_0(0) = f_1(0) = \mu$, and for $n \geq 0$,
    \begin{equation}
        M_{n+1}(0) = \left[f_0, M_n \right](0) =  \D M_n(0) \left( f_0(0) \right) - \D f_0(0) \left( M_n(0) \right) = - \left( M_n(0) \right)^{\prime \prime},
    \end{equation}
    yielding
    \begin{equation}\label{eq:proof:prop:Lie_brackets_2}
        M_n(0) = (-1)^n \mu^{(2n)}.
    \end{equation}

    Second, for $n \geq 0$,
    \begin{equation}
        M_{n+1}(y) = \left[f_0, M_n \right](y) =  \D M_n(y) \left( f_0(y) \right) - \D f_0(y) \left( M_n(y) \right),
    \end{equation}
    yielding
    \begin{align}
        \D M_{n+1}(0)(Y) = \ & \D^2 M_n(0) \left( f_0(0), Y \right) + \D M_n(0) \left( \D f_0(0)(Y) \right) \\
        & - \D^2 f_0(0) \left( M_n(0), Y \right) - \D f_0(0) \left( \D M_n(0)(Y) \right) \\
        = \ & \D M_n(0) \left( Y^{\prime \prime} \right) + (-1)^n \left( \mu^{(2n)} Y \right)^{\prime} - \left( \D M_n(0)(Y) \right)^{\prime \prime}. \label{eq:proof:prop:Lie_brackets_3}
    \end{align}
    Note that those formulas hold for $y, Y \in D(A^\infty)$, and that $\D M_n(0)(Y) \in D(A^\infty)$ for all $Y \in D(A^\infty)$. This property is automatic since $M_n: D(A^\infty) \rightarrow D(A^\infty)$; alternatively, it can be verified directly from the fact that $(YZ)^\prime \in D(A^\infty)$ for all $Y,Z \in D(A^\infty)$. Using \eqref{eq:proof:prop:Lie_brackets_1}, \eqref{eq:proof:prop:Lie_brackets_2} and \eqref{eq:proof:prop:Lie_brackets_3}, one can compute recursively $W_N(0)$ for all $N \geq 1$. Note that we do not need an explicit formula for $W_N(0)$ for $N \geq 1$; we only need to compute $\left\langle W_N(0) , \varphi_k \right\rangle$, which allows us to replace second derivatives by multiplication by $-\lambda_k$. 
    
    \textbf{Step 2.} \textit{Computation of $\left\langle \D M_n(0) (Y), \varphi_k \right\rangle$ for $n \geq 0$ and $Y \in D(A^\infty)$.} 
    To lighten the notations, set $F_n(Y) = \left\langle \D M_n(0) (Y), \varphi_k \right\rangle$. Since $M_0(y) = f_1(y) = \mu$, one has $F_0(Y) = 0$. Using \eqref{eq:proof:prop:Lie_brackets_3} together with integration by parts, one obtains
    \begin{equation}
        F_{n+1}(Y) = F_n \left( Y^{\prime \prime} \right) + (-1)^n \left\langle \left( \mu^{(2n)} Y \right)^{\prime}, \varphi_k \right\rangle + \lambda_k F_n(Y).
    \end{equation}
    Note that no boundary term appears when performing the integration by parts, since $\D M_n(0) (Y) \in H_0^1(0,1)$ for all $n \geq 0$.
    Setting $G_n(Y) = (-1)^n \left\langle \left( \mu^{(2n)} Y \right)^{\prime}, \varphi_k \right\rangle$, this relation can be rewritten as
    \begin{equation}
        F_{n+1}(Y) = F_n \left(  Y^{\prime \prime} + \lambda_k Y \right) + G_n(Y).
    \end{equation}
    A straightforward induction gives $F_n = \sum_{p = 0}^{n-1} G_p \circ \left( \partial_x^2 + \lambda_k \right)^{n-1-p}$, implying
    \begin{align}
        F_n(Y) & = \sum_{p = 0}^{n-1} \sum_{q = 0}^{n-1-p} (-1)^p \binom{n-1-p}{q} \lambda_k^q \left\langle \left( \mu^{(2p)} Y^{(2(n-1-p-q))} \right)^{\prime}, \varphi_k \right\rangle \\
         & = - \sum_{p = 0}^{n-1} \sum_{q = 0}^{n-1-p} (-1)^p \binom{n-1-p}{q} \lambda_k^q \left\langle \mu^{(2p)} Y^{(2(n-1-p-q))} , \varphi_k^\prime \right\rangle . \label{eq:proof:prop:Lie_brackets_4} 
    \end{align}

    \textbf{Step 3.} \textit{End of the proof.}  
    Using \eqref{eq:proof:prop:Lie_brackets_1}, \eqref{eq:proof:prop:Lie_brackets_2} and \eqref{eq:proof:prop:Lie_brackets_4}, one finds 
    \begin{equation}
        \begin{split}
            \left\langle W_N(0) , \varphi_k \right\rangle & = (-1)^{N+1} F_N \left( \mu^{(2(N-1))} \right) + (-1)^{N+1} F_{N-1} \left( \mu^{(2N)} \right)  \\
            & = (-1)^{N} \sum_{p = 0}^{N-1} \sum_{q = 0}^{N-1-p} (-1)^p \binom{N-1-p}{q} \lambda_k^q \left\langle \mu^{(2p)} \mu^{(2(2N-2-p-q))} , \varphi_k^\prime \right\rangle \\
            & \hspace{0.5cm} +  (-1)^{N} \sum_{p = 0}^{N-2} \sum_{q = 0}^{N-2-p} (-1)^p \binom{N-2-p}{q} \lambda_k^q \left\langle \mu^{(2p)} \mu^{(2(2N-2-p-q))} , \varphi_k^\prime \right\rangle\\
            & = (-1)^{N} \sum_{p = 0}^{N-1} \sum_{q = 0}^{N-1-p} c_{p,q}^N \lambda_k^q \left\langle \mu^{(2p)} \mu^{(2(2N-2-p-q))} , \varphi_k^\prime \right\rangle,
        \end{split}
    \end{equation}  
    with $c_{p,q}^N = (-1)^p \binom{N-1-p}{q} + (-1)^p \binom{N-2-p}{q} \mathbbm{1}_{p \leq N-2} \mathbbm{1}_{q \leq N-2-p}$. Making the change of index $p = \beta$, and then $q = N - 1 - \alpha - \beta$, one finds
    \begin{equation}
        \left\langle W_N(0) , \varphi_k \right\rangle = (-1)^{N} \sum_{\alpha = 0}^{N-1} \sum_{\beta = 0}^{N - 1 - \alpha}  c_{\beta, N - 1 - \alpha - \beta}^N \lambda_k^{N - 1 - \alpha - \beta} \left\langle \mu^{(2\beta)} \mu^{(2(N-1 + \alpha))} , \varphi_k^\prime \right\rangle.
    \end{equation}
    Finally, for $0 \leq \alpha, \beta \leq N - 1$ with $\alpha + \beta \leq N-1$, one has
    \begin{equation}
        \begin{split}
            c_{\beta, N - 1 - \alpha - \beta}^N & = (-1)^\beta \binom{N-1-\beta}{N - 1 - \alpha - \beta} + (-1)^\beta \binom{N-2-\beta}{N - 1 - \alpha - \beta} \mathbbm{1}_{\beta \leq N-2} \mathbbm{1}_{\alpha \geq 1} \\
            & = (-1)^\beta \binom{N-1-\beta}{\alpha} + (-1)^\beta \binom{N-2-\beta}{\alpha - 1} \mathbbm{1}_{\beta \leq N-2} \mathbbm{1}_{\alpha \geq 1} \\
            & = (-1)^\beta \Lucas{N-1-\beta}{\alpha},
        \end{split}
    \end{equation}
    and this completes the proof.
\end{proof}

\subsection{Non-vanishing Fourier coefficients}

The coercive quadratic estimate (Proposition \ref{prop:quad_estimates_IPP}) involves the remainder terms $u_1(T)^2$, ..., $u_N(T)^2$, which will be estimated using closed-loop estimates (Lemma \ref{lem:closed_loop}). We prove the following technical result, which will be essential for verifying the assumptions of that lemma. 

\begin{lemma}\label{lem:closed_loop_nonzero_moments}
    Let $N \geq 1$, $k \geq 1$, and $\mu \in H_{(0)}^{4N}(0,1)$ be such that $a_k^1(\mu) = \cdots = a_k^{N-1}(\mu) = 0$ and $a_k^N(\mu) \neq 0$. Then there exists $J \subset \mathbb{N}^\ast$, with $\vert J \vert \geq N$, such that $\langle \mu, \varphi_j \rangle \neq 0$ for all $j \in J$.
\end{lemma}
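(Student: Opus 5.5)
We argue by contradiction, following the finite-dimensional argument of \cite{KBFM24Unified}. Assume that $J := \{ j \geq 1 : \langle \mu, \varphi_j \rangle \neq 0 \}$ has cardinality $d \leq N-1$. Then $\mu = \sum_{j \in J} \langle \mu, \varphi_j \rangle \varphi_j$ is a finite sum of eigenfunctions. Hence $\mu \in D(A^\infty)$, and Proposition \ref{prop:Lie_brackets} applies. Set $P(X) := \prod_{j \in J} (X - \lambda_j) = X^d + c_{d-1} X^{d-1} + \cdots + c_0$. Since $M_n(0) = (-1)^n \mu^{(2n)} = A^n \mu$, Cayley--Hamilton gives $P(A)\mu = 0$, that is, $\sum_{r} c_r M_{n+r}(0) = 0$ for every $n \geq 0$ (with $c_d = 1$). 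Every vector $M_n(0)$ therefore lies in the $d$-dimensional space $S := \mathrm{span}\{\varphi_j, j \in J\}$.

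\textbf{Step 1: reduction to a symmetric array.} Introduce $B_{p,q} := \langle A^p \mu \, A^q \mu, \varphi_k^\prime \rangle$, which is symmetric in $(p,q)$. Since $\mu^{(2p)} = (-1)^p A^p \mu$, definition \eqref{eq:def:a_k_n} expresses $a_k^n(\mu)$ as an explicit linear combination, with coefficients polynomial in $\lambda_k$, of the $B_{\beta, n-1+\alpha}$. Cayley--Hamilton yields the linear recurrence $\sum_r c_r B_{p+r,q} = 0$ in each index. Alternatively, one can work directly with the antisymmetric array $b_{i,j} := \langle [M_i, M_j](0), \varphi_k \rangle$. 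The Jacobi identity $[f_0,[M_i,M_j]] = [M_{i+1},M_j] + [M_i,M_{j+1}]$, combined with $f_0(0) = 0$ and $\D f_0(0) X = X^{\prime\prime}$, gives
\begin{equation*}
    b_{i+1,j} + b_{i,j+1} = \lambda_k \, b_{i,j}.
\end{equation*}
By Proposition \ref{prop:Lie_brackets}, one also has $a_k^n(\mu) = -\frac{1}{2} b_{n-1,n}$.

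\textbf{Step 2: propagation.} By antisymmetry, $b_{i,i} = 0$. The Jacobi relation lets one express every $b_{i,j}$ with $i+j = m$ in terms of $b_{0,m}$ and of brackets of lower total order. Inducting on $m$ then shows that the hypotheses $b_{n-1,n} = 0$ for $n < N$ force all brackets of total order at most $2N-3$ to vanish. The goal is to show that, when $d \leq N-1$, these vanishings together with the recurrence inherited from $P$ also force $b_{N-1,N} = 0$, which contradicts $a_k^N(\mu) \neq 0$.

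\textbf{Main obstacle.} The difficulty is transferring the recurrence from $P$ to the brackets. The term $\D M_j(0)(M_i(0))$ is linear in $M_i(0)$, so the first slot inherits the recurrence directly. The second term of the bracket is not obviously linear in $M_j(0)$. To handle this, I would use formula \eqref{eq:proof:prop:Lie_brackets_4}, which writes $\langle \D M_n(0)(Y), \varphi_k\rangle$ as a combination of $\langle \mu^{(2p)} Y^{(2r)}, \varphi_k^\prime \rangle$. This rewrites each $b_{i,j}$ as a bilinear expression in the array $B_{p,q}$, to which the recurrence from $P$ does apply in both indices.

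\textbf{Fallback.} If this bookkeeping becomes unmanageable, I would switch to an analytic argument. Since $\Phi(t) = \sum_{j \in J} \langle \mu, \varphi_j \rangle e^{-\lambda_j t} \varphi_j$, the kernel satisfies $P(-\partial_s) H = 0$ exactly, so the expansion \eqref{eq:IPP_iterated_kernel_H} collapses after $d$ integrations by parts. Comparing the two expansions, with $N$ and with $d < N$ steps, on rescaled controls $u(t) = v(t/T)$ satisfying $u_1(T) = \cdots = u_N(T) = 0$ then forces $a_k^N(\mu) = 0$ as $T \to 0$.
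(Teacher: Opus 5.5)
\textbf{There is a genuine gap: the step that produces the contradiction is never carried out.}

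Your setup is correct and matches the paper: the contradiction hypothesis, the Cayley--Hamilton relation $\sum_{r=0}^{d}c_rM_{n+r}(0)=0$, the relation $b_{i+1,j}+b_{i,j+1}=\lambda_k b_{i,j}$ coming from the Jacobi identity, and $a_k^n(\mu)=-\frac12 b_{n-1,n}$. But the proposal stops exactly where the work is. You state as a \emph{goal} that the vanishing brackets together with the relation from $P$ force $b_{N-1,N}=0$, and you never prove it.

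Your diagnosis of the obstacle is also inaccurate. In $b_{i,j}=\langle \D M_j(0)(M_i(0)),\varphi_k\rangle-\langle \D M_i(0)(M_j(0)),\varphi_k\rangle$, neither index inherits the relation on its own. The first term is linear in $M_i(0)$ but depends on $j$ through $\D M_j(0)$. The second term is linear in $M_j(0)$ but depends on $i$ through $\D M_i(0)$. Rewriting everything in terms of $B_{p,q}$ does not tell you which combination of the $b_{i,j}$ vanishes.

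The fallback does not supply the step either. First, one has $P(\partial_s)H=0$, not $P(-\partial_s)H=0$, since differentiating $e^{-\lambda_j(\tau-s)}$ in $s$ gives $+\lambda_j$. Second, under the rescaling $u(t)=v(t/T)$, the $d$-step remainder $\int_0^T u_d(t)\int_0^t u_d(s)\,\partial_t^d\partial_s^d H\dd s\dd t$ is a priori of size $T^{2d+2}\gg T^{2N+1}\sim\Vert u_N\Vert_{L^2}^2$. So ``comparing the two expansions'' requires a cancellation argument that is essentially the content of the lemma.

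\textbf{The missing idea} is to use the relation in both indices at once, that is, to bracket two vector fields that both vanish at $0$.
\begin{itemize}
\item Since $d\le N-1$, set $B_1:=\sum_{r=0}^{d}c_rM_{N-1-d+r}$, so that $B_1(0)=0$.
\item Because $f_0(0)=0$, one also has $[f_0,B_1](0)=\sum_r c_rM_{N-d+r}(0)=0$.
\item Hence $[B_1,[f_0,B_1]](0)=\D [f_0,B_1](0)(B_1(0))-\D B_1(0)([f_0,B_1](0))=0$, that is, $\sum_{r,s}c_rc_s\,b_{N-1-d+r,\,N-d+s}=0$. Each of the two terms of every bracket is killed by the relation in one of the two indices.
\item Since $c_d=1$, the term $(r,s)=(d,d)$ is $b_{N-1,N}$, and every other term has total order at most $2N-2$.
\end{itemize}
You therefore need vanishing up to total order $2N-2$, not only $2N-3$ as you state, because $b_{N-2,N}$ occurs. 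This follows from your relation: $b_{N-2,N}=\lambda_k b_{N-2,N-1}-b_{N-1,N-1}=0$. More generally, on each even anti-diagonal the anchor $b_{p,p}=0$ propagates the vanishing of the previous one. Thus $b_{N-1,N}=0$, contradicting $a_k^N(\mu)\neq 0$.

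This is the paper's argument. Your monic normalization via $P$ even spares the paper's preliminary replacement of $B_1$ by $\ad_{f_0}^m(B_1)$ to make the top coefficient nonzero.
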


\begin{proof}
    For simplicity, we write $a_k^n := a_k^n(\mu)$ in this proof. Assume by contradiction that there exists $J \subset \mathbb{N}^\ast$, with $\vert J \vert \leq N-1$, such that 
    \begin{equation}
        \mu = \sum_{j \in J} \alpha_j \varphi_j,
    \end{equation}
    for some real coefficients $(\alpha_j)_{j \in J}$. In particular, $\mu \in D(A^\infty)$ and we know by Proposition \ref{prop:Lie_brackets} that $a_k^n$ can be expressed in terms of Lie brackets. We will show that the condition $a_k^1 = \cdots = a_k^{N-1} = 0$ implies $a_k^N = 0$, which gives a contradiction. The proof involves only a finite number of vectors from the basis $(\varphi_j)$, and we adapt the proof of the finite dimensional result \cite[Lemma 6.4]{KBFM24Unified}.
    
    Recall that for $y \in D(A^\infty)$, $n \geq 0$, and $N \geq 1$, we write $f_0(y) = y^{\prime \prime} - y y^\prime$, $f_1(y) = \mu$, $M_n = \ad_{f_0}^{n}(f_1)$, and $W_N = \left[M_{N-1}, M_N\right]$. Proposition \ref{prop:Lie_brackets} gives
    \begin{equation}\label{eq:proof:lem:closed_loop_nonzero_moments_1}
        a_k^N = - \frac{1}{2} \left\langle W_N(0) , \varphi_k \right\rangle.
    \end{equation}
    
    \textbf{Step 1.} \textit{We prove that $\left\langle \left[ M_\alpha, M_\beta \right](0), \varphi_k \right\rangle = 0$ for all $\alpha, \beta \geq 0 $ such that $\alpha + \beta \leq 2N-2$.}
    Equivalently, we prove by double induction on $n \in \llbracket 0, 2N-2 \rrbracket$ that
    \begin{equation}\label{eq:proof:lem:closed_loop_nonzero_moments_2}
        \left\langle \left[ M_\alpha, M_{\alpha + n} \right](0), \varphi_k \right\rangle = 0, \quad \text{for all $\alpha \geq 0$ such that $2 \alpha \leq 2 N - 2 - n$}.
    \end{equation}
    It holds true for $n = 0$. If $\alpha \geq 0$ satisfies $2 \alpha \leq 2 N - 3$, then 
    \begin{equation}
        \left\langle \left[ M_\alpha, M_{\alpha + 1} \right](0), \varphi_k \right\rangle = \left\langle W_{\alpha + 1}(0), \varphi_k \right\rangle = -2 a_k^{\alpha + 1} = 0,
    \end{equation}
    since $\alpha + 1 \leq N-1$, implying that \eqref{eq:proof:lem:closed_loop_nonzero_moments_2} also holds true for $n=1$. 
    
    Now, let $n \in \llbracket 1, 2N-3 \rrbracket$  be such that \eqref{eq:proof:lem:closed_loop_nonzero_moments_2} holds true for $n$ and $n-1$. Let $\alpha \geq 0$ be such that $2 \alpha \leq 2 N - 2 - (n+1)$. The Jacobi identity gives
    \begin{align}
        \left\langle \left[ M_\alpha, M_{\alpha + n + 1} \right](0), \varphi_k \right\rangle 
        & = \left\langle \left[ M_\alpha, [f_0, M_{\alpha + n}] \right](0), \varphi_k \right\rangle \\
        & = \left\langle \left[ [M_\alpha, f_0], M_{\alpha + n} \right](0), \varphi_k \right\rangle 
          + \left\langle \left[ f_0, [M_\alpha, M_{\alpha + n}] \right](0), \varphi_k \right\rangle \\
        & = - \left\langle \left[ M_{\alpha+1}, M_{\alpha + n} \right](0), \varphi_k \right\rangle 
          - \left\langle \D f_0(0) \left( [M_\alpha, M_{\alpha + n}](0) \right), \varphi_k \right\rangle, \label{eq:proof:lem:closed_loop_nonzero_moments_3}
    \end{align}
    since $f_0(0) = 0$.
    The first term of \eqref{eq:proof:lem:closed_loop_nonzero_moments_3} vanishes since we assume that \eqref{eq:proof:lem:closed_loop_nonzero_moments_2} holds true for $n-1$, and since $2(\alpha + 1) \leq 2 N - 2 - (n-1)$. Integration by parts in the second term gives
    \begin{equation}
        - \left\langle \D f_0(0) \left( [M_\alpha, M_{\alpha + n}](0) \right), \varphi_k \right\rangle = \lambda_k \left\langle [M_\alpha, M_{\alpha + n}](0), \varphi_k \right\rangle.
    \end{equation}
    Note that no boundary term appears since $[M_\alpha, M_{\alpha + n}](0) \in D(A^\infty)$. As we assume that \eqref{eq:proof:lem:closed_loop_nonzero_moments_2} holds true for $n$, and since $2 \alpha \leq 2 N - 2 - n$, this second term also vanishes. This proves that \eqref{eq:proof:lem:closed_loop_nonzero_moments_2} holds true for all $n \in \llbracket 0, 2N-2 \rrbracket$.
    
    \textbf{Step 2.} \textit{End of the proof.}
    Set $V = \textrm{span}( \varphi_j, j \in J )$. Since $M_n(0) = (-1)^n \mu^{(2n)}$ by \eqref{eq:proof:prop:Lie_brackets_2}, one has $M_n(0) \in V$ for all $n \geq 0$. The assumption $\vert J \vert \leq N-1$ thus implies that
    \begin{equation}
        \sum_{n = 0}^{N-1} \beta_n M_n(0) = 0,
    \end{equation}
    for some $\beta_0, \ldots \beta_{N-1} \in \mathbb{R}$, with at least one $\beta_n$ nonzero. Set $B_1=\sum_{n = 0}^{N-1} \beta_n M_n$, so that $B_1(0) = 0$. Up to replacing $B_1$ by $\ad_{f_0}^m(B_1)$ for some $m \geq 0$, we can assume that $\beta_{N-1} \neq 0$. Define
    \begin{equation}
        B_2 := \left[ B_1, [f_0, B_1] \right] = \left[ \sum_{n = 0}^{N-1} \beta_n M_{n}, \sum_{m = 0}^{N-1} \beta_m M_{m+1} \right] =: \beta_{N-1}^2 W_N + B_3.
    \end{equation}
    On the one hand, since $f_0(0) = B_1(0) = 0$, one has $B_2(0) = 0$. On the other hand, Step 1 gives $\langle B_3(0), \varphi_k \rangle = 0$. It implies $\langle W_N(0), \varphi_k \rangle = 0$, and thus $a_k^N = 0$, a contradiction.
\end{proof}
 
\section{Remainder estimates}\label{sec:remainder_estimates}

\subsection{Linear, quadratic and cubic estimates}

Here, we establish estimates that play a crucial role in the proof of our main results: they allow us to transfer a coercive estimate for the quadratic term $y_2$ into a drift estimate for the full controlled solution. Since we impose strong regularity assumptions on $\mu$ throughout this article, strong remainder estimates are available.

\begin{proposition}\label{prop:quad_cubic_estimate_drift_any_order}
    Let $k \geq 1$, $T>0$, $\mu \in H_{(0)}^4(0,1)$ and $u \in L^2(0,T)$. Recall that $y_1$ and $y_2$ are the solutions of \eqref{eq:Burgers_y_1_intro} and \eqref{eq:Burgers_y_2_intro}, respectively. Set $R_2(t) := y(t; 0, u) - y_1(t;u)$ and $R_3(t) := y(t; 0, u) - y_1(t;u) - y_2(t;u)$. There exists $C > 0$ which depends only on $\mu$ and $k$, such that 
    \begin{align}
        \left\Vert y_1 \right\Vert_{L^2((0,T); H^1)} \leq C \Vert u_1 \Vert_{L^2} \quad \text{ and } \quad
        \left\Vert y_2 \right\Vert_{L^2((0,T); H^1)} \leq C \Vert u_1 \Vert_{L^2}^2. \label{eq:prop:quad_cubic_estimate_drift_any_order_2}
    \end{align}
    In addition, there exists $\delta_0 > 0$ which depends only on $\mu$ and $k$ such that if $\Vert u_1 \Vert_{L^2} \leq \delta_0$, then
    \begin{align}
        \left\Vert R_2 \right\Vert_{L^2((0,T); H^1)} + \left\Vert R_2(T) \right\Vert_{H^{-1}} \leq C \Vert u_1 \Vert_{L^2}^2 \quad \text{ and } \quad
        \left\vert \left\langle R_3(T), \varphi_k \right\rangle \right\vert \leq C \Vert u_1 \Vert_{L^2}^3. \label{eq:prop:quad_cubic_estimate_drift_any_order_4}
    \end{align}
\end{proposition}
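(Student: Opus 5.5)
The key device is to write the linear part in terms of $u_1$ rather than $u$. Since $\mu \in H_{(0)}^4 \subset D(A)$, one integration by parts in time in the Duhamel formula \eqref{eq:y_1_Fourier_intro} gives
\begin{equation}
    y_1(t) = u_1(t)\mu - \int_0^t u_1(\tau) S(t-\tau) A\mu \dd \tau .
\end{equation}
Equivalently, $z := y_1 - u_1 \mu$ solves the heat equation with source $-u_1 A\mu$ (note $A\mu \in H_{(0)}^2$) and zero initial data. The standard parabolic energy estimate (Lemma \ref{lem:well-posedness_heat_L1L2} or its $L^2H^1$ analogue from the appendix) then gives $\Vert z \Vert_{L^2H^1} \le C\Vert u_1 A\mu\Vert_{L^2 H^{-1}} \le C \Vert u_1\Vert_{L^2}$, and trivially $\Vert u_1\mu\Vert_{L^2H^1} \le C\Vert u_1\Vert_{L^2}$. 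Combining these yields the first bound in \eqref{eq:prop:quad_cubic_estimate_drift_any_order_2}. I would also record that $\Vert y_1\Vert_{L^\infty L^2}$ and $\Vert y_1 \Vert_{L^4 L^\infty}$-type quantities are controlled by $\Vert u_1\Vert_{L^2}$ together with pointwise-in-time bounds. Some care is needed here, since $u_1 \in H^1$ only, so $\sup|u_1|$ is not controlled by $\Vert u_1\Vert_{L^2}$. I therefore plan to work only with $L^2_t$-based norms and with the bilinear estimate below.

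For $y_2$, I would write the source as $y_1\partial_x y_1 = \frac12 \partial_x(y_1^2)$ and use the $H^{-1}$-source energy estimate:
\begin{equation}
    \Vert y_2\Vert_{L^2H^1} \le C\Vert y_1^2\Vert_{L^2L^2} \le C \Vert y_1\Vert_{L^\infty L^2}\Vert y_1\Vert_{L^2 L^\infty}.
\end{equation}
The factor $\Vert y_1\Vert_{L^2L^\infty}\le C\Vert y_1\Vert_{L^2H^1}$ is fine, but $\Vert y_1\Vert_{L^\infty L^2}$ is not bounded by $\Vert u_1\Vert_{L^2}$ because of the term $u_1\mu$. The remedy is to split $y_1 = u_1\mu + z$ and expand $y_1^2$:
\begin{itemize}
    \item the term $u_1^2\mu^2$ gives $\partial_x(u_1^2\mu^2)=u_1^2(\mu^2)'$, and this time-dependent source is handled by a further time integration by parts, or directly in Fourier by the explicit kernel $\int_0^t e^{-\lambda_j(t-\tau)}u_1^2\dd\tau$, whose $L^2_t$ norm weighted by $\lambda_j^{1/2}$ is bounded by $\Vert u_1\Vert_{L^2}^2$ using $\lambda_j^{1/2}\Vert e^{-\lambda_j \cdot}\Vert_{L^2} \lesssim 1$ (Young's inequality with $u_1^2\in L^1$);
    \item the cross and $z^2$ terms use $\Vert z\Vert_{L^\infty L^2}\le C\Vert u_1\Vert_{L^2}$, which follows from the energy estimate with source $u_1 A\mu \in L^2H^{-1}$.
\end{itemize}
This is the main technical obstacle: choosing norms that are genuinely quadratic in $\Vert u_1\Vert_{L^2}$ without ever invoking $\Vert u\Vert_{L^2}$.

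For $R_2$, I would write the equation $\partial_t R_2 - \partial_x^2 R_2 + \frac12\partial_x\bigl((y_1+R_2)^2\bigr)=0$ with $R_2(0)=0$ and run an energy estimate in $L^\infty L^2\cap L^2H^1$. The term $\frac12\partial_x(y_1^2)$ is treated as in the $y_2$ bound. The cross term $\partial_x(y_1R_2)$ is absorbed via
\begin{equation}
    \int y_1R_2\,\partial_xR_2 \le \Vert y_1\Vert_{L^\infty}\Vert R_2\Vert \, \Vert \partial_xR_2\Vert,
\end{equation}
followed by Gronwall with $\int_0^T\Vert y_1\Vert_{L^\infty}^2 \le C\Vert u_1\Vert_{L^2}^2\le C\delta_0^2$. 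The term $R_2^2$ contributes $\int R_2^2\partial_x R_2 = 0$ under the Dirichlet boundary conditions. This gives $\Vert R_2\Vert_{L^2H^1}+\Vert R_2\Vert_{L^\infty L^2}\le C\Vert u_1\Vert_{L^2}^2$, and hence the $H^{-1}$ bound at time $T$.

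Finally, $R_3$ solves $\partial_t R_3-\partial_x^2R_3 = -\frac12\partial_x\bigl(2y_1R_2+R_2^2\bigr)$. Testing the Duhamel formula against $\varphi_k$ gives
\begin{equation}
    |\langle R_3(T),\varphi_k\rangle|\le C\int_0^T \bigl(|\langle y_1R_2,\varphi_k'\rangle|+\Vert R_2\Vert_{L^2}^2\bigr) \le C\bigl(\Vert y_1\Vert_{L^2L^2}\Vert R_2\Vert_{L^2L^2}+\Vert R_2\Vert_{L^2L^2}^2\bigr),
\end{equation}
which is $O(\Vert u_1\Vert_{L^2}^3)$ once $\Vert u_1\Vert_{L^2}\le\delta_0\le1$ and using the previous bounds.
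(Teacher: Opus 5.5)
Your proof is correct, but it departs from the paper's in two places.

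\textbf{The $y_2$ bound.} The paper never uses an $L^2_tH^{-1}$ source estimate. It applies Lemma \ref{lem:well-posedness_heat_L1L2} with the source in $L^1((0,T);L^2)$ and uses that $H^1(0,1)$ is an algebra, so $\Vert \partial_x(y_1^2)\Vert_{L^1(L^2)} \lesssim \Vert y_1\Vert_{L^2(H^1)}^2 \lesssim \Vert u_1\Vert_{L^2}^2$ in one line. The ``main technical obstacle'' you identify (that $\Vert y_1\Vert_{L^\infty(L^2)}$ is not controlled by $\Vert u_1\Vert_{L^2}$) comes only from measuring the source in $L^2_tH^{-1}$. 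Your splitting $y_1=u_1\mu+z$ does resolve it. In fact, your mode-by-mode treatment of the $u_1^2\mu^2$ piece is exactly the $L^1_tL^2_x$ estimate, and it applies verbatim to all of $y_1^2$.

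\textbf{The remainders.} The paper rewrites $R_2$ and $R_3$ as forced Burgers equations (with $g=y_1$, $f=-\frac12 y_1^2$, resp.\ $g=y_1+y_2$, $f=-y_1y_2-\frac12 y_2^2$) and invokes Lemma \ref{lem:forced_burgers}, whose proof is omitted. Your direct energy--Gronwall argument for $R_2$ reproves the needed case of that lemma: the cancellation $\int R_2^2\partial_xR_2=0$, the Gronwall factor $e^{C\delta_0^2}$, and Poincar\'e to keep constants independent of $T$. One small point: in the energy estimate, the $u_1^2\mu^2$ contribution should be paired as $u_1^2\langle(\mu^2)',R_2\rangle$ and closed with $\sup_t\Vert R_2(t)\Vert_{L^2}$, rather than ``as in the $y_2$ bound.'' For $R_3$, you observe that it solves a \emph{linear} heat equation with source $-\partial_x(y_1R_2+\frac12R_2^2)$. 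Testing Duhamel against $\varphi_k$ then gives $\vert\langle R_3(T),\varphi_k\rangle\vert\lesssim\Vert y_1\Vert_{L^2(L^2)}\Vert R_2\Vert_{L^2(L^2)}+\Vert R_2\Vert_{L^2(L^2)}^2$. This bypasses the third estimate of Lemma \ref{lem:forced_burgers} entirely.

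\textbf{Trade-off.} The paper's route is more modular, since the same lemma also underlies the decoupling estimate. Yours is self-contained and more elementary for $R_3$, at the cost of a needlessly roundabout $y_2$ step. For $y_1$ the two arguments agree, except that you use an energy estimate for $z=y_1-u_1\mu$ where the paper uses Young's inequality with $\Vert\partial_t\Phi\Vert_{L^1(H^1)}\lesssim1$.
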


\begin{proof}
    In this proof, the symbol $\lesssim$ denotes an inequality with a constant that may depend on $\mu$ and $k$. First, an integration by parts gives
    \begin{equation}\label{eq:proof:prop:quad_cubic_estimate_drift_any_order_1}
        y_1(t) = u_1(t) \mu + \int_0^t u_1(s) \partial_t \Phi (t-s) \dd s,
    \end{equation}
    where $\Phi(t) := S(t) \mu$, with $S$ the heat semigroup. The assumption $\mu \in H_{(0)}^4(0,1)$ is more than sufficient to ensure that $\Vert \partial_t \Phi \Vert_{L^1((0,T);H^1)} \lesssim 1$, yielding
    \begin{equation}\label{eq:proof:prop:quad_cubic_estimate_drift_any_order_2}
        \left\Vert y_1 \right\Vert_{L^2((0,T); H^1)} \leq \Vert u_1 \Vert_{L^2} \Vert \mu \Vert_{H^1} + \Vert u_1 \Vert_{L^2} \Vert \partial_t \Phi \Vert_{L^1((0,T);H^1)} \lesssim \Vert u_1 \Vert_{L^2},
    \end{equation} 
    where we have used Young's inequality. 
    
    Second, Lemma \ref{lem:well-posedness_heat_L1L2} gives 
    \begin{equation}
        \left\Vert y_2 \right\Vert_{L^2((0,T); H^1)} \leq \left\Vert \partial_x \left( (y_1)^2 \right) \right\Vert_{L^1((0,T); L^2)} \lesssim \left\Vert y_1 \right\Vert_{L^2((0,T); H^1)}^2 \lesssim \Vert u_1 \Vert_{L^2}^2,
    \end{equation} 
    by \eqref{eq:proof:prop:quad_cubic_estimate_drift_any_order_2}, and since $H^1(0,1)$ is an algebra. 
    
    Third, note that $R_2$ is the solution of the forced Burgers equation
    \begin{equation}
        \partial_t R_2 - \partial_x^2 R_2 + R_2 \partial_x R_2 + \partial_x \left( y_1 R_2 \right) = - \partial_x \left[ \frac{(y_1)^2}{2} \right].
    \end{equation} 
    Hence, we can apply Lemma \ref{lem:forced_burgers}, with $g := y_1$ and $f := -\frac{(y_1)^2}{2}$. Since 
    \begin{equation}
        \Vert f \Vert_{L^1((0,T); H^1)} \lesssim \left\Vert y_1 \right\Vert_{L^2((0,T); H^1)}^2 \lesssim \Vert u_1 \Vert_{L^2}^2,
    \end{equation}
    and $\Vert g \Vert_{L^2((0,T); H^1)} = \left\Vert y_1 \right\Vert_{L^2((0,T); H^1)} \lesssim \Vert u_1 \Vert_{L^2}$, it proves that 
    \begin{equation}
        \left\Vert R_2 \right\Vert_{L^2((0,T);H^1)} + \left\Vert R_2(T) \right\Vert_{H^{-1}} \lesssim \Vert f \Vert_{L^1((0,T); H^1)} \lesssim \Vert u_1 \Vert_{L^2}^2,
    \end{equation}
    if $\Vert u_1 \Vert_{L^2}$ is sufficiently small.
    
    Fourth, $R_3$ is the solution of the forced Burgers equation
    \begin{equation}
        \partial_t R_3 - \partial_x^2 R_3 + R_3 \partial_x R_3 + \partial_x \left[ \left( y_1 + y_2 \right) R_3 \right] = - \partial_x \left[ y_1 y_2 + \frac{(y_2)^2}{2} \right].
    \end{equation} 
    Hence, we apply Lemma \ref{lem:forced_burgers}, this time with $g := y_1+y_2$ and $f := - y_1 y_2 - \frac{(y_2)^2}{2}$. As above, one has 
    \begin{equation}
        \Vert f \Vert_{L^1((0,T); H^1)} \lesssim \Vert u_1 \Vert_{L^2}^3 + \Vert u_1 \Vert_{L^2}^4    
    \end{equation}
     and $\Vert g \Vert_{L^2((0,T); H^1)} \lesssim \Vert u_1 \Vert_{L^2}+\Vert u_1 \Vert_{L^2}^2$, implying 
    \begin{equation}
        \left\vert \left\langle R_3(T), \varphi_k \right\rangle \right\vert \lesssim \Vert f \Vert_{L^1(L^1)} + \left( \Vert g \Vert_{L^2(L^2)} + \left\Vert f \right\Vert_{L^1(L^2)} \right) \left\Vert f \right\Vert_{L^1(L^2)} \lesssim \Vert u_1 \Vert_{L^2}^3,
    \end{equation}
    if $\Vert u_1 \Vert_{L^2}$ is sufficiently small.
\end{proof}
 
\subsection{Closed-loop estimates}

We prove the following closed-loop estimates. Recall that $u_1$, $u_2$, $\ldots$ denote the iterated primitives of $u \in L^2(0,T)$.

\begin{lemma}\label{lem:closed_loop}
    Let $\mu \in H^{-1}(0,1)$, and $N \geq 1$. Assume that there exists $J \subset \mathbb{N}^\ast$ such that $\vert J \vert = N$ and $\langle \mu, \varphi_j \rangle \neq 0$ for all $j \in J$. Then, there exists $C > 0$ such that for all $T \in \left( 0, 1 \right)$ and $u \in L^2(0,T)$, one has
    \begin{equation}\label{eq:lem:closed_loop_2}
        \sum_{n = 1}^N \left\vert u_n(T) \right\vert \leq C \left( \sqrt{T} \left\Vert u_N \right\Vert_{L^2} +  \left\Vert y_1(T) \right\Vert_{H^{-1}} \right). 
    \end{equation}
\end{lemma}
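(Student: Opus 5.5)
The plan is to work mode by mode in the Fourier basis, integrate the Duhamel formula by parts $N$ times to bring out the values $u_1(T), \dots, u_N(T)$, and then invert a Vandermonde system built from the $N$ indices in $J$. By \eqref{eq:y_1_Fourier_intro}, for every $j \geq 1$,
\begin{equation*}
    \left\langle y_1(T), \varphi_j \right\rangle = \left\langle \mu, \varphi_j \right\rangle \int_0^T e^{-\lambda_j (T-\tau)} u(\tau) \dd \tau ,
\end{equation*}
where for $\mu \in H^{-1}(0,1)$ the brackets denote the $H^{-1}$--$H^1_0$ duality pairing.

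\textbf{Step 1: integration by parts in time.} Integrating by parts repeatedly, using $u_n(0) = 0$ and $u_n' = u_{n-1}$ (with $u_0 = u$), gives for every $\lambda > 0$
\begin{equation*}
    \int_0^T e^{-\lambda (T-\tau)} u(\tau) \dd \tau = \sum_{n=1}^N (-\lambda)^{n-1} u_n(T) + (-\lambda)^N \int_0^T e^{-\lambda (T-\tau)} u_N(\tau) \dd \tau .
\end{equation*}
By the Cauchy--Schwarz inequality, the last term is bounded by $\lambda^N \sqrt{T} \Vert u_N \Vert_{L^2}$.

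\textbf{Step 2: the linear system.} Write $J = \{ j_1, \dots, j_N \}$. Dividing by $\langle \mu, \varphi_j \rangle \neq 0$ for each $j \in J$, we obtain the system
\begin{equation*}
    \sum_{n=1}^N (-\lambda_j)^{n-1} u_n(T) = \frac{\left\langle y_1(T), \varphi_j \right\rangle}{\left\langle \mu, \varphi_j \right\rangle} + r_j, \qquad \vert r_j \vert \leq \lambda_j^N \sqrt{T} \Vert u_N \Vert_{L^2}, \qquad j \in J .
\end{equation*}
Its matrix is the Vandermonde matrix $V = \big( (-\lambda_j)^{n-1} \big)_{j \in J,\, 1 \leq n \leq N}$. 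The eigenvalues $\lambda_j = (j\pi)^2$ are pairwise distinct, so $V$ is invertible, and $V^{-1}$ depends only on $J$, hence not on $T$ or $u$.

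\textbf{Step 3: conclusion.} Applying $V^{-1}$ bounds $\sum_{n=1}^N \vert u_n(T) \vert$ by $C \max_{j \in J} \big( \vert \langle y_1(T), \varphi_j \rangle \vert + \sqrt{T} \Vert u_N \Vert_{L^2} \big)$, where $C$ depends on $\mu$, $J$ and $N$. Since $\Vert z \Vert_{H^{-1}}^2 = \sum_{j} \lambda_j^{-1} \langle z, \varphi_j \rangle^2$, we have $\vert \langle y_1(T), \varphi_j \rangle \vert \leq \sqrt{\lambda_j} \, \Vert y_1(T) \Vert_{H^{-1}}$, and $\sqrt{\lambda_j}$ is bounded over the finite set $J$. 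This yields \eqref{eq:lem:closed_loop_2}.

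I do not expect a real obstacle. The only point needing care is that the constant must not depend on $T \in (0,1)$; this holds because the Vandermonde inverse and the factors $\lambda_j^N$ involve only finitely many fixed indices. The assumption $\mu \in H^{-1}$ is used only to make sense of $\langle \mu, \varphi_j \rangle$ and of the Fourier formula for $y_1$.
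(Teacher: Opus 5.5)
Your proposal is correct and follows essentially the same route as the paper's proof: you integrate by parts $N$ times in the Duhamel formula for $\langle y_1(T), \varphi_j \rangle$, bound the remainder $(-\lambda_j)^N \int_0^T e^{-\lambda_j (T-t)} u_N(t) \dd t$ by Cauchy--Schwarz, and invert the Vandermonde matrix built on the distinct nodes $(-\lambda_j)_{j \in J}$. Your explicit bound $\vert \langle y_1(T), \varphi_j \rangle \vert \leq \sqrt{\lambda_j} \Vert y_1(T) \Vert_{H^{-1}}$ and your remark that the constant depends only on the finite set $J$ (hence not on $T$) make precise what the paper leaves implicit.
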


\begin{proof}
    For $j \geq 1$, starting from the definition of $y_1(T)$ and integrating by parts, one finds
    \begin{equation}
        \begin{split}
            \left\langle y_1(T), \varphi_j \right\rangle 
             & = \left\langle \mu, \varphi_j \right\rangle \left( \int_0^T e^{- \lambda_j (T-t)} u(t) \dd t \right) \\
             & = \left\langle \mu, \varphi_j \right\rangle \left( \sum_{n = 1}^N (-\lambda_j)^{n-1} u_n(T) + (-\lambda_j)^N \int_0^T e^{- \lambda_j (T-t)} u_N(t) \dd t \right).
        \end{split}
    \end{equation}
    For $j \in J$, using the Cauchy-Schwarz inequality again, this gives
    \begin{equation}
        \begin{split}
            \left\vert \sum_{n = 1}^N (-\lambda_j)^{n-1} u_n(T) \right\vert 
            & \leq C \left( \left\vert \int_0^T e^{- \lambda_j (T-t)} u_N(t) \dd t \right\vert + \left\vert \left\langle y_1(T), \varphi_j \right\rangle \right\vert \right) \\
            & \leq C^\prime \left( \sqrt{T} \left\Vert u_N \right\Vert_{L^2} + \left\Vert y_1(T) \right\Vert_{H^{-1}} \right),
        \end{split}
    \end{equation}
    with $C, C^\prime > 0$ depending on $\mu$ and $N$. Using the invertibility of the Vandermonde matrix of coefficients $(-\lambda_j)_{j \in J}$, one obtains \eqref{eq:lem:closed_loop_2}.
\end{proof}

\subsection{Decoupling estimate for initial data and control}

We now provide the decoupling estimates used in the proof of our main result. The proof is an elementary consequence of Lemma \ref{lem:forced_burgers} and of standard well-posedness results for the Burgers equation; we omit it and refer to \cite{BurgersFractional}. 

\begin{lemma}\label{lem:decoupling_estimate}
    Let $T \in (0,1]$, $y_0 \in L^2(0,1)$, $k \geq 1$, and $\mu \in H^{-1}(0,1)$. Let $u$ be such that the solutions $y(t; y_0, u)$ and $y(t; 0, u)$ of the Burgers equation \eqref{eq:Burgers_intro} are well-defined and belong to $L^2((0,T); H^1(0,1))$. Set $R(t; y_0, u) :=  y(t; y_0, u) - y(t; y_0, 0) - y(t; 0, u)$, where the solutions of the Burgers equation implicitly depend on $\mu$. We introduce the shorthand
    \begin{equation}
        y^u(t) :=  y(t; 0, u).
    \end{equation}
    Then, there exist constants $\varepsilon_0, C > 0$, which may depend only on $k$, such that if 
    \begin{equation}\label{eq:lem:decoupling_estimate_0}
        \Vert y_0 \Vert_{L^2} + \left\Vert y^u \right\Vert_{L^2((0, T);H^1)} \leq \varepsilon_0, 
    \end{equation}
    then
    \begin{equation}\label{eq:lem:decoupling_estimate_1}
        \left\Vert R(T; y_0, u) \right\Vert_{H^{-1}}^2 \leq C \sqrt{T} \Vert y_0 \Vert_{L^2}^2 \left\Vert y^u \right\Vert_{L^2((0,T);L^2)}^2,
    \end{equation}
    and 
    \begin{equation}\label{eq:lem:decoupling_estimate_2}
        \left\vert \left\langle R(T; y_0, u), \varphi_k \right\rangle \right\vert \leq C T^{\frac{1}{4}} \Vert y_0 \Vert_{L^2} \left\Vert y^u \right\Vert_{L^2((0,T);L^2)}.
    \end{equation}
\end{lemma}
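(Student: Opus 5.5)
Throughout, write $z(t) := y(t; y_0, 0)$ for the free solution and $y^u(t) := y(t;0,u)$. The remainder $R := y(\cdot\,; y_0,u) - z - y^u$ starts from $R(0)=0$. Subtracting the three Burgers equations, the nonlinearity $\frac12\partial_x(y^2)$ with $y = z + y^u + R$ leaves
\begin{equation}
    \partial_t R - \partial_x^2 R + R\partial_x R + \partial_x\left[ (z + y^u) R \right] = - \partial_x\left( z\, y^u \right),
\end{equation}
with homogeneous Dirichlet conditions. The control term $u\mu$ cancels exactly. This is a forced Burgers equation of the type covered by Lemma \ref{lem:forced_burgers}, with $g := z + y^u$ and $f := -z\,y^u$. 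The plan is to apply that lemma and estimate $f$ in the weak norms it requires.

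\textbf{Step 1: bounds on $g$.} Standard energy estimates for the Burgers equation give
\begin{equation}
    \Vert z\Vert_{L^\infty(L^2)} + \Vert z\Vert_{L^2(H^1)} \lesssim \Vert y_0\Vert_{L^2}.
\end{equation}
Together with \eqref{eq:lem:decoupling_estimate_0}, this gives $\Vert g\Vert_{L^2(H^1)} \lesssim \varepsilon_0$. The smallness condition of Lemma \ref{lem:forced_burgers} therefore holds once $\varepsilon_0$ is small, so that lemma yields $\Vert R(T)\Vert_{H^{-1}}$ and $\vert\langle R(T),\varphi_k\rangle\vert$ bounded by norms of $f$.

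\textbf{Step 2: the $H^{-1}$ estimate.} The $H^{-1}$ bound should come from $\Vert R(T)\Vert_{H^{-1}} \lesssim \Vert f\Vert_{L^2(L^1)}$ or from $\Vert f\Vert_{L^1(L^2)}$, via duality with the adjoint heat equation. I would bound
\begin{equation}
    \Vert z\,y^u\Vert_{L^1(L^2)} \le \int_0^T \Vert z\Vert_{L^\infty}\Vert y^u\Vert_{L^2}\dd t \le \Vert z\Vert_{L^4(L^\infty)}\, T^{1/4}\, \Vert y^u\Vert_{L^2(L^2)},
\end{equation}
using Hölder in time with exponents $4$, $4$ and $2$. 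Gagliardo--Nirenberg, $\Vert z\Vert_{L^\infty} \lesssim \Vert z\Vert_{L^2}^{1/2}\Vert z\Vert_{H^1}^{1/2}$, combined with the energy bounds gives $\Vert z\Vert_{L^4(L^\infty)} \lesssim \Vert y_0\Vert_{L^2}$. Hence
\begin{equation}
    \Vert f\Vert_{L^1(L^2)} \lesssim T^{1/4}\Vert y_0\Vert_{L^2}\Vert y^u\Vert_{L^2(L^2)},
\end{equation}
and squaring yields \eqref{eq:lem:decoupling_estimate_1}.

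\textbf{Step 3: the $\varphi_k$ estimate.} For \eqref{eq:lem:decoupling_estimate_2}, I would use the bound on $\vert\langle R(T),\varphi_k\rangle\vert$ in Lemma \ref{lem:forced_burgers}, namely
\begin{equation}
    \vert\langle R(T),\varphi_k\rangle\vert \lesssim \Vert f\Vert_{L^1(L^1)} + \left(\Vert g\Vert_{L^2(L^2)} + \Vert f\Vert_{L^1(L^2)}\right)\Vert f\Vert_{L^1(L^2)}.
\end{equation}
\begin{itemize}
    \item The linear term satisfies $\Vert f\Vert_{L^1(L^1)} \le \Vert z\Vert_{L^2(L^2)}\Vert y^u\Vert_{L^2(L^2)} \le \sqrt{T}\,\Vert y_0\Vert_{L^2}\Vert y^u\Vert_{L^2(L^2)}$.
    \item The quadratic term is at most a constant times $\Vert f\Vert_{L^1(L^2)}$, since the bracket is $\lesssim 1$ by Step 1 and by the smallness of $\Vert f\Vert_{L^1(L^2)}$.
\end{itemize}
Both contributions are therefore $\lesssim T^{1/4}\Vert y_0\Vert_{L^2}\Vert y^u\Vert_{L^2(L^2)}$, using $T\le 1$.

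\textbf{Main obstacle.} The delicate point is checking that the statement of Lemma \ref{lem:forced_burgers} actually accepts an $L^1(L^2)$-type forcing and not only an $L^1(H^1)$ one. If it does not, I would go back to its proof, a Duhamel formula plus an energy estimate, and redo the $H^{-1}$ energy estimate directly. Concretely, I would test the equation for $A^{-1}R$ against $R$: the forcing $\partial_x(z y^u)$ then pairs as $\langle z y^u, \partial_x A^{-1} R\rangle$, which is controlled by $\Vert zy^u\Vert_{L^2}\Vert R\Vert_{L^2}$ and absorbed into the dissipation. The nonlinear terms are absorbed using the smallness of $g$, followed by Grönwall.
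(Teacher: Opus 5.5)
Your proposal is correct and follows the route the paper indicates. The paper omits this proof, calling it an elementary consequence of Lemma \ref{lem:forced_burgers} and of standard energy bounds for the free Burgers flow; your application of that lemma with $g = y(\cdot;y_0,0)+y^u$ and $f=-y(\cdot;y_0,0)\,y^u$, combined with your $L^4(L^\infty)$ interpolation that produces the factor $T^{1/4}$, is exactly this argument.

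Your ``main obstacle'' dissolves, but you did skip one hypothesis check. Lemma \ref{lem:forced_burgers} asks for $\Vert f\Vert_{L^1(H^1)}\le\delta$ only as an assumption, and its conclusions are already stated in the weaker $L^1(L^2)$ and $L^1(L^1)$ norms. That assumption holds because $\Vert z\,y^u\Vert_{L^1(H^1)}\lesssim \Vert z\Vert_{L^2(H^1)}\Vert y^u\Vert_{L^2(H^1)}\lesssim\varepsilon_0^2$, using that $H^1$ is an algebra. So the lemma applies directly and no fallback energy estimate is needed.
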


\section{Obstructions to controllability}\label{sec:proof_obstructions}

\subsection{Drift estimates}

We now prove Theorem \ref{thm:drift_any_order}. 
Let $N \geq 2$, $k \geq 1$, and $\mu \in H_{(0)}^{4N}(0,1)$ be such that $a_k^1(\mu) = \cdots = a_k^{N-1}(\mu) = 0$, $a_k^N(\mu) \neq 0$, and $\langle \mu, \varphi_k \rangle = 0$. In this proof, the symbol $\lesssim$ is used for constants that may depend on $\mu$, $N$ and $k$. We write $a_k^N$ instead of $a_k^N(\mu)$.

Let $y_0 \in L^2(0, 1)$, $T \in (0,1)$ and $u \in L^2(0,T)$ with $\Vert u \Vert_{L^2} \leq 1$. 
Write $t \mapsto R_3(t; u) := y(t; 0, u) - y_1(t; u) - y_2(t; u)$ for the cubic remainder, and $t \mapsto R(t; y_0, u) := y(t; y_0, u) - y(t; y_0, 0) - y(t; 0, u)$ for the remainder of the decoupling estimate (Lemma \ref{lem:decoupling_estimate}). Using $\langle \mu, \varphi_k \rangle = 0$ and the quadratic coercive estimate (Proposition \ref{prop:quad_estimates_IPP}), one finds 
\begin{equation}
    \begin{split}
         & \left\vert \left\langle y(T; y_0, u) - y(T; y_0, 0), \varphi_k \right\rangle - a_k^N \Vert u_N \Vert_{L^2}^2 \right\vert \\
        \leq \ & \left\vert \left\langle y_2(T; u), \varphi_k \right\rangle - a_k^N \Vert u_N \Vert_{L^2}^2 \right\vert 
         + \left\vert \left\langle R_3(T;u), \varphi_k \right\rangle \right\vert + \left\vert \left\langle R(T;y_0,u), \varphi_k \right\rangle \right\vert \\
        \lesssim \ & T \Vert u_N \Vert_{L^2}^2 + \left\vert u_1(T) \right\vert^2 + \cdots + \left\vert u_N(T) \right\vert^2 
         + \left\vert \left\langle R_3(T;u), \varphi_k \right\rangle \right\vert + \left\vert \left\langle R(T;y_0,u), \varphi_k \right\rangle \right\vert.
    \end{split}
\end{equation}
By Lemma \ref{lem:closed_loop_nonzero_moments}, there exists $J \subset \mathbb{N}^\ast$ such that $\langle \mu, \varphi_j \rangle \neq 0$ for all $j \in J$, and $\vert J \vert \geq N$, allowing us to use closed-loop estimates (Lemma \ref{lem:closed_loop}), which yield
\begin{equation}
    \left\vert u_1(T) \right\vert^2 + \cdots + \left\vert u_N(T) \right\vert^2 
    \lesssim T \left\Vert u_N \right\Vert_{L^2}^2 +  \left\Vert y_1(T;u) \right\Vert_{H^{-1}}^2. 
\end{equation}
Writing $y_1$ in terms of $y$, $R$ and $R_2$, one obtains
\begin{equation}
    \begin{split}
         & \left\vert \left\langle y(T; y_0, u) - y(T; y_0, 0), \varphi_k \right\rangle - a_k^N \Vert u_N \Vert_{L^2}^2 \right\vert \\
        \lesssim \ & T \Vert u_N \Vert_{L^2}^2 + \left\Vert  y(T; y_0, u) - y(T; y_0, 0) \right\Vert_{H^{-1}}^2 + \left\Vert R_2(T;u) \right\Vert_{H^{-1}}^2 + \left\Vert R(T;y_0,u) \right\Vert_{H^{-1}}^2 \\ 
        & \quad + \left\vert \left\langle R_3(T;u), \varphi_k \right\rangle \right\vert + \left\vert \left\langle R(T;y_0,u), \varphi_k \right\rangle \right\vert.
    \end{split}
\end{equation}
Now, we use the remainder estimates (Proposition \ref{prop:quad_cubic_estimate_drift_any_order}) and the decoupling estimate (Lemma \ref{lem:decoupling_estimate}). 

First, by Proposition \ref{prop:quad_cubic_estimate_drift_any_order}, there exists $\delta_0 > 0$ such that if $\Vert u_1 \Vert_{L^2} \leq \delta_0$, then 
\begin{equation}
    \left\Vert R_2(T;u) \right\Vert_{H^{-1}}^2 + \left\vert \left\langle R_3(T;u), \varphi_k \right\rangle \right\vert \lesssim \Vert u_1 \Vert_{L^2}^3.
\end{equation}
Note that 
\begin{equation}\label{eq:proof:thm:drift_any_order_1}
    \Vert u_1 \Vert_{L^2}^2 \leq \int_0^T t \left( \int_0^t u(s)^2 \dd s \right) \dd t \leq \frac{T^2}{2} \Vert u \Vert_{L^2}^2 \leq \frac{T^2}{2},
\end{equation}
so that the condition $\Vert u_1 \Vert_{L^2} \leq \delta_0$ is implied by $T \leq \frac{\delta_0}{\sqrt{2}}$. 

Second, by Lemma \ref{lem:decoupling_estimate}, there exists $\varepsilon_0$ such that if $\Vert y_0 \Vert_{L^2} + \left\Vert y^u \right\Vert_{L^2(H^1)} \leq \varepsilon_0$, where $y^u(t) := y(t; 0, u)$, then
\begin{equation}
    \left\Vert R(T;y_0,u) \right\Vert_{H^{-1}}^2 + \left\vert \left\langle R(T;y_0,u), \varphi_k \right\rangle \right\vert \lesssim T^{\frac{1}{4}} \Vert y_0 \Vert_{L^2} \left\Vert y^u \right\Vert_{L^2(L^2)}.
\end{equation}
Since Proposition \ref{prop:quad_cubic_estimate_drift_any_order} implies 
\begin{equation}
    \Vert y^u \Vert_{L^2(H^1)} \leq \Vert y_1 \Vert_{L^2(H^1)} + \Vert R_2 \Vert_{L^2(H^1)} \lesssim \Vert u_1 \Vert_{L^2},
\end{equation}
there exists $\delta_1 > 0$ such that if $\Vert y_0 \Vert_{L^2} \leq \frac{\varepsilon_0}{2}$ and $\Vert u_1 \Vert_{L^2} \leq \delta_1$, then 
\begin{equation}
    \left\Vert R(T;y_0,u) \right\Vert_{H^{-1}}^2 + \left\vert \left\langle R(T;y_0,u), \varphi_k \right\rangle \right\vert \lesssim T^{\frac{1}{4}} \Vert y_0 \Vert_{L^2} \Vert u_1 \Vert_{L^2}.
\end{equation}

Summarizing, we have proved that if $T \leq \frac{\min(\delta_0, \delta_1)}{\sqrt{2}}$ and $\Vert y_0 \Vert_{L^2} \leq \frac{\varepsilon_0}{2}$, then
\begin{equation}
    \begin{split}
         & \left\vert \left\langle y(T; y_0, u) - y(T; y_0, 0), \varphi_k \right\rangle - a_k^N \Vert u_N \Vert_{L^2}^2 \right\vert \\
        \lesssim \ & T \Vert u_N \Vert_{L^2}^2 +  \left\Vert  y(T; y_0, u) - y(T; y_0, 0) \right\Vert_{H^{-1}}^2  + \Vert u_1 \Vert_{L^2}^3 + T^{\frac{1}{4}} \Vert y_0 \Vert_{L^2} \Vert u_1 \Vert_{L^2}.
    \end{split}
\end{equation} 
By the Gagliardo--Nirenberg inequality (see \cite{Gagliardo1959,Nirenberg1959}), one has
\begin{equation}
    \begin{split}
        \Vert u_1 \Vert_{L^2}^3 = \left\Vert u_N^{(N-1)} \right\Vert_{L^2}^3
        & \lesssim \left\Vert u_N^{(3N-3)} \right\Vert_{L^2} \Vert u_N \Vert_{L^2}^2 + T^{-3(N-1)} \Vert u_N \Vert_{L^2}^3  \\
        & \lesssim \left( \left\Vert u^{(2N-3)} \right\Vert_{L^2} + T^{-2N+3} \Vert u \Vert_{L^2} \right) \Vert u_N \Vert_{L^2}^2,
    \end{split}
\end{equation}
and this completes the proof of Theorem \ref{thm:drift_any_order}. 

\subsection{Proof of obstructions}

Here we prove Corollary \ref{cor:obstruction_small_time}. Again, the symbol $\lesssim$ is used for constants that may depend on $\mu$, $N$, and $k$. Using Theorem \ref{thm:drift_any_order} and the estimate for the free Burgers equation proved in Lemma \ref{lem:free_evolution_Burgers}, one finds
\begin{equation}
    \begin{split}
        & \left\vert \left\langle y(T; y_0, u) - y_0, \varphi_k \right\rangle - a_k^N \Vert u_N \Vert_{L^2}^2 \right\vert \\
        \lesssim \, & \Gamma_N(T,u) \Vert u_N \Vert_{L^2}^2  
         + \Vert y(T; y_0, u) - y(T; y_0, 0) \Vert_{H^{-1}}^2 + T^{\frac{1}{4}} \Vert y_0 \Vert_{L^2} \Vert u_1 \Vert_{L^2} + T^{\frac{1}{2}} \Vert y_0 \Vert_{L^2} \\
        \lesssim \, & \Gamma_N(T,u) \Vert u_N \Vert_{L^2}^2 + \Vert y(T; y_0, u) \Vert_{H^{-1}}^2 + T^{\frac{1}{4}} \Vert y_0 \Vert_{L^2} + \Vert y_0 \Vert_{L^2}^2,
    \end{split}
\end{equation}
where we have also used $\Vert y_0 \Vert_{L^2} \lesssim 1$, $T \lesssim 1$ and $\Vert u \Vert_{L^2} \leq 1$. We assume that $a_k^N > 0$; the case $a_k^N < 0$ is similar. Since $\langle \psi, \varphi_k \rangle \lesssim \Vert \psi \Vert_{H^{-1}}$, one finds
\begin{equation}
    \begin{split}
        & a_k^N \Vert u_N \Vert_{L^2}^2 + \left\langle  y_0, \varphi_k \right\rangle \\
        \lesssim \ & \Gamma_N(T,u) \Vert u_N \Vert_{L^2}^2 + \Vert y(T; y_0, u) \Vert_{H^{-1}}^2 + \Vert y(T; y_0, u) \Vert_{H^{-1}} + T^{\frac{1}{4}} \Vert y_0 \Vert_{L^2} + \Vert y_0 \Vert_{L^2}^2.
    \end{split}
\end{equation}
Hence, if $y_0 = \eta \varphi_k$, then there exists $C > 0$ such that 
\begin{equation}
    \begin{split}
        & \left( a_k^N - C \Gamma_N(T,u) \right) \Vert u_N \Vert_{L^2}^2 + \left( 1 - C T^{\frac{1}{4}} - C \eta \right) \eta  \\
        \leq \ & C \left\Vert y(T; y_0, u) \right\Vert_{H^{-1}} + C \left\Vert y(T; y_0, u) \right\Vert_{H^{-1}}^2.
    \end{split}
\end{equation}
Assume that $\eta \leq \frac{1}{4C} =: \eta_1$. Choose $T_1 > 0$ such that $C T_1 \leq \frac{a_k^N}{4}$, and $C T_1^{\frac{1}{4}} \leq \frac{1}{4}$. Then 
\begin{equation}
    \begin{split}
        C \Gamma_N(T,u)
        & \leq \frac{a_k^N}{4} + C \left\Vert u^{(2N-3)} \right\Vert_{L^2} + C T^{-2N+3} \Vert u \Vert_{L^2} \\
        & \leq \frac{a_k^N}{4} + C \left( 1 + T^{-2N+3} \right) \left\Vert u \right\Vert_{H^{2N-3}} .
    \end{split}
\end{equation}
Hence, there exists $\delta_1 = \delta_1(T) > 0$ such that if $\left\Vert u \right\Vert_{H^{2N-3}} \leq \delta_1$, then $C \Gamma_N(T,u) \leq \frac{a_k^N}{2}$, and so
\begin{equation}
    \begin{split}
        &  \frac{a_k^N}{2} \Vert u_N \Vert_{L^2}^2 + \frac{\eta}{2}  
        \leq  C \left\Vert y(T; y_0, u) \right\Vert_{H^{-1}} + C \left\Vert y(T; y_0, u) \right\Vert_{H^{-1}}^2.
    \end{split}
\end{equation}
This completes the proof of Corollary \ref{cor:obstruction_small_time}.

\section{Construction of profiles satisfying the drift conditions}\label{sec:proof_existence_mu}

Here, we prove Proposition \ref{prop:existence_mu}. Recall that $a_k^n(\mu)$ is defined by \eqref{eq:def:a_k_n}. Let $k \geq 1$ and $N \geq 2$. 
    
\textbf{Step 1:} \textit{choice of the form of $\mu$.} We construct $\mu$ supported on finitely many frequencies, that is, $\mu = \sum_{j \in J} \mu_j \varphi_j$, with $J \subset \mathbb{N}^\ast$ and $\vert J \vert < + \infty$. Note that $a_k^n(\mu)$ is a linear combination of terms of the form $\left\langle \mu^{(2a)} \mu^{(2b)} , \varphi_k^\prime \right\rangle$. Let $a, b \geq 0$. One has
\begin{equation}
    \left\langle \mu^{(2a)} \mu^{(2b)} , \varphi_k^\prime \right\rangle
    = \frac{k \pi}{\sqrt{2}} \sum_{p,q \in J} \mu_p \mu_q \left(-\lambda_p \right)^a \left(-\lambda_q \right)^b \left( \mathbbm{1}_{\vert p - q \vert = k} - \mathbbm{1}_{p + q = k} \right).
\end{equation}
The factor $\mathbbm{1}_{\vert p - q \vert = k} - \mathbbm{1}_{p + q = k}$ motivates the choice of $J$ of the form $\left\{ m_1, m_1 + k, \ldots, m_M, m_M + k \right\}$, for some integers $m_1 < \cdots < m_M$, and $M \geq 2$. Assuming that $m_1 > k$, and that $m_{j+1} - (m_{j} + k) > k$ for all $j \in \llbracket 1, M-1 \rrbracket$, this choice yields
\begin{equation}
    \left\langle \mu^{(2a)} \mu^{(2b)} , \varphi_k^\prime \right\rangle
    = \frac{k \pi}{\sqrt{2}} \sum_{j=1}^M \mu_{m_j} \mu_{m_j + k} \left( \left(-\lambda_{m_j} \right)^a \left(-\lambda_{m_j + k} \right)^b + \left(-\lambda_{m_j+k} \right)^a \left(-\lambda_{m_j} \right)^b \right). 
\end{equation}
Note that $m_1 > k$ has been used to remove $\mathbbm{1}_{p + q = k}$, and that it implies $\langle \mu, \varphi_k \rangle = 0$. Only the products $\mu_{m_j}\mu_{m_j+k}$ appear in the preceding expression; we may therefore choose $\mu_{m_j+k}=1$ and write $\nu_j:=\mu_{m_j}$, for $j\in\llbracket 1, M\rrbracket$. To simplify the computations, we choose each $m_j$ to be a multiple of $k$, that is, $m_j = k n_j$, for some $n_j \in \mathbb{N}^\ast$. To ensure that the preceding conditions remain satisfied, the parameters $n_j$ must be chosen such that $n_1 > 1$ and $n_{j+1} - n_{j} > 2$ for all $j \in \llbracket 1, M-1 \rrbracket$. Since $\lambda_{m_j} = (k\pi)^2 n_j^2$ and $\lambda_{m_j+k}= (k\pi)^2 (n_j+1)^2$, we finally obtain
\begin{equation}
    \left\langle \mu^{(2a)} \mu^{(2b)} , \varphi_k^\prime \right\rangle
    = \frac{\left( k \pi \right)^{2(a+b)+1}}{\sqrt{2}} (-1)^{a+b} \sum_{j=1}^M \nu_j P_{a,b}(n_j),
\end{equation}
where $P_{a,b}(X) := X^{2a} (X+1)^{2b} + (X+1)^{2a} X^{2b}$, with $\mu$ of the form $\sum_{j = 1}^M \left( \nu_j \varphi_{k n_j} + \varphi_{k n_j + k} \right)$.

\textbf{Step 2:} \textit{choice of the parameters.} 
A straightforward calculation gives $a_k^n(\mu) = \sum_{j = 1}^M \nu_j Q_n(n_j)$,  with 
\begin{equation}
    Q_n(X) := \frac{(k\pi)^{4n-3}}{2 \sqrt{2}}  \sum_{\alpha = 0}^{n - 1} \sum_{\beta = 0}^{n-1-\alpha} (-1)^{\alpha} \Lucas{n - 1 - \beta}{\alpha} P_{\beta, n-1+\alpha}(X).
\end{equation}
We choose $M = N$. We claim that the parameters $n_1, \cdots, n_N$ can be chosen so that the matrix $\mathcal{M} := \left( Q_n(n_j) \right)_{1 \leq n,j \leq N}$ is invertible. Admitting that this is true, we can simply choose 
\begin{equation}
    \left( \nu_1, \ldots, \nu_N \right)^{\intercal} := \mathcal{M}^{-1} \left( 0, \ldots, 0, 1 \right)^{\intercal},
\end{equation}
to obtain $a_k^1(\mu) = \cdots = a_k^{N-1}(\mu) = 0$ and $a_k^N(\mu) = 1$.

It remains to prove the claim. For $n \geq 1$, the polynomial $Q_n$ is of degree at most $4n-4$, and the coefficient in front of $X^{4n - 4}$ is
\begin{equation}
    \frac{(k\pi)^{4n-3}}{2 \sqrt{2}} \sum_{\alpha = 0}^{n - 1} (-1)^{\alpha} \Lucas{\alpha}{\alpha} \times 2 = \frac{(k\pi)^{4n-3}}{\sqrt{2}} \left( 1 + 2 \sum_{\alpha = 1}^{n-1} (-1)^\alpha \right) =  \frac{(k\pi)^{4n-3} (-1)^{n-1}}{\sqrt{2}},
\end{equation}
implying that $Q_n$ is precisely of degree $4 n - 4$. In particular, $(Q_1, \ldots, Q_N)$ are linearly independent. To ensure that $n_{j+1} - n_{j} > 2$, we simply choose the integers $n_j$ as multiples of $3$. Set 
\begin{equation}
    E := \mathrm{span} \left\{ \left( Q_1(n), \ldots, Q_N(n) \right), \  n \in 3 \mathbb{N}^\ast \right\} \subset \mathbb{R}^N.
\end{equation}
Assume by contradiction that $E \neq \mathbb{R}^N$. Then, there exists a nonzero vector $(a_1,\ldots,a_N) \in \mathbb{R}^N$ in the orthogonal complement of $E$ with respect to the standard scalar product on $\mathbb{R}^N$, yielding $\sum_{j = 1}^N a_j Q_j(n) = 0$ for all $n \in 3 \mathbb{N}^\ast$. Thus, the polynomial $\sum_{j=1}^N a_j Q_j$ has infinitely many roots and must therefore vanish identically, which contradicts the linear independence of $Q_1,\ldots,Q_N$. Hence, $E=\mathbb{R}^N$. In particular, there exist $n_1,\ldots,n_N \in 3\mathbb{N}^\ast$, with $n_1<\cdots<n_N$, such that the matrix $\left(Q_n(n_j)\right)_{1 \leq n,j \leq N}$ is invertible.

\appendix

\section{Regularity results for heat and Burgers equations}\label{sec:appendix_heat_burgers}

First, we recall, without proof, the following classical result. The fact that the continuity constant is independent of $T$ can be verified, for instance, by a straightforward inspection of \cite[Lemma A.1]{Nguyen2025Burgers}. 

\begin{lemma}\label{lem:well-posedness_heat_L1L2}
    If $f \in L^1((0, T); L^2(0, 1))$ and $y_0 \in L^2(0, 1)$, then there exists a unique solution $y$ of 
    \begin{equation}\label{eq:lem:heat_weak_wellposedness}
        \left\{
        \begin{array}{cll}
            \partial_t y - \partial_x^2 y = f
            & \quad t \in (0, T), & \quad x \in (0,1), \\
            y(t,0) = y(t,1) = 0
            & & \quad t \in (0, T), \\
            y(0,x)=y_0
            & & \quad x \in (0, 1).
        \end{array}
        \right.
    \end{equation}
     satisfying $y \in C^0([0, T]; L^2(0,1)) \cap L^2((0, T); H_0^1(0,1))$. In addition, there exists $C > 0$ independent of $T$, such that
    \begin{equation}\label{eq:lem:well-posedness_1}
        \Vert y \Vert_{L^\infty((0, T);L^2)} + \Vert y \Vert_{L^2((0, T);H_0^1)} \leq C \left( \Vert y_0 \Vert_{L^2} + \Vert f \Vert_{L^1((0, T); L^2)} \right).
    \end{equation}
\end{lemma}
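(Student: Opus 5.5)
The statement is Lemma \ref{lem:well-posedness_heat_L1L2}, the classical well-posedness of the heat equation with $L^1(L^2)$ source and $L^2$ data, with a constant independent of $T$. I would work entirely with the spectral decomposition in the basis $(\varphi_j)$. Writing $y_j(t) = \langle y(t), \varphi_j \rangle$, $f_j(t) = \langle f(t), \varphi_j\rangle$, the Duhamel formula gives
\begin{equation*}
    y_j(t) = e^{-\lambda_j t} \langle y_0, \varphi_j \rangle + \int_0^t e^{-\lambda_j (t-\tau)} f_j(\tau) \dd \tau ,
\end{equation*}
and I would take this as the definition of the (mild) solution. Uniqueness follows since any solution in $C^0([0,T];L^2) \cap L^2((0,T);H_0^1)$ has coefficients solving the scalar ODE $y_j' + \lambda_j y_j = f_j$ in the distributional sense, whose solution is unique.

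For the $L^\infty(L^2)$ bound, I would write $y(t) = S(t) y_0 + \int_0^t S(t-\tau) f(\tau) \dd \tau$ and use that $S$ is a contraction semigroup on $L^2$, giving $\Vert y(t)\Vert_{L^2} \leq \Vert y_0 \Vert_{L^2} + \Vert f \Vert_{L^1(L^2)}$ directly, with constant $1$. Continuity in time follows from strong continuity of $S$ and dominated convergence; for the approximation step, density of smooth $f$ in $L^1(L^2)$ combined with the a priori bound suffices.

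For the $L^2(H_0^1)$ bound, the cleanest route is the energy identity: for smooth data (then by density), multiplying by $y$ and integrating gives
\begin{equation*}
    \frac{1}{2}\Vert y(t) \Vert_{L^2}^2 + \int_0^t \Vert \partial_x y \Vert_{L^2}^2 \dd s = \frac{1}{2}\Vert y_0\Vert_{L^2}^2 + \int_0^t \langle f, y \rangle \dd s \leq \frac{1}{2}\Vert y_0\Vert_{L^2}^2 + \Vert y \Vert_{L^\infty(L^2)} \Vert f \Vert_{L^1(L^2)} .
\end{equation*}
Inserting the $L^\infty(L^2)$ bound from the previous step and using Young's inequality yields $\Vert \partial_x y\Vert_{L^2(L^2)}^2 \leq C(\Vert y_0\Vert_{L^2} + \Vert f\Vert_{L^1(L^2)})^2$. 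Since Poincaré on $(0,1)$ controls the full $H_0^1$ norm by $\Vert \partial_x y \Vert_{L^2}$, and nothing here depends on $T$, the constant is $T$-independent.

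The only delicate point is justifying the energy identity at this low regularity; I would handle it by proving everything for finite Galerkin truncations $\sum_{j \leq J}$, where all computations are legitimate ODE manipulations, and passing to the limit $J \to \infty$ using the uniform bounds and Cauchy-ness in $C^0(L^2)\cap L^2(H_0^1)$ (the bounds applied to differences of truncations).
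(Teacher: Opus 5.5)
The paper gives no proof of this lemma. It is recalled as classical, and for the $T$-independence of the constant the reader is referred to an inspection of \cite[Lemma A.1]{Nguyen2025Burgers}. Your argument is a correct, self-contained replacement.

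The Galerkin step does all the work. Let $P_J$ be the orthogonal projection onto $\mathrm{span}(\varphi_1,\ldots,\varphi_J)$. Each truncated coefficient is absolutely continuous because $f_j \in L^1(0,T)$, so the energy identity for $P_J y$ is an exact finite sum of scalar identities. Apply the resulting bounds to $P_{J'}y - P_J y$, and note that $\Vert f - P_J f\Vert_{L^1((0,T);L^2)} \to 0$ by dominated convergence. This gives convergence in $C^0([0,T];L^2)\cap L^2((0,T);H_0^1)$, which already yields continuity in time. Your separate semigroup/density argument is therefore redundant. So is Young's inequality, since $\tfrac12\Vert y_0\Vert_{L^2}^2 + (\Vert y_0\Vert_{L^2} + \Vert f\Vert_{L^1(L^2)})\Vert f\Vert_{L^1(L^2)} \le (\Vert y_0\Vert_{L^2}+\Vert f\Vert_{L^1(L^2)})^2$ directly.

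Compared with the citation, your route makes the $T$-independence transparent rather than something to check in another paper:
the $L^\infty(L^2)$ bound has constant $1$ because $S(t)$ is a contraction;
the energy estimate closes without any Gronwall step, so no $e^{CT}$ factor can appear;
and the $L^2$ part of the $H_0^1$ norm comes from Poincar\'e on $(0,1)$ rather than from $\Vert y\Vert_{L^2(L^2)} \le \sqrt{T}\,\Vert y\Vert_{L^\infty(L^2)}$, which would reintroduce $T$.
The paper's choice gains only brevity.
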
 

Second, the following result is used repeatedly to prove remainder estimates. The proof is elementary and we omit it (see, for instance, \cite{BurgersFractional}).

\begin{lemma}[Forced Burgers equations]\label{lem:forced_burgers}
    Let $T > 0$, and $k \geq 1$. Let $f \in L^1((0,T); H^1(0,1))$ and $g \in L^2((0,T); H^1(0,1))$. There exist $C, \delta > 0$, which may depend only on $k$, such that if $\Vert f \Vert_{L^1((0,T);H^1)} \leq \delta$ and $\Vert g \Vert_{L^2((0,T);H^1)} \leq \delta$, then the solution $y$ of 
    \begin{equation}\label{eq:forced_Burgers}
        \left\{
        \begin{array}{lll}
            \partial_t y - \partial_x^2 y + y \partial_x y + \partial_x (g y) = \partial_x f
            & \quad t \in (0, T), & \quad x \in (0,1), \\
            y(t,x) = 0
            & \quad t \in (0, T),  & \quad x \in \{0, 1\},\\
            y(0,x)=0
            & & \quad x \in (0, 1),
        \end{array}
        \right.
    \end{equation}
    satisfies
    \begin{equation}\label{eq:lem:forced_burgers_1}
        \left\Vert y \right\Vert_{L^2((0,T);H^1)} \leq C \Vert f \Vert_{L^1(H^1)},
    \end{equation}
    \begin{equation}\label{eq:lem:forced_burgers_2}
        \left\Vert y \right\Vert_{L^2((0,T);L^2)} + \left\Vert y(T) \right\Vert_{H^{-1}} \leq C \left\Vert f \right\Vert_{L^1((0,T);L^2)},
    \end{equation}
    and 
    \begin{equation}\label{eq:lem:forced_burgers_3}
        \begin{split}
            \left\vert \left\langle y(T), \varphi_k \right\rangle \right\vert
            \leq \ & C \left\Vert f \right\Vert_{L^1((0,T);L^1)} + C \left( \Vert g \Vert_{L^2((0,T);L^2)} + \left\Vert f \right\Vert_{L^1((0,T);L^2)} \right) \left\Vert f \right\Vert_{L^1((0,T);L^2)}.
        \end{split}
    \end{equation}
\end{lemma}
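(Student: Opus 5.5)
The statement is the forced Burgers Lemma, stated last. I will prove it with energy estimates for the nonlinear equation, a smallness/continuity argument, and duality for the two weak-norm bounds.

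\textbf{Step 1: the $L^2(H^1)$ bound.} Test the equation against $y$. The transport term vanishes, since $\int_0^1 y^2\partial_x y\,dx = 0$ by the Dirichlet conditions. The coupling term gives $\int \partial_x(gy)y = -\int g y\partial_x y$, which is bounded by $\Vert g\Vert_{L^\infty}\Vert y\Vert_{L^2}\Vert y\Vert_{H^1}$. The forcing gives $-\int f\partial_x y \le \Vert f\Vert_{L^2}\Vert y\Vert_{H^1}$. Absorbing with Young's inequality yields
\begin{equation*}
\tfrac{d}{dt}\Vert y\Vert_{L^2}^2+\Vert y\Vert_{H^1}^2\lesssim \Vert g\Vert_{H^1}^2\Vert y\Vert_{L^2}^2+\Vert f\Vert_{L^2}\Vert y\Vert_{H^1}.
\end{equation*}
Gronwall's lemma with $\Vert g\Vert_{L^2(H^1)}\le\delta$ gives
\begin{equation*}
\Vert y\Vert_{L^\infty(L^2)}^2+\Vert y\Vert_{L^2(H^1)}^2\lesssim \Vert f\Vert_{L^1(L^2)}\Vert y\Vert_{L^\infty(H^1)}.
\end{equation*}
This is not yet closed, because of the $L^\infty(H^1)$ norm on the right. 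I will therefore also test against $-\partial_x^2 y$, an $H^1$-energy estimate, with the forcing written as $\partial_x f\in L^1(L^2)$ when $f\in L^1(H^1)$. By Lemma \ref{lem:well-posedness_heat_L1L2} applied to the derivative (or to the equation seen as a heat equation with source $\partial_x f-\partial_x(gy)-y\partial_x y$), this gives $y\in L^\infty(H^1)\cap L^2(H^2)$. The cubic term $y\partial_x y$ is treated perturbatively: it is quadratic in $y$, so a bootstrap/continuity argument on $[0,T]$ closes the estimate provided $\Vert f\Vert_{L^1(H^1)}\le\delta$. This gives \eqref{eq:lem:forced_burgers_1}, and the constants are independent of $T$ because all the heat estimates are.

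\textbf{Step 2: the weaker norms \eqref{eq:lem:forced_burgers_2}.} I will work with $z=A^{-1/2}y$, or equivalently test against $A^{-1}y$, which is the standard $H^{-1}$ energy estimate. Then $\int \partial_x f\,A^{-1}y = -\int f\,\partial_x A^{-1}y$ is bounded by $\Vert f\Vert_{L^2}\Vert y\Vert_{H^{-1}}$. The nonlinear terms $\partial_x(y^2/2+gy)$ paired with $A^{-1}y$ are bounded by $(\Vert y\Vert_{L^\infty}+\Vert g\Vert_{L^\infty})\Vert y\Vert_{L^2}\Vert y\Vert_{H^{-1}}$. The smallness of $\Vert y\Vert_{L^2(H^1)}$ from Step 1, together with Gronwall, closes the estimate and gives
\begin{equation*}
\Vert y\Vert_{L^\infty(H^{-1})}+\Vert y\Vert_{L^2(L^2)}\lesssim\Vert f\Vert_{L^1(L^2)}.
\end{equation*}

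\textbf{Step 3: the projection \eqref{eq:lem:forced_burgers_3}.} Use Duhamel's formula:
\begin{equation*}
\langle y(T),\varphi_k\rangle=-\int_0^T e^{-\lambda_k(T-t)}\left\langle f-\tfrac{y^2}{2}-gy,\varphi_k'\right\rangle dt.
\end{equation*}
The $f$ term is bounded by $\Vert\varphi_k'\Vert_{L^\infty}\Vert f\Vert_{L^1(L^1)}$. The terms $\langle y^2+gy,\varphi_k'\rangle$ are bounded by $\Vert y\Vert_{L^2(L^2)}(\Vert y\Vert_{L^2(L^2)}+\Vert g\Vert_{L^2(L^2)})$, and Step 2 then yields the stated product bound.

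\textbf{Main obstacle.} The hard part is closing Step 1 with only $f\in L^1(H^1)$, that is, with no $L^2$-in-time forcing. This requires the $L^\infty(L^2)$-type heat estimate of Lemma \ref{lem:well-posedness_heat_L1L2} applied at the $H^1$ level, combined with a careful bootstrap on the quadratic term $y\partial_x y$.
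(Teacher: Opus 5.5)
The paper does not prove this lemma; it calls the proof elementary and defers it to the companion paper. So your argument has to stand on its own.

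\textbf{Steps 2 and 3 are correct.}
\begin{itemize}
\item For Step 2, test against $A^{-1}y$ and use $\|\partial_x A^{-1}y\|_{L^2}=\|y\|_{H^{-1}}$. Apply Young's inequality, then Gronwall with weight $\|y\|_{H^1}^2+\|g\|_{H^1}^2$. This gives \eqref{eq:lem:forced_burgers_2} as soon as $\|y\|_{L^2(H^1)}\lesssim 1$.
\item For Step 3, Duhamel's formula, $\|\varphi_k'\|_{L^\infty}=\sqrt2\,k\pi$ and Cauchy--Schwarz in time give \eqref{eq:lem:forced_burgers_3}.
\end{itemize}

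\textbf{Step 1 has a genuine gap.} You integrate the forcing by parts onto $y$ and reach the unclosed bound $\|f\|_{L^1(L^2)}\|y\|_{L^\infty(H^1)}$. You then claim that an $H^1$-energy estimate, or Lemma \ref{lem:well-posedness_heat_L1L2}, gives $y\in L^\infty(H^1)\cap L^2(H^2)$ with a bound by $\|f\|_{L^1(H^1)}$. This is false.
\begin{itemize}
\item The source $\partial_x f$ is only in $L^1(L^2)$. With such a source, Lemma \ref{lem:well-posedness_heat_L1L2} gives $L^\infty(L^2)\cap L^2(H^1_0)$, one derivative less than you claim.
\item In the $H^1$-energy estimate, the term $\langle \partial_x f,\partial_x^2 y\rangle$ can be absorbed by the dissipation only if $\partial_x f\in L^2(L^2)$. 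Moving one more derivative onto $f$ would need $f\in L^1(H^2)$.
\item A counterexample already exists for the linear heat equation. Take $f=\varepsilon^{-1}\mathbbm{1}_{(0,\varepsilon)}(t)\sin(\pi x)$. Then $\|f\|_{L^1(H^1)}$ does not depend on $\varepsilon$. But $y(\varepsilon)\to\pi\cos(\pi\,\cdot)$ in $L^2$, and this limit is not in $H^1_0$, so $\|y(\varepsilon)\|_{H^1}\to\infty$.
\end{itemize}
So the quantity you need to close Step 1 is not controlled. The bootstrap you describe as the ``main obstacle'' rests on this false bound.

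\textbf{The fix is simpler than your attempt.} In the $L^2$ energy identity, do not integrate the forcing by parts. Bound it directly: $\langle\partial_x f,y\rangle\le\|f\|_{H^1}\|y\|_{L^2}$. Since $\int_0^1 y^2\partial_x y\,dx=0$, and you handle the $g$-term as before, Gronwall gives
\begin{equation*}
\|y\|_{L^\infty(L^2)}^2+\|y\|_{L^2(H^1)}^2\le C e^{C\delta^2}\|f\|_{L^1(H^1)}\|y\|_{L^\infty(L^2)} .
\end{equation*}
This is \eqref{eq:lem:forced_burgers_1}, with no bootstrap and no smallness of $f$.

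Your parenthetical idea also works if you stay at the correct regularity level:
\begin{itemize}
\item Apply Lemma \ref{lem:well-posedness_heat_L1L2} with source $\partial_x f-\partial_x(gy)-y\partial_x y\in L^1(L^2)$.
\item Use $\|\partial_x(gy)\|_{L^1(L^2)}\lesssim\|g\|_{L^2(H^1)}\|y\|_{L^2(H^1)}$, which is absorbed when $\delta$ is small.
\item Use $\|y\partial_x y\|_{L^1(L^2)}\lesssim\|y\|_{L^2(H^1)}^2$, which closes by a continuity argument in $t$ when $\|f\|_{L^1(H^1)}\le\delta$.
\end{itemize}
This yields \eqref{eq:lem:forced_burgers_1} directly and never needs $L^\infty(H^1)$. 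With either fix, your Steps 2 and 3 complete the proof.
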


Finally, we need the following estimate on the free evolution of a Burgers equation with small initial data. Again, the proof is elementary and we omit it (see, for instance, \cite{BurgersFractional}).

\begin{lemma}\label{lem:free_evolution_Burgers}
    There exists $C > 0$ such that for all $T \in (0, 1]$ and $y_0 \in L^2(0, 1)$, with $\Vert y_0 \Vert_{L^2} \leq 1$, one has 
    \begin{equation}\label{eq:lem:free_evolution_Burgers_2}
        \left\Vert y(T; y_0, 0) - y_0 \right\Vert_{H^{-1}} \leq C T^{\frac{1}{2}} \Vert y_0 \Vert_{L^2}.
    \end{equation}
\end{lemma}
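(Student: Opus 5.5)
The statement to prove is Lemma \ref{lem:free_evolution_Burgers}: for $\Vert y_0 \Vert_{L^2} \leq 1$ and $T \in (0,1]$, one has $\Vert y(T;y_0,0) - y_0 \Vert_{H^{-1}} \lesssim T^{1/2} \Vert y_0 \Vert_{L^2}$. I would split the free solution $y := y(t; y_0, 0)$ into its linear part and a nonlinear correction. The linear part is $z(t) := S(t) y_0$, and the correction is $w := y - z$, which solves $\partial_t w - \partial_x^2 w = -\frac12 \partial_x (y^2)$ with $w(0)=0$. It then suffices to bound $\Vert S(T)y_0 - y_0 \Vert_{H^{-1}}$ and $\Vert w(T) \Vert_{H^{-1}}$ separately.

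For the linear part I would work in Fourier. Since
\begin{equation}
\Vert S(T) y_0 - y_0 \Vert_{H^{-1}}^2 = \sum_j \lambda_j^{-1} (1-e^{-\lambda_j T})^2 \langle y_0, \varphi_j \rangle^2 ,
\end{equation}
it is enough to check that $\lambda^{-1}(1-e^{-\lambda T})^2 \leq T$. This follows from $(1-e^{-x})^2 \leq 1-e^{-x} \leq x$ with $x = \lambda T$, and gives the bound $T^{1/2}\Vert y_0\Vert_{L^2}$.

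For the nonlinear part, I would first establish the standard energy estimate for Burgers. Multiplying the equation by $y$, the term $\int y^2 y_x$ vanishes under Dirichlet conditions, so
\begin{equation}
\Vert y \Vert_{L^\infty(L^2)}^2 + 2\Vert y_x \Vert_{L^2(L^2)}^2 \leq \Vert y_0 \Vert_{L^2}^2 .
\end{equation}
Next, I would test the $w$ equation against $\varphi_j/\lambda_j$, or equivalently estimate in $H^{-1}$ through $A^{-1/2}$, using that $\partial_x$ maps $L^1$ into $H^{-2}$. A cleaner route is the explicit formula
\begin{equation}
\langle w(T), \varphi_j \rangle = \tfrac12 \int_0^T e^{-\lambda_j(T-\tau)} \langle y^2, \varphi_j' \rangle \dd\tau .
\end{equation}
Since $|\langle y^2,\varphi_j'\rangle| \leq \sqrt2 j\pi \Vert y\Vert_{L^2}^2$, I get $|\langle w(T),\varphi_j\rangle| \lesssim j \Vert y_0\Vert_{L^2}^2 (1-e^{-\lambda_j T})/\lambda_j$. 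Then
\begin{equation}
\Vert w(T)\Vert_{H^{-1}}^2 \lesssim \Vert y_0\Vert^4 \sum_j \lambda_j^{-1}\, j^2 \lambda_j^{-2}(1-e^{-\lambda_j T})^2 \sim \Vert y_0\Vert^4 \sum_j j^{-4} \min(1, j^4T^2),
\end{equation}
and this sum is of order $T^{2}\cdot T^{-1/2} = T^{3/2}$ at most. Using $\Vert y_0 \Vert \leq 1$, this gives $\Vert w(T)\Vert_{H^{-1}} \lesssim T^{3/4}\Vert y_0\Vert \leq T^{1/2}\Vert y_0\Vert$.

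The main obstacle is keeping the nonlinear term linear in $\Vert y_0\Vert$ with a $T^{1/2}$ gain. Here this is harmless: the quadratic bound $\Vert y_0\Vert^2$ is absorbed through $\Vert y_0\Vert\le 1$, and the time gain comes from the dyadic splitting $j \lessgtr T^{-1/2}$ in the sum. I expect the only delicate point to be justifying the Duhamel formula for the weak $L^2$ solution, which follows from the well-posedness class $y\in C^0(L^2)\cap L^2(H^1_0)$ together with Lemma \ref{lem:well-posedness_heat_L1L2}.
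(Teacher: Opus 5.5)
Your proof is correct. The individual steps all hold:
\begin{itemize}
\item The linear bound follows from $(1-e^{-x})^2\le x$.
\item The energy inequality $\Vert y(t)\Vert_{L^2}\le \Vert y_0\Vert_{L^2}$ is rigorous in the class $L^2((0,T);H_0^1)\cap H^1((0,T);H^{-1})$, because in one dimension $\int_0^1 y^2\partial_x y=0$ and $y\partial_x y\in L^2((0,T);L^1)\subset L^2((0,T);H^{-1})$.
\item The Duhamel formula is justified by Lemma \ref{lem:well-posedness_heat_L1L2}, since $\Vert y\partial_x y\Vert_{L^2}\le \Vert \partial_x y\Vert_{L^2}^2$ gives $y\partial_x y\in L^1((0,T);L^2)$.
\item The sum $\sum_j j^{-4}\min(1,j^4T^2)\lesssim T^{3/2}$ is right.
\end{itemize}

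The paper does not prove this lemma. It calls the proof elementary and refers to the companion paper, so there is no argument here to compare yours against.

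Two simplifications are worth noting.
\begin{itemize}
\item The dyadic splitting is unnecessary. Using $(1-e^{-\lambda_jT})^2\le \lambda_j T$ in the nonlinear sum gives directly $\Vert w(T)\Vert_{H^{-1}}^2\lesssim T\,\Vert y_0\Vert_{L^2}^4\sum_j j^{-2}$, which is already sufficient.
\item You can avoid splitting $y$ at all by integrating the equation in time in $H^{-1}$. Write $\partial_t y=\partial_x\bigl(\partial_x y-\tfrac12 y^2\bigr)$. Bessel's inequality for the orthonormal family $\sqrt2\cos(j\pi x)$ gives $\Vert\partial_x g\Vert_{H^{-1}}\le\Vert g\Vert_{L^2}$, and one also has $\Vert y\Vert_{L^\infty}\le\Vert\partial_x y\Vert_{L^2}$ on $H_0^1(0,1)$. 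Hence
\[
\Vert y(T)-y_0\Vert_{H^{-1}}\le\int_0^T\Bigl(\Vert\partial_x y\Vert_{L^2}+\tfrac12\Vert y\Vert_{L^\infty}\Vert y\Vert_{L^2}\Bigr)\dd t\le\Bigl(1+\tfrac12\Vert y_0\Vert_{L^2}\Bigr)\sqrt{T}\,\Vert\partial_x y\Vert_{L^2((0,T);L^2)},
\]
and the energy estimate $\Vert\partial_x y\Vert_{L^2((0,T);L^2)}\le\Vert y_0\Vert_{L^2}/\sqrt2$ finishes the proof in three lines.
\end{itemize}

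Your Fourier route costs more computation but gives more information. It shows that the nonlinear correction is $O(T^{3/4}\Vert y_0\Vert_{L^2}^2)$, so the $T^{1/2}$ rate comes entirely from the linear heat part. The direct route treats both parts at once and only yields $T^{1/2}$ for each.
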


\section*{Acknowledgments}

The author acknowledges support from the Fondation Simone et Cino Del Duca - Institut de France, the Fondation Université de Rennes and from grant ANR-11-LABX-0020 (Labex Lebesgue). The author is deeply grateful to Karine Beauchard and Frédéric Marbach for introducing him to the subject and for their constant support throughout this work.

\bibliographystyle{plain}
\bibliography{biblio}

\noindent
\textsc{Thomas Perrin:} \texttt{thomas.perrin@ens-rennes.fr}

\end{document}